\newcommand{\pushright}[1]{\ifmeasuring@#1\else\omit\hfill$\displaystyle#1$\fi\ignorespaces}
\newcommand{\pushleft}[1]{\ifmeasuring@#1\else\omit$\displaystyle#1$\hfill\fi\ignorespaces}
\renewcommand{\norm}[1]{\|#1\|}
\newcommand{\Bnorm}[1]{\Big\|#1 \Big\|}
\newcommand{\Z}{\mathbb{Z}}
\newcommand{\R}{\mathbb{R}}
\newcommand{\1}{\mathds{1}}
\newcommand{\cC}{\ensuremath{\mathcal C}} 
\newcommand{\cD}{\ensuremath{\mathcal D}}
\newcommand{\cL}{\ensuremath{\mathcal L}}
\newcommand{\cW}{\ensuremath{\mathcal W}}
\newcommand{\calE}{\mathcal{E}}
\newcommand{\bbN}{\mathbb{N}}
\newcommand{\bbR}{\mathbb{R}}
\newcommand{\bbS}{\mathbb{S}}
\newcommand{\bbT}{\mathbb{T}}
\newcommand{\bbZ}{\mathbb{Z}}
\def\({\left(}
\def\){\right)}
\def\gap{\mathop{\rm gap}\nolimits}
\newcommand{\entropy}{\Delta \operatorname{Ent}}
\newcommand{\free}{\mathrm{f}}
\newcommand{\periodic}{\mathrm{per}}
\theoremstyle{plain}
\newtheorem{theorem}{Theorem}[section]
\newtheorem{lemma}[theorem]{Lemma}
\newtheorem{proposition}[theorem]{Proposition}
\newtheorem{conjecture}[theorem]{Conjecture}
\newtheorem{remark}{Remark}[section]
\newtheorem{claim}{Claim}
\newtheorem{fact}{Fact}
\theoremstyle{definition}
\newtheorem{definition}{Definition}[section]
\newtheorem{obs}{Observation}
\newenvironment{Obs}{\begin{obs}}{ \end{obs}}
\author{Pietro Caputo}
\address{Universit\`a Roma Tre}
\email{pietro.caputo@uniroma3.it}
\author{S\'{e}bastien Ott}
\address{EPFL}
\email{ott.sebast@gmail.com}
\author{Assaf Shapira}
\address{MAP5, Université Paris Cité}
\email{assaf.shapira@math.cnrs.fr}
\date{\today}
\title{Relaxation time and topology in 1D $O(N)$ models}
\begin{document}

	\maketitle
	\begin{abstract}
We discuss the relaxation time (inverse spectral gap) of the one dimensional $O(N)$ model, for all $N$ and with two types of boundary conditions. We see how its low temperature asymptotic behavior is affected by the topology.
The combination of the space dimension, which here is always $1$, the boundary condition (free or periodic), and the spin state $\bbS^{N-1}$, determines the existence or absence of non-trivial homotopy classes in some discrete version. Such non-trivial topology reflects in bottlenecks of the dynamics, creating metastable states that the system exits at exponential times; while when only one homotopy class exists the relaxation time depends polynomially on the temperature. We prove in the one dimensional case that, indeed, the relaxation time is a proxy to the model's topological properties via the exponential/polynomial dependence on the temperature.
\end{abstract}	

	\section{Introduction}
	The investigation of the low-temperature behavior of classical spin systems with continuous symmetry, such as the $O(N)$ model on a lattice, is a source of many fascinating questions in equilibrium statistical mechanics \cite{friedli_velenik_2017,peled2019lectures}. For example, in the two dimensional $XY$ model, a deep understanding of the interplay between the spin wave approximation and topological aspects such as vortex formation poses significant mathematical challenges, see  \cite{dyson1956general,kosterlitz1973ordering,mermin1979topological,frohlich1981kosterlitz,frohlich1983spin} for some classical works, and see e.g.\ \cite{newman2018gaussian,garban2023statistical} for more recent studies. To delve deeper into these questions, 
	it is natural to study the Langevin dynamics associated to the $O(N)$ model, that is the reversible diffusion process with stationary distribution given by the $O(N)$ Gibbs measure. In the mean-field case,  a comprehensive analysis of the relaxation time, or inverse spectral gap, of Langevin dynamics for the $O(N)$ model,  has been achieved recently in \cite{becker2020spectral}. In particular, these results show that, when $N\geq 2$, in sharp contrast with the case of the Glauber dynamics for the Ising model ($N=1$), the relaxation time of the mean field  $O(N)$ model is at most linearly growing with the  size of the system, at any fixed temperature.   
	It is widely believed that such bounds should continue to hold for short range models as well, see e.g.\ \cite{bray1988dynamics}. In particular, it can be conjectured that for all lattice dimension $d\geq 2$, the $O(N)$ model on a lattice box with side $L$, for all $N\geq 2$, has relaxation time at most of order $L^d$ at any fixed temperature, regardless of the boundary conditions. However, even establishing that relaxation times grow at most at a polynomial rate with $L$ is a notoriously difficult open problem.

	As a much more modest objective, in this note, we explore the presence of significant topological effects in the simpler one-dimensional setting. In a one-dimensional system, it is well known that the relaxation time is of order $1$ at any fixed temperature. However, it was recently observed in \cite{cosco2021topologically} that in the one-dimensional XY model ($N=2$),  when periodic boundary conditions are imposed, as the inverse temperature $\beta$ grows logarithmically with the size $L$ of the chain, topologically induced metastable phases emerge, which correspond to distinct global winding numbers of the spin chain. When $\beta$ increases as $\log L$, the free energy barrier between these states also increases linearly with $\beta$, leading to  hitting times that are exponentially large in $\beta$. As we will see, this system exhibits relaxation times that grow exponentially with $\beta$. This phenomenon is specific of the periodic chain, and cannot occur for e.g.\ free boundary conditions. Indeed, it is a consequence of the fact that the global winding number of a periodic XY chain is a topological invariant, and two distinct phases cannot be connected by a homotopy, giving rise to a topological bottleneck. The main goal of this paper is to show that these topological effects on the dynamics are not present when $N\ge 3$. The point is that when $N\geq 3$, one can connect any two configurations of the spin chain by a continuous deformation. In particular, by using a continuous version of the canonical path method,  we will show that, as a function of $\beta$, the relaxation time of the one-dimensional  $O(N)$ model with $N\geq 3$  can grow at most polynomially in a periodic chain. We will also note that the same holds true for $O(N)$ models, this time for any $N\geq 2$, if one takes free boundary conditions instead.

	\subsection{Model and results}
	Given integers 
	$N\geq 2$, and $L\geq 2$, the one-dimensional $O(N)$ model of size $L$, with 
	free or periodic boundary conditions is defined, respectively,  by the Hamiltonians 
	\begin{gather}\label{eq:ham}
		H^{\free}_L(S) = -\sum_{i=1}^{L-1} S_i\cdot S_{i+1},\qquad 	H^{\periodic}_L(S) = -\sum_{i=1}^{L} S_i\cdot S_{i+1}.
	\end{gather}
	Here $S_i\in\bbS^{N-1}$ denotes the $i$-th spin,  $S_i\cdot S_{i+1}$ is the usual scalar product for vectors in $\R^N$, and we set $S_{L+1}\equiv S_1$ to obtain periodic boundary conditions in $H^{\periodic}_L(S)$. We write $\nu$ for 
	Lebesgue's
	measure on $\bbS^{N-1}$ and let $\nu_L $ denote the corresponding product measure on the space $\Omega_L = (\bbS^{N-1})^L$ of the spin chain configurations. Thus, the free and periodic boundary condition $O(N)$ Gibbs measure at inverse temperature $\beta>0$ is given, respectively,  by the probability measures on $\Omega_L$ defined as
	\begin{gather}\label{eq:gibbs}
		\mu_{L,\beta}^{\free}(dS) = \frac{\exp\left(-\beta H^\free_L(S)\right)}{Z^{\free}_{L,\beta}}\,\nu_L(dS),\qquad 
		\mu_{L,\beta}^{\periodic}(dS) = \frac{\exp\left(-\beta H^\periodic_L(S)\right)}{Z^{\periodic}_{L,\beta}}\,\nu_L(dS),
	\end{gather}
	where the partition function is defined, respectively, by
	\[
	Z^{\free,\periodic}_{L,\beta} = \int_{\Omega_L}\exp\left(-\beta H^{\free,\periodic}_L(S)\right)\nu_L(dS).
	\]
	The Langevin dynamics is defined as the reversible diffusion process on $\Omega_L$ with infinitesimal generator
	\begin{align}\label{eq:gen}
		\cL^{\free,\periodic}_{L,\beta}
		&= \sum_{i=1}^L \left(\frac1\beta \,D_i^2 - (D_iH_L^{\free,\periodic})\cdot D_i\right),
	\end{align}
	where $D_i$ denotes the gradient  $ \nabla_{\bbS^{N-1}}$ on the unit sphere acting on the $i$-th spin $S_i$.
	The generator \eqref{eq:gen} defines a self-adjoint operator on $L^2(\Omega_L,\mu_{L,\beta}^{\free})$
	and $L^2(\Omega_L,\mu_{L,\beta}^{\periodic})$, respectively. For any smooth function $f:\Omega_L\to \R$, the associated Dirichlet form is given by 
	\begin{gather}\label{eq:dir}
		\cD^{\free,\periodic}_{L,\beta}(f,f)  = \frac1\beta\sum_{i=1}^L\int_{\Omega_L} 
		\norm{D_i f(S)}^2
		\mu_{L,\beta}^{\free,\periodic}(dS),
	\end{gather}
	and $\|\cdot\|$ denotes the vector norm. 
	The spectral gap is defined by the variational principle
	\begin{gather}\label{eq:varprin}
		\gap^{\free,\periodic}_{L,\beta}  = 
		\inf_{f}\frac{\cD^{\free,\periodic}_{L,\beta}(f,f)  }{{\rm Var}^{\free,\periodic}_{L,\beta}(f)}\,,
	\end{gather}
	where ${\rm Var}^{\free,\periodic}_{L,\beta}(f)=\mu^{\free,\periodic}_{L,\beta}(f^2)-\mu^{\free,\periodic}_{L,\beta}(f)^2$ denotes the variance functional with respect to $\mu^{\free,\periodic}_{L,\beta}$ and $f$ ranges over all non-constant smooth functions on $\Omega_L$. The relaxation time is defined as the inverse of the spectral gap
	\begin{gather}\label{eq:trel}
		T_{\rm rel}^{\free,\periodic}(L,\beta) = \frac1{\gap^{\free,\periodic}_{L,\beta}}.
	\end{gather}
	At any fixed $\beta$ the one-dimensional nearest neighbor spin system satisfies exponential decay of covariances uniformly in the boundary conditions \cite{friedli_velenik_2017}. Then it is not difficult to prove, see e.g.\ \cite{martinelli1999lectures}, that the system has a uniformly positive spectral gap, that is  there exists a constant $C(N,\beta)$ independent of $L$, such that 
	\begin{gather}\label{eq:trel2}
		T_{\rm rel}^{\free,\periodic}(L,\beta) \leq C(N,\beta).
	\end{gather}
	Here we are interested in detecting topologically induced slowdown effects on the relaxation to equilibrium which could appear in the case where $\beta$ grows with $L$. In particular, as shown in \cite{cosco2021topologically}, these phenomena do occur in the $XY$ model ($N=2$) in the case of periodic boundary conditions, when $\beta$ is at least of order $ \log L$. 
	
	We start by showing that in the case of free boundary conditions there is no topologically induced slowdown, in the sense that the relaxation time is upper bounded as follows. 
	
	\begin{theorem}[$O(N)$ model with free boundary]\label{th:free}
		For any $N\geq 2$, $L\in\bbN$, and $\beta\geq 1$, 
		\begin{gather}\label{eq:trelfree}
			T_{\rm rel}^{\free}(L,\beta) \leq 
			C(N)\,L^2\beta^{(N+1)/2}
			,
		\end{gather}
		where $C(N)$ depends only $N$.
		In particular, for each fixed $N$, it grows at most polynomially in $\beta$.
	\end{theorem}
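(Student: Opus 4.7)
The plan is to prove Theorem~\ref{th:free} via a continuous adaptation of the canonical path method, exploiting the fact that with free boundary conditions any two configurations in $\Omega_L = (\bbS^{N-1})^L$ can be connected continuously by moving one spin at a time along a geodesic on $\bbS^{N-1}$; no topological obstruction has to be crossed. The $L^2$ will come from the length of these paths, while the $\beta^{(N+1)/2}$ will come from a bond-by-bond congestion estimate together with the $1/\beta$ normalization of $\cD^\free_{L,\beta}$.

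For each ordered pair $(S,S')\in\Omega_L\times\Omega_L$ I would first define a piecewise-geodesic path $\gamma_{S,S'}$ by concatenating $L$ segments: on the $i$-th segment, all sites $j<i$ already carry $S_j'$, the $i$-th site traces at unit speed the shortest geodesic on $\bbS^{N-1}$ from $S_i$ to $S_i'$, and all sites $j>i$ still carry $S_j$. The total arclength of $\gamma_{S,S'}$ is at most $L\pi$. Starting from the identity ${\rm Var}^\free_{L,\beta}(f)=\tfrac12\iint(f(S')-f(S))^2\,d\mu^\free_{L,\beta}(S)\,d\mu^\free_{L,\beta}(S')$, writing $f(S')-f(S)$ as the line integral of the intrinsic gradient of $f$ along $\gamma_{S,S'}$ and applying Cauchy--Schwarz with respect to arclength, one obtains after swapping integration orders
\[
{\rm Var}^\free_{L,\beta}(f) \leq \frac{L\pi}{2}\sum_{i=1}^L \int_{\Omega_L} \|D_i f(\omega)\|^2\,\rho_i(\omega)\,d\nu_L(\omega),
\]
where $\rho_i$ is the density on $\Omega_L$ of the pushforward of $d\mu^\free_{L,\beta}\otimes d\mu^\free_{L,\beta}\otimes ds$ under the map $(S,S',s)\mapsto \omega$ restricted to the $i$-th segment.

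The main technical step is then the congestion bound
\[
\rho_i(\omega) \leq C(N)\,L\,\beta^{(N-1)/2}\,\mu^\free_{L,\beta}(\omega),
\]
uniform in $i\in\{1,\ldots,L\}$, $\omega\in\Omega_L$, and $\beta\geq 1$. Combined with the identity $\sum_i\int\|D_i f\|^2\,d\mu^\free_{L,\beta}=\beta\,\cD^\free_{L,\beta}(f,f)$ it produces ${\rm Var}^\free_{L,\beta}(f)\leq C'(N)\,L^2\beta^{(N+1)/2}\,\cD^\free_{L,\beta}(f,f)$, which is \eqref{eq:trelfree} after inverting \eqref{eq:varprin}--\eqref{eq:trel}. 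To prove the congestion bound I would exploit two structural features of the free-boundary Gibbs measure: the Markov property along the chain, which decouples the left and right pieces of the path at the moving spin, and the global $O(N)$-invariance, which makes the marginal on every site uniform on $\bbS^{N-1}$ and the single-bond kernel a function $k_\beta(S\cdot S')$ of the scalar product only. Laplace asymptotics of $Z_\beta=\int_{\bbS^{N-1}} e^{\beta s_1}\,\nu(ds)\sim C_N\beta^{-(N-1)/2}e^\beta$ give $\|k_\beta\|_\infty\asymp\beta^{(N-1)/2}$, attained at coincident spins, and the Radon--Nikodym derivative $d\rho_i/d\mu^\free_{L,\beta}$ can be expressed as a short product of such bond-kernel ratios times an explicit geometric Jacobian coming from the geodesic parametrization on $\bbS^{N-1}\times\bbS^{N-1}$.

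The hard part is this congestion estimate. The measure $\rho_i$ is not of Gibbs type --- it is a hybrid in which the blocks $(\omega_{<i},\omega_i,\omega_{>i})$ are stitched together in a different way than under $\mu^\free_{L,\beta}$ --- so the comparison requires careful parametrization of the constraint ``$\omega_i$ lies on the geodesic from $S_i$ to $S_i'$ at arclength $s$'' and of the associated Jacobian. The 1D Markov property is what localizes the comparison to the two bonds adjacent to the moving site, and the $O(N)$-symmetry is what keeps the bond kernel explicit and essentially independent of $L$ for every $N\geq 2$. It is precisely this localization and the absence of any global winding constraint that is specific to free boundary; in the periodic case with $N=2$ the topological bottleneck genuinely prevents any polynomial bound of the form \eqref{eq:trelfree}, making the contrast with Theorem~\ref{th:free} intrinsic to the geometry.
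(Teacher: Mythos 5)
Your reduction of the variance to the congestion densities $\rho_i$ is standard and fine, but the key claim you yourself flag as the hard part, $\rho_i(\omega)\le C(N)\,L\,\beta^{(N-1)/2}\mu^{\free}_{L,\beta}(\omega)$, is false, and it fails by an exponential factor in $\beta$, not by a fixable constant. The Markov property does localize the ratio: integrating out the free blocks $S_{<i}$ and $S'_{>i}$ produces powers of the single-bond partition function that cancel against the normalizations, and one is left with an integral over the pairs $(S_i,S'_i)$ whose minimizing geodesic passes through $\omega_i$, with integrand
\[
\exp\Bigl(\beta\bigl( S_i\cdot\omega_{i+1}+\omega_{i-1}\cdot S'_i-\omega_{i-1}\cdot\omega_i-\omega_i\cdot\omega_{i+1}\bigr)\Bigr)
\]
times $\beta$-independent Jacobian factors. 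Now take $\omega$ with $\omega_{i-1}=\omega_{i+1}={\rm e}_1$ and $\omega_i=-{\rm e}_1$ (the moving spin anti-aligned with both neighbours), and consider the part of the fiber where $S_i$ and $S'_i$ are near-antipodal points at geodesic distance close to $\pi/2$ from $\omega_i$ (so the geodesic from $S_i$ to $S'_i$ does pass through $-{\rm e}_1$): there the exponent is $2-o(1)$, and this portion of the fiber carries mass bounded below by a constant depending only on $N$ and the chosen margin. Hence $\rho_i(\omega)\ge c\,e^{(2-\delta)\beta}\mu^{\free}_{L,\beta}(\omega)$ for every fixed $\delta>0$, and since your Cauchy--Schwarz step requires dominating $\rho_i$ by $\mu^{\free}_{L,\beta}$ pointwise (the inequality must hold for arbitrary $\|D_if\|^2$), the scheme as proposed cannot yield anything better than an $e^{2\beta}$-type bound. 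This is the classical low-temperature breakdown of naive canonical paths: moving one spin at a time between two low-energy configurations forces intermediate configurations with energy excess of order $2$, which is exactly the barrier scale appearing in the periodic $N=2$ lower bound \eqref{eq:trelper3}; in particular your argument, as written, would not even distinguish free from periodic boundary conditions.

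The paper's proof avoids paths altogether for free boundary conditions: it changes variables to increment coordinates ($S_i=R_1\cdots R_i\,{\rm e}_1$), under which $\mu^{\free}_{L,\beta}$ is an exact product measure over increments; it proves one-variable Poincar\'e inequalities for the tilted angular measures with constant $O(\beta^{(N-1)/2})$ (Lemmas~\ref{lem:Poincare_angles_Ngeq3}--\ref{lem:Poincare_angles_XY}), tensorizes, and then returns to the spin gradients via the chain rule using the uniform bounds of Lemma~\ref{lem:partial_derivatives_coordinates_angle}; the factor $L^2$ comes from Cauchy--Schwarz over the $L$ sites and the extra $\beta$ from the $1/\beta$ in \eqref{eq:dir}. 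If you want to keep a path-type argument, you would need paths along which the energy does not increase (as the paper constructs for the periodic case with $N\ge3$), rather than site-by-site geodesic interpolation between arbitrary endpoints.
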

	We note that, in light of the bound \eqref{eq:trel2},  the above estimate becomes relevant only when $\beta$ grows with $L$. In this case, if $\beta $ is at least $C\log L$ for some large $C$, as we discuss in Section \ref{sec:discussion} below, a diffusive scaling of the relaxation time should be expected, and thus at least qualitatively,  the bound in Theorem \ref{th:free} should be tight.  
	In the case of periodic boundary conditions, in agreement with \cite{cosco2021topologically}, we show the following bounds quantifying  the  topologically induced slowdown
	for $N=2$. 
	\begin{theorem}[$XY$ model on the cycle]\label{th:periodic2}
		Let  $N=2$. For $L\in\bbN$, and $\beta\geq 1$, 
		\begin{gather}\label{eq:trelper2}
			 T_{\rm rel}^{\periodic}(L,\beta) \leq C
			 \beta^{3/2}  e^{2\beta} L^2 ,
		\end{gather}
		for some absolute constant $C$. 
		Moreover, for any $L\in\bbN$, there exists a constant $c(L)$ depending on $L$ such that for all  $\beta\geq 1$,
		\begin{gather}\label{eq:trelper3}
			 T_{\rm rel}^{\periodic}(L,\beta) \geq c(L)\beta\, e^{(2-C_0L^{-1})\beta},
		\end{gather}
		where $C_0$ is an absolute constant. 
	\end{theorem}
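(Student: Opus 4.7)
The plan is to exploit the topological decomposition of $\Omega_L=(\bbS^1)^L$ into winding--number sectors. Writing $S_i=(\cos\theta_i,\sin\theta_i)$ and $\phi_i(S)\in(-\pi,\pi]$ for the principal value of $\theta_{i+1}-\theta_i$ (with $\theta_{L+1}:=\theta_1$), the winding number $W(S):=(2\pi)^{-1}\sum_{i=1}^{L}\phi_i(S)$ takes values in $\Z$ and is locally constant off the codimension--one ``walls'' $\bigcup_i\{\phi_i=\pi\}$. On each wall at least one bond satisfies $S_i\cdot S_{i+1}=-1$, so $H^{\periodic}_L\ge -L+2>-L=\min H^{\periodic}_L$, and this energy gap of $2$ is the source of the $e^{2\beta}$ factor in both bounds.

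For the upper bound \eqref{eq:trelper2}, I would combine the canonical--flow method underlying Theorem \ref{th:free} with a block decomposition along winding sectors. Set $\Omega^{(n)}_L:=\{W=n\}$, $p_n:=\mu^{\periodic}_{L,\beta}(\Omega^{(n)}_L)$, and $\mu^{(n)}:=\mu^{\periodic}_{L,\beta}(\,\cdot\mid W=n)$. A standard block decomposition inequality, adapted to the reversible diffusion setting, gives
\[
T_{\rm rel}^{\periodic}(L,\beta)\;\le\; C\,\Big(\sup_{n\in\Z} T_{\rm rel}(\mu^{(n)})\Big)\,T_{\rm proj},
\]
where $T_{\rm proj}$ is the relaxation time of the induced jump process on $\Z$ with stationary weights $(p_n)$. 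After quotienting out the global rotational zero mode, the affine constraint $W=n$ reduces the within--sector problem to a variant of the free boundary chain of length $L-1$, and the flow construction used for Theorem \ref{th:free} gives $\sup_n T_{\rm rel}(\mu^{(n)})\le C L^2\beta^{3/2}$. For the projected chain, a Laplace expansion around a typical wall configuration (one bond near angle $\pi$, all others close to alignment) bounds the inter--sector transition rate from below by a multiple of $e^{-2\beta}$, yielding $T_{\rm proj}\le C e^{2\beta}$ and hence \eqref{eq:trelper2}.

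For the lower bound \eqref{eq:trelper3}, the plan is to apply the variational principle \eqref{eq:varprin} with a test function that detects the winding number. I would take $f(S)=\chi\big(\widetilde W(S)\big)$, where $\widetilde W:\Omega_L\to\R$ is a smoothed version of $W$ (equal to $W$ outside a $\delta$--neighborhood of every wall and smoothly interpolated across the walls) and $\chi:\R\to[0,1]$ is a smooth bump separating level $0$ from level $1$. The variance is then bounded below by $c\,p_1$. Two Laplace expansions provide the key estimates: expanding around the minimizer $\phi_i\equiv 2\pi/L$ of $H^{\periodic}_L$ under the constraint $W=1$ gives $p_1\ge c_L\,\beta^{-1/2}\,e^{-2\pi^2\beta/L}$, while expanding around a wall minimizer bounds $\cD^{\periodic}_{L,\beta}(f,f)$ from above by $C(L,\delta)\,e^{-2\beta}$ times polynomial factors in $\beta$. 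Dividing and absorbing prefactors yields $\gap^{\periodic}_{L,\beta}\le c(L)^{-1}\beta^{-1}e^{-(2-2\pi^2/L)\beta}$, which is \eqref{eq:trelper3} with $C_0=2\pi^2$.

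The principal obstacles I anticipate are (i) implementing the block decomposition cleanly in the continuous setting, in particular formulating the projected chain so that its transition rates really reflect the wall Gibbs weights (the walls have codimension one, so this requires a co--area--type argument with an appropriate tubular approximation), and checking that the within--sector flow argument from Theorem \ref{th:free} truly survives the affine constraint $\sum\phi_i=2\pi n$; and (ii) carrying out the two Laplace expansions with sufficient precision to expose both the $e^{-2\beta}$ barrier supported on the walls and the competing $e^{-2\pi^2\beta/L}$ entropic cost of a nontrivial winding, so that the two combine into the advertised rate $e^{-(2-C_0/L)\beta}$.
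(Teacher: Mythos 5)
Your lower bound is essentially the paper's argument: a smooth test function built from the winding number, with gradient supported in a $\delta$-neighborhood of the ``wall'' $\{S_i\cdot S_{i+1}\approx -1\}$, plugged into the variational principle \eqref{eq:varprin}, with the variance bounded below by the product of the masses of the zero- and nonzero-winding sectors. The only difference is quantitative: where you invoke Laplace expansions for $\mu(\{\cW=1\})$ and for the wall neighborhood, the paper's Proposition \ref{prop:crude} uses a cruder comparison of Gibbs weights of explicit $\delta$-balls around the aligned configuration $S^0$ and the evenly wound configuration $S^1$, which keeps all prefactors $\beta$-independent and delivers \eqref{eq:trelper3} directly (your sharper expansions are essentially what Remark \ref{rem:metastab} sketches, following \cite{cosco2021topologically}); any stray polynomial factors in $\beta$ can indeed be absorbed into the exponent at the price of an $L$-dependent constant, so this half of your plan is sound.

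The upper bound is where there is a genuine gap. Your route through a winding-sector decomposition requires two inputs neither of which is available as stated. First, the claim that the within-sector relaxation time is at most $CL^2\beta^{3/2}$ ``by the flow construction used for Theorem \ref{th:free}'' does not follow: the free-boundary proof rests entirely on the fact that in the increment variables the Gibbs measure is a \emph{product} measure (see \eqref{eq:tensor}), and the affine constraint $\sum_i\phi_i=2\pi n$ destroys exactly that product structure, so conditioning on $\{\cW=n\}$ (uniformly in $n$, including highly frustrated sectors) needs a new argument, not a citation of Theorem \ref{th:free}. Second, the ``standard block decomposition adapted to the reversible diffusion setting'' is not standard for a hard partition by codimension-one walls: the indicator of a sector has distributional gradient given by a surface measure and is not of finite Dirichlet energy, so the usual two-scale/projected-chain Poincar\'e inequalities (which in the discrete or overlapping-block settings avoid this issue) do not apply off the shelf, and the ``transition rates'' of your projected chain on $\Z$ would have to be constructed via a co-area/tubular argument and bounded below by $ce^{-2\beta}$ from scratch --- precisely the obstacles you flag but do not resolve. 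The paper avoids all of this: since $|S_i\cdot S_{i+1}|\le 1$, the densities $\dd\mu^{\periodic}_L/\dd\mu^{\free}_L$ and $\dd\mu^{\free}_L/\dd\mu^{\periodic}_L$ are each bounded by $e^{\beta}$ times a partition-function ratio (see \eqref{eq:relative}), so both the variance and the Dirichlet form transfer between the two ensembles at a total cost $e^{2\beta}$, and \eqref{eq:trelper2} follows immediately from Theorem \ref{th:free} with $N=2$. As written, your proposal does not prove the upper bound; replacing the sector decomposition by this comparison argument closes the gap with far less machinery.
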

	The lower bound \eqref{eq:trelper3} is based on a rather crude argument and provides no meaningful $L$-dependance. 
	However, as discussed in Section \ref{sec:metastab}, the leading exponential term $e^{2\beta}$ in \eqref{eq:trelper2}-\eqref{eq:trelper3} captures the correct metastable behavior associated to the energy barrier of size $2\beta$ between states with winding number zero and states with non-zero winding number. We refer to Remark~\ref{rem:metastab} for the sketch of a finer energy-entropy argument providing quantitative $L$ dependance in the metastable regime $\beta\ge C\log L$. 

	Finally, we prove that there is no topologically induced slowdown
	for $N\geq 3$.
	\begin{theorem}[$O(N)$ model on the cycle, $N\ge 3$]\label{th:periodic3}
		For  $N\geq 3$, $L\in\bbN$, and $\beta\geq 1$, 
		\begin{gather}\label{eq:trelperN}
			T_{\rm rel}^{\periodic}(L,\beta) \leq C(L) \beta.
		\end{gather}
		Moreover, in the case of the Heisenberg model ($N=3$), one can take  $C(L)=e^{C L\log L}$ for some constant $C$ not depending on $L$ and $\beta$.
	\end{theorem}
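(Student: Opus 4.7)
The plan is to establish Theorem~\ref{th:periodic3} via a continuous version of the canonical paths / Poincaré inequality method. The key topological input that separates $N\geq 3$ from the $XY$ case is the simple-connectedness of $\bbS^{N-1}$: any two configurations in $\Omega_L$ can be joined by a smooth path whose maximum energy stays bounded as $\beta\to\infty$, which is exactly what fails for $N=2$ when the two endpoints have different winding numbers.

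The starting point is the identity
$${\rm Var}_{L,\beta}^{\periodic}(f)\;=\;\tfrac12\iint (f(S')-f(S))^2\,\mu^{\periodic}_{L,\beta}(dS)\,\mu^{\periodic}_{L,\beta}(dS').$$
For each pair $(S,S')$ I would choose a smooth canonical path $\gamma_{S,S'}:[0,1]\to\Omega_L$, apply Cauchy--Schwarz in time to bound $(f(S')-f(S))^2$ by the length of $\gamma_{S,S'}$ times the integral along $\gamma_{S,S'}$ of $\|\nabla_{\Omega_L}f\|^2$, and exchange orders of integration through the co-area change of variables $(S,S',t)\mapsto(S,S',\gamma_{S,S'}(t))$. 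This reduces the spectral gap estimate to bounding, at each intermediate configuration $X$, the total $\mu\otimes\mu$-mass of pairs $(S,S')$ whose canonical path visits a neighborhood of $X$, divided by $\mu^{\periodic}_{L,\beta}(X)$; an $L$-dependent polynomial-in-$\beta$ bound on this congestion, together with \eqref{eq:dir}, immediately implies $T_{\rm rel}^{\periodic}\leq C(L)\beta$.

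For the paths themselves I would pass through the reference configuration $S^{\star}=(e_N,\dots,e_N)$, concatenating $\gamma_{S,S^{\star}}$ with the time-reversal of $\gamma_{S',S^{\star}}$. The simplest choice of $\gamma_{S,S^{\star}}$ is to rotate each coordinate $S_i$ along the geodesic in $\bbS^{N-1}$ from $S_i$ to $e_N$; because $N\geq 3$ the antipodal locus $\{S_i=-e_N\}$ has codimension at least $2$ and can be avoided by a measurable choice of rotation axis. The decisive topological point is that for $N\geq 3$ no such deformation is obstructed and no intermediate configuration is ever forced to contain antipodal consecutive spins, so no $e^{2\beta}$-type energy barrier is crossed. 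For the Heisenberg specialisation ($N=3$) I would refine this by rotating spins sequentially, one stage per spin: during stage $i$ only the two bonds $\{i-1,i\}$ and $\{i,i+1\}$ change, and the instantaneous Hamiltonian differs by a bounded perturbation from that of a free-boundary chain, to which Theorem~\ref{th:free} can be compared stage by stage. Summing the $L$ stagewise contributions while tracking the measure ratios between stages should produce the explicit $C(L)=e^{CL\log L}$.

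The main obstacle is turning the qualitative fact $\pi_1(\bbS^{N-1})=0$ into a quantitative congestion estimate. Constructing a path is easy; proving that the canonical family as a whole does not oversample any region is not. Concretely, one has to show that the pushforward of $\mu^{\periodic}_{L,\beta}\otimes\mu^{\periodic}_{L,\beta}\otimes dt$ under $(S,S',t)\mapsto\gamma_{S,S'}(t)$ has a Radon--Nikodym derivative with respect to $\mu^{\periodic}_{L,\beta}$ that grows only polynomially in $\beta$; any residual concentration of paths on low-probability configurations would reintroduce an exponential in $\beta$ and destroy the bound. The $e^{CL\log L}$ factor in the Heisenberg case signals that the sequential scheme, while effective at removing $\beta$-exponentials, is wasteful in $L$; obtaining a truly polynomial-in-$L$ bound would likely require a higher-dimensional family of paths exploiting the full rotational symmetry of $\bbS^{N-1}$.
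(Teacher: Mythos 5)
There is a genuine gap at the heart of your construction. The criterion you need is not that intermediate configurations generically avoid antipodal \emph{consecutive} spins, but that the energy does not increase along the chosen paths by any amount bounded away from zero: in the continuous canonical-path bound the congestion factor carries $e^{\beta\,\sup\Delta H}$, so an $O(1)$ energy increase already destroys the $C(L)\beta$ bound. Your path family---moving each spin along its geodesic to the fixed target $e_N$---does incur such an increase. Take $N=3$ and a configuration winding once around a great circle that passes through $e_N$: two consecutive spins sitting just on either side of $-e_N$ are pulled to opposite sides of the sphere, and at intermediate times they are nearly antipodal \emph{to each other}, so that bond energy rises from about $-1$ to about $+1$, a barrier of height $\approx 2$ which reintroduces exactly the $e^{2\beta}$ factor you claim to avoid. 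The codimension-two remark only fixes the measure-zero non-uniqueness of the geodesic when a spin equals $-e_N$; it does not prevent this barrier. The same defect affects your sequential one-spin-at-a-time scheme for $N=3$ (rotating $S_i$ towards $e_N$ while a neighbour sits near $-e_N$ raises that bond by up to $2$). This is precisely why the paper's deformation is configuration-dependent: it first runs the gradient flow of $H$ to bring the configuration near a critical point, where all spins lie on a common great circle; it then contracts the spins towards a pole $s$ \emph{perpendicular to that circle} (choosing $s$ or $-s$ according to the sign of the initial energy derivative), a direction which exists only when $N\geq 3$ and along which the energy is shown to be non-increasing; finally it rotates the whole configuration into a neighbourhood of ${\rm e}_1$. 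The only price is the entropy (Jacobian) of these flows, which is bounded uniformly in $\beta$ and, for $N=3$, explicitly by $(L/\epsilon)^L$, whence $C(L)=e^{CL\log L}$.

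A secondary but also fatal point as stated: funnelling every path through the single configuration $S^{\star}$ makes the time-$\tfrac12$ pushforward of $\mu\otimes\mu\otimes dt$ a Dirac mass at $S^{\star}$, so the Radon--Nikodym/congestion bound you ask for (a density with respect to $\mu$ growing polynomially in $\beta$) cannot hold. The paper instead maps every configuration into a positive-measure region (the ``arctic''), where the Hamiltonian is convex, and closes the argument there with a Brascamp--Lieb (local Poincar\'e) inequality; some such fattening of the target, combined with an energy-monotone deformation as above, is what your scheme is missing.
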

	
	\subsection{Discussion, conjectures and open problems}\label{sec:discussion}
	We emphasize that these estimates are far from optimal and do not capture all features of relaxation to equilibrium of the spin chain. However, they are sufficient to rule out the presence of topological bottlenecks in the relaxation process for all $N\geq 2$ in the case of free boundary and for $N\geq 3$ in the case of periodic boundary. Let us give some comments on our proofs. Roughly speaking, for Theorem \ref{th:free} we use the fact that the spin chain has a product structure in the case of free boundary conditions, when one considers the ``increment'' variables $S_{i+1}-S_i$. This is achieved by a suitable change of variables that allows a convenient representation for the Hamiltonian. In  this setup, a simple tensorization argument applies and one obtains the estimate \eqref{eq:trelfree} by changing back to the original spin variables. The explicit dependance on the variable $\beta$ in \eqref{eq:trelfree} is obtained by a quantitative bound on the spectral gap for a single increment variable. 
	The upper bound in \eqref{eq:trelper2} is obtained by reducing the problem to the free boundary case via a perturbation argument. On the other hand, for the lower bound in   \eqref{eq:trelper2} we use an upper bound on the Cheeger constant. This is based on the choice of a suitable bottleneck event that was already analysed in \cite{cosco2021topologically}. 
	
	The proof of Theorem \ref{th:periodic3} requires more work. We use a continuous version of the so-called canonical path method, see e.g.\ \cite{sinclair1992improved} for a classical formulation in the discrete setting. The main idea is to construct a path consisting of a continuous, energy decreasing transformation, which allows one to move any given configuration of $L$ points on a sphere $\bbS^{N-1}$ to a configuration where all points lie in a small neighbourhood of a pole of the sphere. Once the system is confined to such a neighbourhood, convexity considerations allow us to conclude the desired statement. Checking that such a construction is possible, and controlling the entropy associated to the contracting path requires some non-trivial analysis.
	Note that this can only work in the case $N\geq 3$, since for $N=2$ it is prevented by the topological obstruction discussed above.

	Concerning dimension higher than one, in light of the above, it is natural to conjecture that if one considers the $O(N)$ model on the $d$-dimensional torus $(\bbZ/L\bbZ)^d$, then topological bottlenecks are related to the homotopy structure. More precisely, one expects the relaxation time to grow exponentially with $\beta$ when there are some non-trivial homotopy classes of maps from $\bbT^d \to \bbS^{N-1}$.
	
	Indeed, when $\beta$ is large the angle between two neighboring spins is small, and the discrete configuration of the spin system looks like a continuous field, i.e., a map from $\bbT^d$ to $\bbS^{N-1}$. With this in mind, the dynamics of the spin system corresponds to homotopy of the continuous field, and if there are several homotopy classes, then moving from one to the other requires the creation of a discontinuity. The energetic cost of this discontinuity creates the topological bottleneck.

	While this description provides a good intuition to explain the results in this paper, in a more general setting we expect such metastability to depend also on the Riemannian structure of $\bbS^{N-1}$ and not solely its topology.
	Consider for example an hourglass spin state, which has the same topology as $\bbS^2$ but non-constant curvature. As in the $O(3)$ model, the ground state is when all spins point in the same direction.
	However, this system contains a metastable state, where spins are placed on the narrow part of the hourglass, equally spaced, so that the configuration has non-zero winding.
	In this example, the energetic cost is due to the continuous transformation and not the creation of discontinuities. We therefore see that endowing the same topological sphere with a Riemannian structure other than the standard one (i.e., the homogeneous metric induced by the scalar product in $\R^3$) could change the metastability properties.

	This example suggests that homotopy classes of maps are not the correct candidate to be a marker for metastability. To remedy this, introduce the following energy functional. For $M,M'$ two compact Riemannian manifolds, and $f:M\to M'$, let $\calE(f)$ be the energy of $f$:
	\begin{equation}
		\mathcal{E}(f) = \int_M \norm{Df(x)}^2_{f(x)} \dd x,
	\end{equation}where $\norm{\ }_p$ is the norm on the tangent space of $M'$ at $p$.

	Extrema of the functional $\mathcal E$, called \emph{harmonic maps}, are well studied mathematical objects, see e.g. \cite{EellsLemaire83harmonicmaps}. Loosely stated, our general conjecture is that if one considers a model on some graph $G$ with spin taking value in the manifold $M'$, and if
	\begin{itemize}
		\item the graph $G$ ``approximates well'' the manifold $M$ (for example: the graph Laplacian on $G$ is close in some sense to the Laplacian on $M$),
		\item the Hamiltonian of the model is a ``approximate discrete version'' of the energy functional $\calE$. This is the generic behavior of nearest-neighbor attractive interaction near low energy states.
	\end{itemize}Then, the existence of metastable states is equivalent to the existence of non-trivial local minimizers of $\calE$. Specializing to the spin $O(N)$ model, one obtains the next (more precise) conjecture.
	\begin{conjecture}
		Let $M$ be a compact Riemannian manifold, and define the energy functional operating on $f:M\to \bbS^{N-1}$ as
		\begin{equation}
			\mathcal{E}(f) = \int_M \norm{Df(x)}^2 \dd x,
		\end{equation}
		where the norm is taken with respect to the standard Riemannian metric on the sphere.
		Consider the $O(N)$ model on  a graph $G$ discretizing $M$. Then the relaxation time of the corresponding Langevin dynamics grows exponentially fast with $\beta$ if and only if $\mathcal E$ has non-trivial (i.e., non-constant) local minima.
	\end{conjecture}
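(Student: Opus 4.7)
The conjecture is bi-directional, and the plan is to attack the two implications by generalising the techniques used respectively for Theorem~\ref{th:periodic2} and Theorem~\ref{th:periodic3}. For the direction ``non-trivial local minimum $\Rightarrow$ exponential slowdown'', suppose $\mathcal{E}$ admits a non-constant local minimizer $f^\star:M\to\bbS^{N-1}$. The plan is a Cheeger bottleneck argument: define a set $A\subset\Omega_G$ of discrete configurations which, interpolated on $G$, are $H^1$-close to $f^\star$, and the corresponding set $B$ of configurations close to the absolute minimum (a constant map). Any continuous deformation of maps from $A$ to $B$ must cross a min--max saddle $f^\sharp$ with $\Delta:=\mathcal{E}(f^\sharp)-\mathcal{E}(f^\star)>0$. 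Bounding the Gibbs weight of a tubular neighbourhood of the saddle manifold by a Laplace-type estimate gives $\mu_{G,\beta}(\partial A)\leq C e^{-\beta\Delta}$, while $\mu_{G,\beta}(A)$ stays of order one, and the standard Cheeger inequality yields $T_{\rm rel}\geq c\,e^{\beta\Delta(1-o(1))}$, exactly as in the lower bound of Theorem~\ref{th:periodic2}.

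For the reverse direction, assume that $\mathcal{E}$ has no non-constant local minimizer and try to extend the continuous canonical path method of Theorem~\ref{th:periodic3}. Interpret any configuration $S$ as the restriction to $G$ of a smooth map $f_S:M\to\bbS^{N-1}$ obtained by a canonical interpolation, and then flow $f_S$ along the harmonic map heat flow $\partial_t f=-\nabla_{L^2}\mathcal{E}(f)$. By hypothesis the only $\omega$-limit is a constant map, so (modulo the issues discussed below) $f_S$ converges to a constant along a trajectory of monotonically decreasing Dirichlet energy. Restricting this trajectory to $G$ produces an energy-monotone canonical path $\gamma_S:[0,T]\to\Omega_G$ from $S$ to a neighbourhood $V$ of a constant configuration. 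On $V$ the Hamiltonian is strictly convex spin-by-spin, so a local Poincar\'e inequality yields a spectral gap polynomial in $\beta$; the canonical path inequality, with a Jacobian estimate for $S\mapsto\gamma_S(t)$ furnishing the entropy control, then transfers this local bound to the full spectral gap up to a polynomial-in-$|G|$ prefactor.

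The decisive obstacle is the gradient-flow step. The harmonic map heat flow into $\bbS^{N-1}$ can develop bubbling singularities in finite time when $\dim M\geq 2$, \emph{even when no non-constant harmonic map exists on $M$}, because energy may concentrate at isolated points. A singularity of this kind would create an artificial bottleneck in the canonical path, so the flow must be replaced by a regularised version (e.g.\ a Ginzburg--Landau relaxation with parameter $\e$, or a discrete gradient flow of the Hamiltonian directly on $G$), and one has to prove that the entropy introduced by the regularisation contributes only polynomially in $\beta$. A second obstacle is the matching between the mesh of $G$ and the inverse temperature: the discrete Hamiltonian approximates $\mathcal{E}$ accurately only when the configuration varies little across each edge, which is a regime controlled by $\beta$ relative to the combinatorics of $G$, and the canonical path must stay inside this regime. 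Finally, classifying the local minima of $\mathcal{E}$ is itself a deep question in the theory of harmonic maps (Eells--Sampson, Schoen--Uhlenbeck, Hartman--Wood), so in practice the conjecture is best attacked case by case, beginning with $M=\bbT^d$, where the homotopy classes of maps into $\bbS^{N-1}$ and their harmonic representatives are classically understood.
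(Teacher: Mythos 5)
The statement you are addressing is a \emph{conjecture} in the paper: the authors explicitly leave it open (they even single out quantitative bounds for $d>1$ and the case $N=4$, $M=\bbS^3$ as open problems), so there is no proof in the paper to compare your argument against, and your text cannot be judged as ``the same approach'' or ``a different route'' to an existing proof. More importantly, what you have written is a research program, not a proof, and the gaps are not merely technical polish — you identify the decisive one yourself. For the direction ``no non-constant local minima $\Rightarrow$ polynomial relaxation'', your plan rests on driving an interpolated map to a constant by the harmonic map heat flow, but for $\dim M\geq 2$ this flow can concentrate energy and blow up in finite time even when no non-constant harmonic map exists; replacing it by a regularised or discrete flow and proving that the associated entropy (Jacobian) term costs only a $\beta$-polynomial factor is exactly the missing content, and nothing in your sketch supplies it. Note that even in the one-dimensional case the paper's canonical-path argument (Lemma \ref{lem:pathargument} together with the flow construction) only yields $C(L)=e^{CL\log L}$ for $N=3$ and a non-quantitative constant otherwise, because the entropy and covering constants degrade with the system size; your claim of a ``polynomial-in-$|G|$ prefactor'' in higher dimension is therefore asserted, not derived.

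The lower-bound direction is also incomplete as stated. Passing from a non-constant local minimizer $f^\star$ of $\mathcal E$ to a genuine bottleneck of the \emph{discrete} Gibbs measure requires (i) a uniform-in-mesh energy gap between the discrete neighbourhood of $f^\star$ and any path to the constant configurations (a quantitative min--max statement that does not follow from local minimality of $\mathcal E$ alone without some stability/nondegeneracy hypothesis), and (ii) an energy--entropy balance: the Laplace-type estimate $\mu_{G,\beta}(\partial A)\leq Ce^{-\beta\Delta}$ competes against volume factors growing with $|G|$, so the conclusion $T_{\rm rel}\geq c\,e^{\beta\Delta(1-o(1))}$ only holds in a regime where $\beta$ dominates $\log|G|$ — a restriction visible already in the paper's Proposition \ref{prop:crude} and Remark \ref{rem:metastab} for the $XY$ cycle, where the constants $c(L)$, $C_{\delta,L}$ carry uncontrolled $L$-dependence. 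In short, your proposal is a reasonable roadmap consistent with the paper's heuristics, but both implications remain unproven, and the conjecture stays a conjecture.
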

	In particular, for the case of $\Z^d$ with free boundary conditions, there should be no such bottlenecks, that is relaxation times growing polynomially as a function of $\beta$, for all $N\geq 2$, since here the homotopy group is always trivial and the harmonic mappings are all constant. However, we leave it as an open problem to obtain quantitative bounds in dimension $d>1$.
	
	An interesting example to study would be the case $N=4$, $M=\bbS^3$: there are several homotopy classes of continuous maps from $\bbS^3$ to $\bbS^3$ (even countably many, by the Hurewicz theorem), but the energy $\calE$ has no non-trivial local minima \cite{EellsLemaire83harmonicmaps}, so our conjecture is that there is no metastable behaviour in this case.
	
\subsection{Scaling limit in low temperature heuristics}
	We conclude this introduction with a brief informal discussion of the behavior of the system for extremely low temperature, that is when $\beta$ is large as a function of $L$, that could serve as a heuristic guide for a more precise analysis in this regime. For the sake of simplicity, we discuss the problem only for the XY model ($N=2$) and give only a brief comment on the case $N\geq 3$ afterwards. 
	When $N=2$ we may parametrize $S_i$ by a single  angle $X_i\in [-\pi,\pi]$ and if  $\beta=\beta(L)$ is very large we may assume there is a well defined  lift to $\bbR$, so that our variables are now $X_i\in\bbR$, and the center of mass $\overline{X}_L  =\frac{1}{L}\sum_{i=1}^LX_{i}$ satisfies the relation
	\begin{align}\label{eq:bari}
		\dd\overline{X}(t) & =\frac{\sqrt 2}{\beta\sqrt L}\,d B(t)\,,
	\end{align}
	where $B(t)$ denotes the standard brownian motion. To see this, observe that by definition \eqref{eq:gen}, the dynamics is given by the SDEs
	\begin{align}\label{eq:eq1}
		\dd X_i(t)  =-\partial_{X_{i}}H(X)\,\dd t+\frac{\sqrt 2}{\beta}dB_i(t)\,,\qquad i=1,\dots,L
	\end{align}
	where  the $B_i$'s are independent standard Brownian motions, and the interaction has the form
	\[
	H(X)=\sum_{i\sim j}h_{ij}\left(X_{i}-X_{j}\right)=\sum_{i\sim j}h_{ij}\left(X_{j}-X_{i}\right),
	\]
	where the sum ranges over the edges of some finite graph, and $h_{ij}=h_{ji}=\cos(\cdot)$. The graph is the segment $\{1,\dots,L\}$ in the case of free boundary conditions, and it is the $L$-cycle in case of periodic boundary.
	Therefore,  
	\[
	\sum_{i}\partial_{X_{i}}H(X)=\sum_{i}\sum_{j:\,i\sim j}h_{ij}'(X_{i}-X_{j})=-\sum_{i}\sum_{j:\,i\sim j}h_{ij}'(X_{j}-X_{i}) =0.
	\]
	In particular, \eqref{eq:bari} shows that the center of mass relaxes on a time scale proportional to $L$. Clearly, the above holds in any dimension $d\geq 1$, for the $d$-dimensional cube with side $L$ with free or periodic boundaries, provided $L$ is replaced by $L^d$.  This can be seen as the starting point to establish volume order relaxation times estimates for the low temperature XY model. However, one has to keep in mind that this center of mass motion can be interpreted as a meaningful mode of the system, namely the global phase, only when all spins
	point in approximately the same direction. As noted in \cite{cosco2021topologically}, for $d=1$, the condition $\beta \gg L$ suffices to ensure that with large probability all spins are closely aligned, that is the winding number is zero, and there is a well defined lift $X$ as above. When  $\beta \not \gg L$, 
	one needs to consider the sum of all spins, and of the corresponding Brownian motions in $\bbR^2$,  as actual vectors, which makes the analysis considerably more involved; see \cite{becker2020spectral} for a treatment of the mean field case.
	
	Beyond the center of mass discussed above, one can also consider a stochastic PDE describing the continuum limit of the full configuration of the system.
	Consider the field  $\phi:S^{1}\times\R\to S^{1}$ defined by
	\begin{align}\label{eq:defphi}
		\phi(\xi,\tau)=\frac{1}{\sqrt{L}}X_{\left\lfloor L\xi/2\pi\right\rfloor }(L^{2}\tau).
	\end{align}
	When $\beta$ is very large, we may approximate $\partial_{X_{i}}H(X)\approx X_{i}-X_{i-1}-(X_{i+1}-X_{i})\approx 4\pi^2L^{-3/2}\Delta\phi(\xi,\tau)$,
	so, with $t=L^2\tau$ one has $L^{-1/2}\partial_{X_{i}}H(X)\dd t\approx 4\pi^2\Delta\phi(\xi,\tau)\dd \tau$. Moreover, reasoning as in \cite[Section 2]{hairer2009introduction}, 
	\[
	L^{-1/2}\frac{\dd B_{\left\lfloor L\xi/2\pi\right\rfloor }(t)}{\dd t}\approx W(\xi,\tau)\,,
	\]
	where $W$ is space-time white noise on $S^1\times\bbR$. In conclusion, from \eqref{eq:eq1}, the field $\phi$ satisfies the Edwards-Wilkinson equation, or stochastic heat equation
	\begin{align}\label{eq:EW}
		\partial_\tau\phi(\xi,\tau) & \approx 4\pi^2\Delta\phi(\xi,\tau) + \sqrt 2\beta^{-1}W(\xi,\tau).
	\end{align}
	We note that the continuum approximation
	discussed above should give a valid description of the system, on suitable time scales, 
	provided $\beta$ grows at least logarithmically with $L$. 
	In particular, this suggests a diffusive time scale of order $L^2$ for relaxation, up to polylog corrections, when the system has free boundary conditions, as pointed out after Theorem \ref{th:free}. After this diffusive time scale, the system will relax by pure diffusion of the global phase. 
	If we consider periodic boundary conditions, then the situation is different, since one has to impose the 
	condition that $\phi(2\pi,\tau)-\phi(0,\tau)=\text{winding number}$, and thus the equation \eqref{eq:EW} is valid only within a given homotopy class, i.e.\ it describes the system for times much smaller than the metastable times $T_{\text{MS}}\approx e^{2\beta -\log(L)}$ detected in \cite{cosco2021topologically}, at which the XY chain changes winding number.
	On the metastable time scale $T_{\text{MS}}$ the dynamics will involve a random walk between the adjacent homotopy classes corresponding to $\pm1$ jumps of  the global winding number. 
	
	The above heuristic analysis can be in principle repeated for any $N\geq 3$, by working with the local coordinates chosen to parametrize the sphere $\bbS^{N-1}$. One gets, independently for each coordinate, a SDE for the center of mass and a stochastic heat equation as above. However, because of the dependence on the  choice of local coordinates on the manifold, the 
	interpretation of these equations is no longer obvious in this case.

	\section{Free boundary conditions}
	In this section we prove Theorem \ref{th:free}. We start by choosing appropriate coordinates.
	\subsection{Coordinates}
		 For $N=2$, we parametrize the system using angles $\theta_1,\dots,\theta_L\in [0,2\pi]$ and by setting
	\begin{equation*}
		S_i = \big(\cos(\theta_1+\dots+\theta_i), \sin(\theta_1+\dots+\theta_i)\big).
	\end{equation*}The uniform measure on $(\bbS^1)^L$ is then the image of the uniform measure on $[0,2\pi]^L$ by the above mapping.
	
	For $N\geq 3$,
	a vector $s \in  \bbS^{N-1}$ can be parametrized as
	\begin{equation*}
		s=s(\theta,v)=\cos(\theta) {\rm e}_1 + \sin(\theta) v\,,
	\end{equation*}
	$({\rm e}_1,\dots, {\rm e}_N)$ is the canonical orthonormal basis of $\bbR^N$, $\theta\in [0,\pi]$, and $v$ is a unit vector in the orthogonal complement of ${\rm e}_1$. Then, sampling $s$ uniformly on $\bbS^{N-1}$ is equivalent to sample $v$ uniformly on $\{x:\ x\cdot {\rm e}_1 = 0, \norm{x} =1 \}$, and $\theta$ proportionally to $\sin(\theta)^{N-2}$. For $\theta,v$ as before, denote $R_{v,\theta}$ the rotation matrix (in the standard basis) rotating the plane spanned by $v$ and ${\rm e}_1$ by an angle $\theta$ so that $R_{v,\theta} {\rm e}_1 = \cos(\theta) {\rm e}_1 + \sin(\theta) v$. In other words, $R_{v,\theta} $ is given by
	\begin{equation*}
		R_{v,\theta} x =
		x + \big((\cos(\theta)-1)x\cdot {\rm e}_1 - \sin(\theta)x\cdot v\big){\rm e}_1 + \big(\sin(\theta) x\cdot{\rm e}_1 + (\cos(\theta)-1)x\cdot v\big)v.
	\end{equation*}

	Let then $v_i, i=1,\dots, L$ be a sequence of uniform random variables on $\{x:\ x\cdot {\rm e}_1 = 0, \norm{x} =1 \}$, let $\theta_i, i=1,\dots, L$ be an sequence of random variables on $[0,\pi]$ with density proportional to $\sin(\theta_i)^{N-2}$. Suppose $v_i,\theta_i, i=1,\dots, L$ forms an independent family. Set
	\begin{equation*}
		R_i = R_{v_i,\theta_i},\quad S_i = R_1\dots R_i {\rm e}_1.
	\end{equation*}
	
	The first ingredient we need is the next simple Lemma.
	
	\begin{lemma}
		\label{lem:iid_uniform_SN}
		The sequence $S_i, i=1,\dots, L$ is an i.i.d. sequence of uniform random variables on $\bbS^{N-1}$.
	\end{lemma}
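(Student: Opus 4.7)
The plan is to proceed by induction on $i$, using the rotational invariance of the uniform measure on $\bbS^{N-1}$ and the independence of the rotations $R_i$.

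First I would establish the base case, namely that $S_1 = R_1 e_1 = \cos(\theta_1)e_1 + \sin(\theta_1) v_1$ is uniformly distributed on $\bbS^{N-1}$. This is the standard polar decomposition of the uniform measure on the sphere: writing a point $s \in \bbS^{N-1}$ as $s = \cos(\theta)e_1 + \sin(\theta)v$ with $\theta \in [0,\pi]$ and $v \in \bbS^{N-2}$ (the equator in the orthogonal complement of $e_1$), the volume element factors as $\sin(\theta)^{N-2}\, d\theta\, d\sigma(v)$, where $d\sigma$ is the uniform measure on the equatorial $(N-2)$-sphere. Since $\theta_1$ and $v_1$ are independent with exactly these marginals, $R_1 e_1$ is uniform on $\bbS^{N-1}$. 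By the same argument, each $R_i e_1$ is uniform on $\bbS^{N-1}$.

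For the inductive step, I assume $(S_1,\dots,S_{i-1})$ are i.i.d.\ uniform on $\bbS^{N-1}$, and write
\begin{equation*}
    S_i = Q_{i-1} (R_i e_1), \qquad Q_{i-1} := R_1 \cdots R_{i-1}.
\end{equation*}
The key observation is that $Q_{i-1}$ is a (measurable) function of $(v_1,\theta_1,\dots,v_{i-1},\theta_{i-1})$ only, hence is independent of $(v_i,\theta_i)$ and therefore of $R_i e_1$. Conditioning on $Q_{i-1}$, the random vector $R_i e_1$ is still uniform on $\bbS^{N-1}$, and since $Q_{i-1}$ is an orthogonal transformation, $Q_{i-1}(R_i e_1)$ is uniform on $\bbS^{N-1}$ as well. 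Because this conditional distribution does not depend on $Q_{i-1}$, the vector $S_i$ is unconditionally uniform on $\bbS^{N-1}$ and independent of $\sigma(R_1,\dots,R_{i-1}) \supseteq \sigma(S_1,\dots,S_{i-1})$.

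There is no real obstacle here; the statement is essentially a repackaging of the fact that composition with an independent rotation preserves the uniform law on $\bbS^{N-1}$. The only small point to be careful about is checking that the joint density of $(\theta_i,v_i)$ really matches the polar decomposition of the uniform measure on the sphere, which I would do by an explicit change of variables (or by citing the standard formula for the area element in spherical coordinates). The induction then delivers the full i.i.d.\ statement for $(S_1,\dots,S_L)$.
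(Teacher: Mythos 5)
Your argument is correct and follows essentially the same route as the paper: condition on the realization of $R_1,\dots,R_{i-1}$, use that $R_i{\rm e}_1$ is uniform on $\bbS^{N-1}$ (which you justify via the polar decomposition of the spherical measure, a fact the paper simply asserts), and invoke rotation invariance to conclude that $S_i$ is uniform and independent of the past. Nothing further is needed.
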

	\begin{proof}
		It is sufficient to check that for every realization of $R_1,\dots, R_{i-1}$, $S_i$ is uniformly distributed on $\bbS^{N-1}$. This follows directly from rotation invariance of the spherical measure and the fact that $R_i {\rm e}_1$ is uniform on $\bbS^{N-1}$.
	\end{proof}
	
	The final ingredient we will need is a control over partial derivatives of $S_i$ with respect to the angles $\theta_{j}$s and the vectors $v_j$s.
	We note here that by $\frac{\partial}{\partial v_j} S_i$ we mean the differential with respect to $v_j$ when fixing all other variables; in local coordinates it is given by the $(N-1)\times(N-1)$ Jacobian matrix, and we write $\Bnorm{\cdot}$ for the associated operator norm.
	\begin{lemma}
		\label{lem:partial_derivatives_coordinates_angle}
		Let $N\geq 3$. For every $i,j\in \{1,\dots, L\}$,
		\begin{equation}
			\Bnorm{\frac{\partial}{\partial \theta_j} S_i}\leq 1,\quad \Bnorm{\frac{\partial}{\partial v_j} S_i}\leq 4.
		\end{equation}
	\end{lemma}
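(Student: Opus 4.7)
The plan is to reduce the bound to the derivatives of a single rotation factor acting on a fixed unit vector. If $j>i$, then $S_i$ is independent of $(\theta_j,v_j)$ and both derivatives vanish, so only the case $j\le i$ requires work. For such $j$, I would write
\[
S_i \;=\; A_j \, R_{v_j,\theta_j}\, y_j, \qquad A_j := R_1\cdots R_{j-1},\qquad y_j := R_{j+1}\cdots R_i\, {\rm e}_1,
\]
with the conventions $A_1 := I$ and $y_i := {\rm e}_1$; both $A_j$ and $y_j$ are independent of $(\theta_j,v_j)$. Because $A_j$ is a product of rotations, left-multiplication by $A_j$ is a Euclidean isometry, and hence the operator norms of $\partial_{\theta_j}S_i$ and $\partial_{v_j}S_i$ coincide with those of $\partial_{\theta_j}(R_{v_j,\theta_j}y_j)$ and $\partial_{v_j}(R_{v_j,\theta_j}y_j)$. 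It therefore suffices to bound $\|\partial_\theta R_{v,\theta} y\|\le 1$ and $\|\partial_v R_{v,\theta} y\|\le 4$ for an arbitrary fixed unit vector $y$, $\theta\in[0,\pi]$, and $v$ on the $(N-2)$-sphere orthogonal to ${\rm e}_1$.

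For the $\theta$-derivative, I would differentiate the closed-form expression for $R_{v,\theta}y$ recalled in the excerpt. The resulting vector lies in $\spansp({\rm e}_1,v)$, and using ${\rm e}_1\perp v$ its squared norm simplifies (via $\sin^2\theta+\cos^2\theta=1$) to $(y\cdot{\rm e}_1)^2+(y\cdot v)^2$, which is at most $\|y\|^2=1$. This gives the first inequality.

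For the $v$-derivative, I would evaluate the differential on an arbitrary unit tangent vector $w$ to the $(N-2)$-sphere at $v$ (so $w\cdot v = w\cdot{\rm e}_1 = 0$) by differentiating along the curve $v(s) = \cos(s)\,v+\sin(s)\,w$ at $s=0$. Applying the product rule to the closed-form expression produces three terms aligned along the orthonormal triple $({\rm e}_1,v,w)$, so the squared norm splits as a sum. Two applications of Cauchy--Schwarz, together with the identity $\sin^2\theta + (\cos\theta-1)^2 = 2(1-\cos\theta) \le 4$, bound the total by $2(1-\cos\theta)\bigl((y\cdot w)^2+(y\cdot{\rm e}_1)^2+(y\cdot v)^2\bigr) \le 4$. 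Taking the supremum over unit tangent vectors $w$ yields $\|\partial_v R_{v,\theta}y\|\le 2 \le 4$, as required. I do not expect any serious obstacle: once the rotation-factorization reduction is in place, everything reduces to elementary orthogonality computations, and the constant $4$ is intentionally loose.
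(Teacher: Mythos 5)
Your proof is correct and follows essentially the same route as the paper: reduce to the derivative of the single factor $R_{v_j,\theta_j}$ acting on a fixed unit vector via the isometry $R_1\cdots R_{j-1}$ (which the paper does implicitly), then differentiate the explicit formula and use orthogonality of ${\rm e}_1,v,w$. Your Cauchy--Schwarz grouping even gives the slightly sharper bound $\norm{\partial_{v_j}S_i}\le 2$, whereas the paper crudely bounds the squared norm by $14$; both are within the stated constant $4$.
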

	\begin{proof}
		If $j>i$, both quantities are $0$ and there is nothing to prove. Otherwise,
		\begin{equation*}
			\Bnorm{\frac{\partial}{\partial \theta_j} S_i}^2 = \Bnorm{\frac{\partial}{\partial \theta_j} R_jx}^2 = (x\cdot {\rm e}_1)^2 + (x\cdot v)^2 \leq 1.
		\end{equation*}where $x = R_{j+1}\dots R_{i}{\rm e}_1$, and we used the fact that ${\rm e}_1, v$ are orthogonal and of norm $1$. Then, for any $x\in \R^N$ and $h$ of norm $1$,
		\begin{multline*}
			\lim_{\epsilon\searrow 0} \frac{1}{\epsilon}(R_{v+\epsilon h,\theta}x - R_{v,\theta}x) =\\
			=-\sin(\theta)x\cdot h{\rm e}_1 + (\cos(\theta)-1)x\cdot h v + \big(\sin(\theta)x\cdot {\rm e}_1 + (\cos(\theta)-1)x\cdot v \big)h,
		\end{multline*} so that for $h$ of norm $1$ in the tangent space of $\{y:\ y\cdot {\rm e}_1 = 0, \norm{y} =1 \}$ at $v_j$, one has (as ${\rm e}_1, v_j, h$ are orthogonal)
		\begin{equation*}
			\Bnorm{\frac{\partial}{\partial v_j} S_i(\theta, v)}^2 =\Bnorm{\frac{\partial}{\partial v_j} R_{v_j,\theta_j}x}^2 \leq 14
		\end{equation*}where again $x = R_{j+1}\dots R_{i}{\rm e}_1$.		It follows that $\|\partial S_i/\partial v_j \|\leq \sqrt {14}\le 4$.
	\end{proof}
	
	The interest of those parametrizations lies in the following identity: the Hamiltonian in \eqref{eq:ham} becomes
	\begin{equation}
		-H^{\free}_L(S(v,\theta)) = \sum_{i=1}^{L-1} (R_1\dots R_{i+1} {\rm e}_1)\cdot (R_1\dots R_i {\rm e}_1) = \sum_{i=1}^{L-1} (R_{i+1} {\rm e}_1) \cdot {\rm e}_1 = \sum_{i=2}^{L} \cos(\theta_i),
	\end{equation}which gives a nice factorisation of the Boltzmann weight. The same identity holds for $N=2$.
	
	Therefore, we have that
	\begin{equation*}
		\int_{\Omega_{L}} f(S)e^{-\beta H^{\free}_L(S)} \nu_L(dS) 
		\propto \int d\theta dv \,f(S(\theta,v)) \sin(\theta_1)^{N-2} \prod_{i=2}^L\sin(\theta_i)^{N-2} e^{\beta\cos(\theta_i)} ,
	\end{equation*}where
	\begin{itemize}
		\item in the case $N\geq 3$, the right-hand-side integral is over $[0,\pi]^L\times (\bbS^{N-2})^L$, and we identified $\{x:\ \norm{x}=1, x\cdot {\rm e}_1 = 1 \}$ with $\bbS^{N-2}$,
		\item in the case $N=2$, the the right-hand-side integral is over $[0,2\pi]^L$ and $v$ is off the picture.
	\end{itemize}
	
	Next, for $N\geq 3$, we introduce the probability measures $\rho_1,\dots,\rho_L$ on $[0,\pi]\times \bbS^{N-2}$ given by 
	\begin{gather*}
		d\rho_1(\theta_1, v_1) \propto \nu(dv_1) \sin(\theta_1)^{N-2} d\theta_1,\\
		d\rho_i(\theta_i,v_i) \propto \nu(dv_i) \sin(\theta_i)^{N-2} e^{\beta \cos(\theta_i)} d\theta_i,\ i=2,\dots,L.
	\end{gather*}For $N=2$, we use instead measures on $[0,2\pi]$ given by
	\begin{gather*}
		d\rho_1(\theta_1) \propto d\theta_1,\\
		d\rho_i(\theta_i) \propto e^{\beta \cos(\theta_i)} d\theta_i,\ i=2,\dots,L.
	\end{gather*}
	With this notation, we rewrite the expected value of some $f:\Omega_L\mapsto\bbR$  with respect to the Gibbs measure $\mu_L^{\free}$ as
	\begin{equation}\label{eq:tensor}
		\mu_L^{\free}(f) = \otimes_{i=1}^L\rho_i
		\big(f(S(\theta,v))\big).
	\end{equation}

	\subsection{Poincar\'e inequalities for increments measures}
	
	For $f:[0,\pi]\times \{x:\ \norm{x}=1, x\cdot {\rm e}_1  = 0\}\to \R$, which maps $(\theta,v)$ to $f(\theta, v)$, denote as before $\frac{\partial}{\partial \theta}$ the partial derivative with respect to $\theta$ and $\frac{\partial}{\partial v}$ the partial derivative with respect to $v$ (so that $\frac{\partial}{\partial v} f(\theta, v)$ is a linear function from the tangent space of $\{x:\ \norm{x}=1, x\cdot {\rm e}_1  = 0\}$ at $v$ to $\R$). We also write ${\rm Var}_{P}(f)$ for the variance of $f$ with respect to the probability measure $P$.
	
	When $N\geq 3$, the measures $\rho_i$ are product measures: the product of the uniform measure on $\{x:\ \norm{x}=1, x\cdot {\rm e}_1  = 0\} \equiv \bbS^{N-2}$ and of a measure on $[0,\pi]$. We start by proving Poincar\'e inequalities for these ``elementary constituents'' which in turn imply Poincar\'e inequalities for the $\rho_i$s. We stress that we do not try to obtain the optimal constants, but we need a reasonable control over their dependency on the parameters.
	
	We first prove the bounds used for $N\geq 3$.
	\begin{lemma}
		\label{lem:Poincare_angles_Ngeq3}
		Let $a,b\geq 0$. Let $P_{a,b}$ be the probability measure on $[0,\pi]$ with density proportional to $\sin(\theta)^{a} e^{b \cos(\theta)}$. Then, for any $f:[0,\pi]\to \R$ smooth,
		\begin{equation*}
			{\rm Var}_{P_{a,b}}(f)\leq c_1(a,b) E_{a,b}\big( |f'|^2\big),
		\end{equation*}where $E_{a,b}$ is the expectation with respect to $P_{a,b}$, and
		\begin{equation*}
			c_1(a,b) = \frac{\pi^3 2^ab^{(a+1)/2}}{\int_{0}^{\sqrt{b}\pi/2}dx x^a e^{ - x^2/2}} \text{ if } b>0,\quad  c_1(a,0) = \frac{\pi^2 2(a+1)4^a}{\pi^a}.
		\end{equation*}Moreover, $c_1(a,b) \leq 8\pi^3 2^{a}b^{(a+1)/2}$ for $b\geq 1$.
	\end{lemma}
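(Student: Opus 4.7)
My plan is to apply the Muckenhoupt criterion for Poincar\'e inequalities on one-dimensional measures: for any probability measure $P$ on $[0,\pi]$ with density $p$ and median $m$, one has
\begin{equation*}
C_P(P) \leq 4 \max\left( \sup_{0 < x < m} P([0,x])\int_x^m \frac{dt}{p(t)},\; \sup_{m < x < \pi} P([x,\pi])\int_m^x \frac{dt}{p(t)} \right).
\end{equation*}
Although for $a \geq 1$ the density $p \propto \sin^a\theta\, e^{b\cos\theta}$ vanishes at the endpoints, so that $\int 1/p$ is not integrable there, the products above remain bounded because $P([0,x])$ and $P([x,\pi])$ vanish faster: near $\theta=0$ one has $P([0,x]) \sim x^{a+1}$ while $\int_x^m dt/p(t) \sim x^{1-a}$ (for $a > 1$), and the product is $O(x^2) \to 0$.

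The first step is to estimate the normalisation $Z_{a,b} = \int_0^\pi \sin^a\theta\, e^{b\cos\theta}\, d\theta$ from below. Restricting to $[0,\pi/2]$ and using the elementary inequalities $\sin\theta \geq 2\theta/\pi$ and $\cos\theta \geq 1 - \theta^2/2$, the substitution $x = \sqrt b\,\theta$ gives
\begin{equation*}
Z_{a,b} \geq \left(\tfrac{2}{\pi}\right)^a e^b\, b^{-(a+1)/2} \int_0^{\sqrt b\pi/2} x^a e^{-x^2/2}\, dx,
\end{equation*}
which explains the appearance of the incomplete Gaussian integral in the denominator of the claimed formula for $c_1(a,b)$. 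The second step is to bound the two Muckenhoupt suprema. For the ``numerator'' integrals I would use the crude bounds $\int_0^x \sin^a s\, e^{b\cos s}\,ds \leq e^b\, x^{a+1}/(a+1)$ and an analogous estimate on $[x,\pi]$; for the ``$1/p$'' integrals I would use $\sin s \geq 2s/\pi$ on $[0,\pi/2]$ together with the trivial bound $e^{-b\cos s}\leq e^{b(s^2/2 - 1)}$ (and a symmetric estimate near $\pi$). Combining everything, the factor $e^{\pm b}$ cancels against $Z_{a,b}^{-1}$, and what survives is precisely the claimed form $\pi^3 2^a b^{(a+1)/2}/\int_0^{\sqrt b \pi/2} x^a e^{-x^2/2}\,dx$ up to the absolute constants that one can absorb in the final step. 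The simplified bound $c_1(a,b) \leq 8\pi^3\,2^a\, b^{(a+1)/2}$ for $b\geq 1$ will then follow from the uniform lower bound $\int_0^{\sqrt b\pi/2} x^a e^{-x^2/2}\,dx \geq 1/8$, which for $b \geq 1$ reduces to $\int_0^{\pi/2} x^a e^{-x^2/2}\,dx \geq 1/8$ and can be checked by minimising the latter over $a\geq 0$ (the minimum is attained at a finite $a^\ast$ and lies well above $1/8$).

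The main technical obstacle is handling the degeneracy of $p$ at the endpoints. Individually, both $1/p$ and $\int dt/p$ blow up near $0$ and $\pi$, and the cancellation with the decay of $P([0,x])$ and $P([x,\pi])$ has to be tracked explicitly. The cleanest way I see is to split the range of the supremum into a ``bulk'' region around the mode $\theta_\ast \approx \sqrt{a/b}$, where the Gaussian approximation obtained via $x=\sqrt b\,\theta$ is accurate, and a ``boundary'' region where one exploits the explicit algebraic decay of $P([0,x])$ versus the integrable singularity of the inner integrand. A secondary nuisance is the implicit dependence on the median $m$, which however can be estimated to lie in a narrow window around $\theta_\ast$ using the lower bound on $Z_{a,b}$ obtained in Step~1. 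The case $b=0$, where the density is simply $\propto\sin^a\theta$, can be treated separately by the same Muckenhoupt bound but without the rescaling, yielding the different constant $c_1(a,0)$ stated in the lemma.
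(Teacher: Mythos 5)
Your overall strategy (the one--dimensional Hardy/Muckenhoupt criterion with the median as splitting point) is a legitimate alternative to what the paper does, and if executed carefully it would in fact give a Poincar\'e constant of the optimal order (about $1/b$ for fixed $a$ and large $b$), which is much smaller than the stated $c_1(a,b)$. The genuine gap is that the decisive quantitative step is only asserted: you never bound the two Muckenhoupt suprema and you never verify that the resulting quantity is dominated by the \emph{specific} constant $c_1(a,b)$ of the lemma; you simply claim that ``what survives is precisely the claimed form up to absolute constants that one can absorb''. Since the lemma pins $c_1(a,b)$ down exactly, there is no slack to absorb unnamed constants into, so this comparison has to be carried out, uniformly in $a,b\ge 0$. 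Moreover your bookkeeping is structurally off: in the product $P([0,x])\int_x^m dt/p(t)$ the normalisation $Z_{a,b}$ cancels identically, so the lower bound on $Z_{a,b}$ from your Step~1 never enters the Muckenhoupt bound at all, and there is no natural mechanism producing the incomplete Gaussian integral $\int_0^{\sqrt b\pi/2}x^ae^{-x^2/2}dx$ in a denominator. What the criterion naturally produces is an expression involving the median, of a different shape, and with the crude intermediate bounds you propose (e.g.\ $\int_0^x\sin^a s\,e^{b\cos s}ds\le e^b x^{a+1}/(a+1)$, which overshoots the true mass by a factor of order $(\pi/2)^a$ when $a$ is large and $b$ moderate) it is not clear, and in some regimes doubtful, that the final bound stays below $c_1(a,b)$. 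Added to this are the unaddressed but fixable points: locating the median, the half-interval $[\pi/2,\pi]$ where $\sin s\ge 2s/\pi$ fails, and the separate treatment of $a<1$ versus $a\ge 1$ at the endpoints.

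For comparison, the paper proves the lemma without any Hardy-type machinery: it writes the variance as a double integral, bounds $(f(x)-f(y))^2$ by $\pi^2\int_0^1|f'(tx+(1-t)y)|^2dt$, uses convexity of $-a\log\sin$ to get $\sin^a(x)\sin^a(y)\le\sin^a(tx+(1-t)y)$ together with $\cos(x)+\cos(y)-\cos(tx+(1-t)y)\le 1$, and changes variables to $z=tx+(1-t)y$. This yields ${\rm Var}_{P_{a,b}}(f)\le \pi^3 e^b C^{-1}E_{a,b}(|f'|^2)$ with $C$ the normalisation, and the stated $c_1(a,b)$ then comes exactly from the lower bound on $C$ that you derive in your Step~1 (this is why the incomplete Gaussian integral appears there and not in a Muckenhoupt computation). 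If you want to keep your route, you must either carry out the two suprema estimates sharply enough to verify domination by $c_1(a,b)$ for all $a,b\ge 0$, or prove a clean bound of whatever form your computation actually gives and then separately prove it is at most $c_1(a,b)$.
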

	\begin{proof}
		Start with $b>0$. Let $P=P_{a,b}$. Set $C= \int_{0}^{\pi} dx \sin^a(x) e^{b\cos(x)}$. Then,
		\begin{align*}
			{\rm Var}_{P}(f) &= \frac{1}{2C^2}\int_{0}^{\pi}dx\int_{0}^{\pi}dy (f(x)-f(y))^2 \sin^a(x)e^{b\cos(x)}\sin^a(y)e^{b\cos(y)}\\
			&\leq \frac{\pi^2}{2C^2}\int_{0}^{\pi}dx\int_{0}^{\pi}dy \int_{0}^1 dt |f'(tx+(1-t)y)|^2 \sin^a(x)e^{b\cos(x)}\sin^a(y)e^{b\cos(y)}.
		\end{align*}We can then use that on $[0,\pi]$, $g(x) = -a\ln(\sin(x))$ is convex and non-negative, therefore
		\begin{equation*}
			g(x) + g(y) \geq g(tx+(1-t) y) + g((1-t)x + t y) \geq g(tx+(1-t) y),
		\end{equation*}and so $\sin^a(x)\sin^a(y) \leq  \sin^a(tx+(1-t) y)$. Also,
		\begin{equation*}
			\cos(x) +\cos(y) -\cos(tx+(1-t)y)\leq 1,
		\end{equation*}as $\cos$ is non-increasing on $[0,\pi]$ and less or equal to $1$. Using these and changing variable to $z=tx+(1-t)y$, we obtain
		\begin{align*}
			&\int_{0}^{\pi}dx\int_{0}^{\pi}dy \int_{0}^1 dt |f'(tx+(1-t)y)|^2 \sin^a(x)e^{b\cos(x)}\sin^a(y)e^{b\cos(y)} \\
			&\leq e^{b}\int_{0}^1 dt \int_{0}^{\pi}\frac{dz}{t}\int_{0}^{\pi}dy  |f'(z)|^2 \sin^a(z) e^{b\cos(z)}\mathbf{1}_{z-(1-t)y\in [0,t\pi]} \\
			&\leq 2\pi e^{b} \int_{0}^{\pi}dz|f'(z)|^2\sin^a(z) e^{b\cos(z)}
		\end{align*}as $\int_{0}^1 dt\frac{1}{t}\int_{0}^{\pi} dy\mathbf{1}_{z-(1-t)y\in[0,t\pi]} \leq 2\pi$.
		Now,
		\begin{equation*}
			C \geq \int_{0}^{\pi}dx \sin^a(x) e^{b - bx^2/2} 
			\geq e^{b}\int_{0}^{\pi/2}dx \frac{x^a}{2^a} e^{ - bx^2/2}
			= \frac{e^{b}}{2^ab^{(a+1)/2}}\int_{0}^{\sqrt{b}\pi/2}dx x^a e^{ - x^2/2},
		\end{equation*}as $ \cos(\gamma) \geq 1-\gamma^2/2 $ for $\gamma\in [0,\pi]$, and $\sin(\gamma)\geq \gamma/2$ for $\gamma\in [0,\pi/2]$. Combining all the estimates gives the main claim. The last point follows from (for $b\geq 1$)
		\begin{equation*}
			\int_{0}^{\sqrt{b}\pi/2}dx x^a e^{ - x^2/2} \geq \int_{1}^{\pi/2}dx e^{ - x^2/2} \geq 1/8.
		\end{equation*} The case $b=0$ follows the exact same path with the lower bound $C= \int_{0}^{\pi} \sin^a(x) \geq \frac{\pi^{a+1}}{2^{2a+1}(a+1) }$.
	\end{proof}

The complete spectrum and eigenfunctions of the spherical Laplacian are known, and in particular its spectral gap is equal $N-1$ (see, e.g.,  \cite[Section 2.2.3]{bakrygentilledoux2014}).
	\begin{lemma}
		\label{lem:Poincare_unif_sphere}
		Let $N\geq 2$. Let $\nu$ be the uniform probability measure on $\bbS^{N-1}$. Then, for any $f:\bbS^{N-1}\to \R$ smooth,
		\begin{equation*}
			{\rm Var}_{\nu}(f)\leq c_{2}(N) \nu\big( \norm{Df}^2\big),
		\end{equation*}
		with  $c_{2}(N)=\frac{1}{N-1}$.
	\end{lemma}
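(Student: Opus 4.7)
The plan is to use the spectral decomposition of the Laplace-Beltrami operator $\Delta_{\bbS^{N-1}}$ on the round sphere, and to identify the smallest non-zero eigenvalue. I would first record the standard fact that $-\Delta_{\bbS^{N-1}}$ has pure point spectrum, with eigenvalues $\lambda_k = k(k+N-2)$ for $k = 0, 1, 2, \ldots$, whose eigenspaces are the spaces of degree-$k$ spherical harmonics (references: \cite{bakrygentilledoux2014} as cited). The null eigenspace is spanned by the constants, so the smallest positive eigenvalue is $\lambda_1 = N-1$, realized on the space of degree-one spherical harmonics (restrictions of linear functions $x\mapsto a\cdot x$ to $\bbS^{N-1}$).

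Given this, the Poincaré inequality follows by the standard spectral argument. Writing a smooth $f$ as $f = \nu(f) + \sum_{k\geq 1} f_k$ where $f_k$ is the $L^2(\nu)$-projection onto the $k$-th spherical harmonic eigenspace, one has by orthogonality
\begin{equation*}
\mathrm{Var}_\nu(f) = \sum_{k\geq 1} \norm{f_k}_{L^2(\nu)}^2,
\end{equation*}
while integration by parts on the closed manifold $\bbS^{N-1}$ gives
\begin{equation*}
\nu(\norm{Df}^2) = -\nu(f\,\Delta_{\bbS^{N-1}} f) = \sum_{k\geq 1} \lambda_k \norm{f_k}_{L^2(\nu)}^2 \geq (N-1) \sum_{k\geq 1} \norm{f_k}_{L^2(\nu)}^2.
\end{equation*}
Combining these two displays yields $\mathrm{Var}_\nu(f) \leq \frac{1}{N-1}\nu(\norm{Df}^2)$, which is the claim.

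Since this is a textbook result I would not redo any of the eigenvalue computation, but to convince oneself that $\lambda_1 = N-1$ directly (without quoting the full formula for $\lambda_k$), one can note that for a unit vector $a\in\bbR^N$, the function $f(x) = a\cdot x$ on the sphere satisfies $\Delta_{\bbS^{N-1}} f = -(N-1) f$, as follows from the decomposition $\Delta_{\bbR^N} = \partial_r^2 + \frac{N-1}{r}\partial_r + \frac{1}{r^2}\Delta_{\bbS^{N-1}}$ applied to the harmonic polynomial $a\cdot x$. This shows the gap is at most $N-1$; the matching lower bound comes from the fact that $\lambda_k = k(k+N-2)$ is increasing in $k$. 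There is no genuine obstacle here — the lemma is really a bookkeeping statement — so the only care needed is to make sure the normalization is for the probability measure $\nu$ and for the intrinsic gradient $D$ on the sphere, which is exactly what \cite{bakrygentilledoux2014} provides.
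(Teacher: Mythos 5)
Your proof is correct and follows essentially the same route as the paper, which simply cites the known spectral gap $N-1$ of the spherical Laplacian from \cite[Section 2.2.3]{bakrygentilledoux2014} and states the lemma without further argument; your spectral decomposition and the computation $\Delta_{\bbS^{N-1}}(a\cdot x)=-(N-1)(a\cdot x)$ are precisely the standard facts behind that citation.
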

	
	The last bound is for the XY model case ($N=2$).
	\begin{lemma}
		\label{lem:Poincare_angles_XY}
		Let $b\geq 0$. Let $P_{b}$ be the probability measure on $[0,2\pi]$ with density proportional to $e^{b \cos(\theta)}$. Then, for any smooth $2\pi$-periodic $f:\R\to \R$,
		\begin{equation*}
			{\rm Var}_{P_{b}}(f)\leq c_{3}(b) E_{b}\big( |f'|^2\big),
		\end{equation*}where $E_{b}$ is the expectation with respect to $P_{b}$, and
		\begin{equation*}
			c_{3}(b) = \frac{\pi^3 \sqrt{b}}{ \int_{-\sqrt{b}\pi}^{\sqrt{b} \pi} e^{-x^2/2}} \text{ if } b>0,\quad  c_{3}(0) = \frac{\pi^2 }{2}.
		\end{equation*}Moreover, $c_{3}(b) \leq 2\pi^3 \sqrt{b}$ for $b\geq 1$.
	\end{lemma}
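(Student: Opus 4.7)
The plan is to adapt the template of Lemma~\ref{lem:Poincare_angles_Ngeq3} (specialized to $a=0$) to the circle $S^1 = \R/(2\pi\Z)$, exploiting the $2\pi$-periodicity of $f$. Write the variance as
\[
{\rm Var}_{P_b}(f) = \frac{1}{2C^2}\iint_{S^1\times S^1} (f(\theta_1) - f(\theta_2))^2\, e^{b(\cos\theta_1 + \cos\theta_2)}\,d\theta_1\,d\theta_2,
\]
where $C = \int_{-\pi}^{\pi} e^{b\cos\theta}\,d\theta$. For each pair $(\theta_1, \theta_2)$, pick lifts $\tilde\theta_1, \tilde\theta_2 \in \R$ with $|\tilde\theta_1 - \tilde\theta_2| \leq \pi$ (i.e.\ the short arc) and set $\gamma(t) = (1-t)\tilde\theta_1 + t\tilde\theta_2$; Cauchy--Schwarz yields $(f(\theta_1) - f(\theta_2))^2 \leq \pi^2\int_0^1|f'(\gamma(t))|^2\,dt$.

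The key technical ingredient is the cosine inequality $\cos\tilde\theta_1 + \cos\tilde\theta_2 \leq 1 + \cos\gamma(t)$, valid whenever $|\tilde\theta_1 - \tilde\theta_2| \leq \pi$. To prove it, set $m = (\tilde\theta_1 + \tilde\theta_2)/2$, $d = (\tilde\theta_1 - \tilde\theta_2)/2$, $u = (1-2t)d$, so that $|u| \leq |d| \leq \pi/2$ and $\gamma(t) = m + u$. Expanding gives
\[
1 + \cos(m+u) - \cos(m+d) - \cos(m-d) = 1 + A\cos m - B\sin m
\]
with $A = \cos u - 2\cos d$ and $B = \sin u$. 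The minimum over $m$ equals $1 - \sqrt{A^2 + B^2}$, and a direct computation shows
\[
A^2 + B^2 = 1 + 4\cos d(\cos d - \cos u) \leq 1,
\]
since $\cos d \geq 0$ and $\cos u \geq \cos d$ (using $|u| \leq |d| \leq \pi/2$).

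With the cosine inequality in hand, the density factor satisfies $e^{b(\cos\tilde\theta_1 + \cos\tilde\theta_2)} \leq e^b\,e^{b\cos\gamma(t)}$. I then change variables $(\theta_1, \theta_2) \mapsto (\theta_1, \delta)$ with $\delta = \tilde\theta_2 - \tilde\theta_1 \in [-\pi, \pi]$, then, for fixed $t$, $(\theta_1, \delta) \mapsto (\gamma, \delta)$ with $\gamma = \theta_1 + t\delta \in S^1$; both maps are Lebesgue-preserving on $S^1\times S^1$. The resulting integrand $|f'(\gamma)|^2 e^{b\cos\gamma}$ is independent of $\delta$, so the $\delta$-integration contributes a factor $2\pi$, giving
\[
{\rm Var}_{P_b}(f) \leq \frac{\pi^2 e^b}{2C^2}\cdot 2\pi\,C\,E_b(|f'|^2) = \frac{\pi^3 e^b}{C}\,E_b(|f'|^2).
\]
For $b = 0$, $C = 2\pi$ yields $c_3(0) = \pi^2/2$; for $b > 0$, the Gaussian bound $\cos\theta \geq 1 - \theta^2/2$ on $[-\pi,\pi]$ (which holds there since the Taylor remainder is non-negative) produces $C \geq (e^b/\sqrt{b})\int_{-\sqrt{b}\pi}^{\sqrt{b}\pi} e^{-u^2/2}\,du$, giving the stated $c_3(b)$; and for $b \geq 1$ the denominator exceeds $\int_{-1}^{1} e^{-u^2/2}\,du > 1$, so $c_3(b) \leq \pi^3\sqrt{b} \leq 2\pi^3\sqrt{b}$. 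The main obstacle is the cosine inequality, since the monotonicity argument of Lemma~\ref{lem:Poincare_angles_Ngeq3} is unavailable on the circle; the product-to-sum identity plus the elementary bound on $A^2 + B^2$ is the clean substitute.
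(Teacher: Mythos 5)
Your proof is correct and follows essentially the same route as the paper: the same two-point representation of the variance, Cauchy--Schwarz along the short arc with the factor $\pi^2$, the inequality $\cos\theta_1+\cos\theta_2-\cos\gamma(t)\le 1$, the measure-preserving change of variables giving the factor $2\pi$, and the Gaussian lower bound on the normalization $C$. The only difference is that you supply a full proof of the cosine inequality (via the bound $A^2+B^2\le 1$), which the paper simply asserts from $y\in[-\pi,\pi]$; your verification of it is valid.
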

	\begin{proof}
		Let $b>0$, $C= \int_{0}^{2\pi}dx e^{b\cos(x)}$. One has
		\begin{align*}
			{\rm Var}_{P_b}(f) &= \frac{1}{2C^2} \int_{0}^{2\pi}dx\int_{-\pi}^{\pi}dy(f(x)- f(x+y))^2 e^{ b(\cos(x)+\cos(x+y))}\\
			&\leq \frac{\pi^2}{2C^2} \int_{0}^1 dt \int_{0}^{2\pi}dx\int_{-\pi}^{\pi}dy|f'(x+ty)|^2 e^{ b(\cos(x)+\cos(x+y))}
		\end{align*}(we simply shifted the integration domain of $y$ to the full period $[x-\pi,x+\pi]$ which preserves the integral by periodicity). We can then use
		\begin{equation*}
			\cos(x)+\cos(x+y) -\cos(x+ty)\leq 1
		\end{equation*}as $y\in [-\pi,\pi]$, and proceed as in the proof of Lemma~\ref{lem:Poincare_angles_Ngeq3} to obtain
		\begin{align*}
			{\rm Var}_{P_b}(f) &\leq \frac{\pi^2 e^b}{2C^2} \int_{0}^1 dt \int_{0}^{2\pi}dx\int_{-\pi}^{\pi}dy|f'(x+ty)|^2 e^{ b \cos(x+ty)}\\
			&= \frac{2\pi^3 e^b}{2C^2} \int_{0}^{2\pi}dx |f'(x)|^2 e^{ b \cos(x)} = \frac{\pi^3 e^b}{C} E_b(|f'|^2)
		\end{align*}where we used periodicity in the second line. Now, as in the proof of Lemma~\ref{lem:Poincare_angles_Ngeq3}, $C \geq \int_{-\pi}^{\pi} e^{b-bx^2/2} = \frac{e^b}{\sqrt{b}} \int_{-\sqrt{b}\pi}^{\sqrt{b} \pi} e^{-x^2/2} $, which gives the claim. A simplified version of the above treats the case $b=0$ (there, $C=2\pi$).
	\end{proof}
	
	We end this section by noticing that we have everything we need to control $\mu_L^{\free} = \otimes_{i=1}^L\rho_i$, as the $\rho_i$s are themselves product of one or two of the above cases. More precisely, for $f:[0,\pi]\times \bbS^{N-2}\to \R$ smooth ($N\geq 3$), $i\geq 2$, and $(\theta,v)\sim \rho_i$, by the tensorization property of variance,
	\begin{align}
		\label{eq:tensor_var_rho}
		\nonumber{\rm Var}_{\rho_i}(f) 
		&\leq P_{N-2,\beta}\otimes \nu \big({\rm Var}_{\nu}(f(\theta,v)) + {\rm Var}_{P_{N-2,\beta}}(f(\theta,v)) \big)\\
		\nonumber&\leq P_{N-2,\beta}\otimes \nu \big(c_2(N-2)\nu(\norm{\partial_v f(\theta,v)}^2 ) + c_1(N-2,\beta)E_{N-2,\beta}(|\partial_{\theta}f(\theta,v)|^2) \big)\\
		&= c_2(N-2)\rho_i \big(\norm{\partial_v f(\theta,v)}^2 \big) +  c_1(N-2,\beta) \rho_i \big(|\partial_{\theta}f(\theta,v)|^2 \big),
	\end{align}where $P_{a,b}$ is the measure of Lemma~\ref{lem:Poincare_angles_Ngeq3}. A similar bound holds for $i=1$.
	
	\subsection{Proof of Theorem \ref{th:free} }

	Introduce
	\begin{equation}
		\label{eq:c_beta_N_def}
		c(\beta,N) = \begin{cases}
			\max(c_3(\beta),c_3(0)) & \text{ if } N=2,\\
			\max(c_1(N-2,\beta), c_1(N-2,0), c_2(N-2)) & \text{ if } N\geq 3.
		\end{cases}
	\end{equation}By Lemmas~\ref{lem:Poincare_angles_Ngeq3},~\ref{lem:Poincare_unif_sphere}, and~\ref{lem:Poincare_angles_XY}, one has that for $\beta$ larger than $1$,
	\begin{equation}
		\label{eq:asymp_c_beta_N}
		c(\beta,N)\leq \pi^32^{N+1}\beta^{(N-1)/2}.
	\end{equation}
	
	The upper bound in Theorem \ref{th:free} is a consequence of the following Lemma and of~\eqref{eq:asymp_c_beta_N}.
	\begin{lemma}
		Let $f:\Omega_L\to \R$ be smooth. Then,
		\begin{equation*}
			{\rm Var}_{\mu_L^{\free}}(f)\leq \beta \,17 c(\beta,N)
			L^2 
			\,\cD^{\free}_{L,\beta}(f,f)\,, 
		\end{equation*}
		where $c(\beta,N)$ is given by~\eqref{eq:c_beta_N_def}.
	\end{lemma}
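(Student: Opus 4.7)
The plan is to exploit the product structure of $\mu_L^{\free}$ in the $(\theta,v)$ coordinates, then pay a factor of $L^2$ when translating back to the spin gradients $D_k f$. Concretely, start from~\eqref{eq:tensor} which writes $\mu_L^{\free}=\otimes_{i=1}^L\rho_i$, so that by the tensorization property of variance,
\begin{equation*}
\mathrm{Var}_{\mu_L^{\free}}(f)\;\leq\;\sum_{i=1}^L \mu_L^{\free}\!\left[\mathrm{Var}_{\rho_i}(f)\right],
\end{equation*}
where the inner variance is taken with all other coordinates frozen. For $N\geq 3$, apply~\eqref{eq:tensor_var_rho} (combined with Lemmas~\ref{lem:Poincare_angles_Ngeq3} and~\ref{lem:Poincare_unif_sphere}) and for $N=2$ apply Lemma~\ref{lem:Poincare_angles_XY}; in either case we get
\begin{equation*}
\mathrm{Var}_{\rho_i}(f)\;\leq\; c(\beta,N)\,\rho_i\!\left(\,\babs{\partial_{\theta_i}f}^2+\norm{\partial_{v_i}f}^2\,\right),
\end{equation*}
(with the $v$-term absent when $N=2$).

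The next step is to convert each $\partial_{\theta_i}f$ and $\partial_{v_i}f$ into an expression involving the intrinsic spherical gradients $D_k f$. By the chain rule applied to $f(S_1(\theta,v),\dots,S_L(\theta,v))$, using that $\partial_{\theta_i}S_k=0$ and $\partial_{v_i}S_k=0$ for $k<i$,
\begin{equation*}
\partial_{\theta_i}f\;=\;\sum_{k=i}^L (\partial_{\theta_i}S_k)\cdot D_k f,\qquad
\partial_{v_i}f\;=\;\sum_{k=i}^L (\partial_{v_i}S_k)^{*}\,D_k f.
\end{equation*}
Applying Cauchy--Schwarz to the sums of at most $L$ terms, followed by the uniform bounds $\norm{\partial_{\theta_i}S_k}\leq 1$ and $\norm{\partial_{v_i}S_k}\leq 4$ provided by Lemma~\ref{lem:partial_derivatives_coordinates_angle} (and the trivial bound for $N=2$ where the analogous derivative has norm $1$), one obtains
\begin{equation*}
\babs{\partial_{\theta_i}f}^2+\norm{\partial_{v_i}f}^2\;\leq\; 17\,L\sum_{k=i}^L\norm{D_k f}^2.
\end{equation*}

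Summing this bound over $i=1,\dots,L$ and swapping the order of summation yields $\sum_i\sum_{k\geq i}\norm{D_k f}^2=\sum_k k\,\norm{D_k f}^2\leq L\sum_k\norm{D_k f}^2$, producing a second factor of $L$. Integrating against $\mu_L^{\free}$ and recognizing $\frac1\beta\sum_k\mu_L^{\free}(\norm{D_k f}^2)=\cD^{\free}_{L,\beta}(f,f)$ gives
\begin{equation*}
\mathrm{Var}_{\mu_L^{\free}}(f)\;\leq\; 17\,c(\beta,N)\,L^2\,\beta\,\cD^{\free}_{L,\beta}(f,f),
\end{equation*}
which is the claimed bound. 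The only delicate point is the chain-rule step: one must be careful that $\partial_{v_i}S_k$ is interpreted as a linear map between the appropriate tangent spaces so that its operator norm — controlled by Lemma~\ref{lem:partial_derivatives_coordinates_angle} — is the right quantity to pair with $\norm{D_k f}$ via Cauchy--Schwarz. Apart from this, the argument is a clean tensorize-and-change-variables computation, and the $L^2$ loss is an unavoidable consequence of the triangular dependence $S_k=R_1\cdots R_k\,\mathrm{e}_1$ in the chosen coordinates.
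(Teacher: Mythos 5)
Your proof is correct and follows essentially the same route as the paper: tensorization of variance over the product measure $\otimes_i\rho_i$, the one-site Poincar\'e inequalities giving the factor $c(\beta,N)$, the chain rule with the derivative bounds $\norm{\partial_{\theta_i}S_k}\le 1$, $\norm{\partial_{v_i}S_k}\le 4$ from Lemma~\ref{lem:partial_derivatives_coordinates_angle}, and Cauchy--Schwarz producing the $17L^2$ factor before recognizing the Dirichlet form with its $1/\beta$ normalization. No gaps to report.
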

	\begin{proof}
		Since $\mu_L^{\free}$ is the product measure \eqref{eq:tensor}, we have that by the tensorization of variance,
		\begin{equation*}
			{\rm Var}_{\mu_L^{\free}}(f) \leq \otimes_{i=1}^L\rho_i\Big(\sum_{i=1}^L {\rm Var}_{\rho_i}\big(f(\theta_1,v_1,\dots,\theta_L,v_L) \big) \Big).
		\end{equation*}Applying~\eqref{eq:tensor_var_rho}, one obtains
		\begin{equation}
			\label{eq:proof:freeBC:eq1}
			{\rm Var}_{\mu_L^{\free}}(f) \leq c(\beta,N)\otimes_{i=1}^L\rho_i\Big(\sum_{i=1}^L\big( \norm{\partial_{v_i}f}^2 + |\partial_{\theta_i} f|^2 \big)\Big).
		\end{equation}
		
		Moreover, by the chain rule, for $f:(\bbS^{N-1})^L\to \R$ smooth, one has
		\begin{equation*}
			\partial_{v_i} f\big(S(\theta,v)\big) = \sum_{j = i}^L D_jf(S(\theta)) \cdot 
			\partial_{v_i} S_j(\theta_1,v_1,\dots,\theta_j,v_j).
		\end{equation*}and
		\begin{equation*}
			\partial_{\theta_i} f\big(S(\theta,v)\big) = \sum_{j = i}^L D_jf(S(\theta)) \cdot 
			\partial_{\theta_i}S_j(\theta_1,v_1,\dots,\theta_j,v_j).
		\end{equation*}
		Observe that by our choice of coordinates (Lemma~\ref{lem:partial_derivatives_coordinates_angle}), for any $L\geq j\geq i\geq 1$,
		\begin{equation*}
				\norm{ \partial_{\theta_i}S_j(\theta_1,v_1,\dots, \theta_L,v_L) } \leq 1,\quad 
				\norm{ \partial_{v_i} S_j(\theta_1,\dots, \theta_L,v_L) }\leq 4.
		\end{equation*}
		Therefore, by Cauchy-Schwarz,
		\begin{align*}
			\sum_{i=1}^L\big( \norm{\partial_{v_i}f}^2 + |\partial_{\theta_i} f|^2 \big)&\leq \sum_{i=1}^L\Big( 16L\sum_{j=i}^L\norm{ D_jf}^2 + L\sum_{j=i}^L \norm{D_jf}^2 \Big)\\ & 
			\leq 17 L^2 \sum_{i=1}^L\norm{ D_if}^2 .
		\end{align*}
		Plugging this in~\eqref{eq:proof:freeBC:eq1}, and recalling~\eqref{eq:dir} concludes the proof.
	\end{proof}

	\section{$XY$ model on the cycle}
	In this section we prove Theorem \ref{th:periodic2}. We start with the proof of the upper bound. 
	
	\subsection{Upper bound}
	The proof is based on the upper bound for free boundary conditions in Theorem \ref{th:free} and simple comparison between the free and the periodic boundary condition system. The first observation is that from the definitions \eqref{eq:ham}-\eqref{eq:gibbs} and the fact that $|S_i\cdot S_{i+1}|\leq 1$ it follows that the relative densities $\dd\mu_L^{\free}/\dd\mu_L^{\periodic},\dd\mu_L^{\periodic}/\dd\mu_L^{\free} $ satisfy
	\begin{equation}\label{eq:relative}
		\left\|\frac{\dd\mu_L^{\free}}{\dd\mu_L^{\periodic}}\right\|_\infty\le \frac{Z^{\periodic}_{L,\beta}}{Z^{\free}_{L,\beta}}\,\exp\(\beta\),\qquad
		\left\|\frac{\dd\mu_L^{\periodic}}{\dd\mu_L^{\free}}\right\|_\infty\le \frac{Z^{\free}_{L,\beta}}{Z^{\periodic}_{L,\beta}}\,\exp\(\beta\).
	\end{equation}
	At this point, for any smooth function $f:\Omega_L\to \R$ one has 
	\begin{align*}
		{\rm Var}_{\mu_L^{\periodic}}(f)&=\inf_{c\in \bbR} 	\mu_L^{\periodic}((f - c)^2)
		\leq \mu_L^{\periodic}((f - \mu_L^{\free}(f))^2)\\&\leq  \frac{Z^{\free}_{L,\beta}}{Z^{\periodic}_{L,\beta}}\,\exp\(\beta\)\mu_L^{\free}((f - \mu_L^{\free}(f))^2)= \frac{Z^{\free}_{L,\beta}}{Z^{\periodic}_{L,\beta}}\,\exp\(\beta\){\rm Var}_{\mu_L^{\free}}(f).
	\end{align*}
	Similarly, 
	\begin{align*}
		\cD^{\free}_{L,\beta}(f,f)\leq \frac{Z^{\periodic}_{L,\beta}}{Z^{\free}_{L,\beta}}\,\exp\(\beta\)\cD^{\periodic}_{L,\beta}(f,f).
	\end{align*}
	Therefore, from Theorem \ref{th:free} it follows that
	\begin{align*}
		{\rm Var}_{\mu_L^{\periodic}}(f)\leq 
		e^{2\beta}C(2)L^2\beta^{3/2} 
		\cD^{\periodic}_{L,\beta}(f,f).
	\end{align*}
	This implies the upper bound \eqref{eq:trelper2}.
	
	\subsection{Lower  bound}\label{sec:metastab}
	We use the variational principle \eqref{eq:varprin}. In order to construct an appropriate test function, we follow \cite{cosco2021topologically}, where a proxy for the winding number of the spin chain was defined as follows. 
	Let $S_i\in\bbS^1$ denote the $i$-th spin, with $S_{L+1}=S_1$, and write  $[\theta]$ for the representative in the interval $(-\pi,\pi]$ of any $\theta\in \bbS^1$. 
	If $S\in(\bbS^1)^L$ is such that $S_{i+1} - S_{i}\in\bbS^1\setminus\{\pi\}$ for all $i=1,\dots,L$, define the function 
	\begin{align}\label{eq:wind}
		\cW(S) = \frac1{2\pi}\sum_{i=1}^L [S_{i+1} - S_{i}].
	\end{align}
	Because of the periodic boundary condition, $\cW$ is an integer, which can be interpreted as the winding number of the spin configuration. 
	Moreover, the function $\cW$ is continuous in its domain of definition $D$ given by 
	\begin{gather}\label{eq:adb}
		D = \left\{	S\in(\bbS^1)^L: \; S_{i+1} - S_{i}\in\bbS^1\setminus\{\pi\}, \;	i=1,\dots,L\right\}\,.
	\end{gather}
	Next define the events
	\begin{gather}\label{eq:adb2}
		B = \{S\in D:\;\cW(S)=0\}\,,\\
		A_\delta = \{	S\in D: \; \cW(S)=0\; \,\text{and } S_{i}-S_{i+1}\in[\pi-\delta,\pi+\delta]\text{ for some }i \}\,.
	\end{gather}
	We let $f: (\bbS^1)^L\mapsto\bbR$ denote a $\cC^\infty$ function such that $f=0$ on $B^c$, $f=1$ on $B\setminus A_\delta$ and such that $\|D_i f\|_\infty \leq C_\delta$, where $C_\delta=O(1/\delta)$ is a constant independent of $\beta, L$.  Since $ D_i f(S)\neq 0$ implies that $S\in A_\delta$, using this function $f$ in the variational principle \eqref{eq:varprin} one obtains
	\begin{gather}\label{eq:varprin21}
		\gap^{\periodic}_{L,\beta}  \leq \frac1\beta\,C_\delta^2 L\,
		\frac{\mu^{\periodic}_{L,\beta}(A_\delta)}{{\rm Var}^{\periodic}_{L,\beta} f}.
	\end{gather}
		Moreover, by definition of $f$ one has 
	\begin{gather}\label{eq:varprin22}
		{\rm Var}^{\periodic}_{L,\beta} f
		\geq \mu^{\periodic}_{L,\beta}(B\setminus A_\delta)\mu^{\periodic}_{L,\beta}(B^c).
	\end{gather}
	It remains to estimate $\mu^{\periodic}_{L,\beta}(A_\delta)$ and the probabilities in \eqref{eq:varprin22}.
	
	For $\delta>0$, define the events 
	\begin{align*}
		B_{\delta}^{0} & =\left\{ S_{i}-S_{i}^{0}\in[-\delta,\delta]\text{ for all }i=1,\dots,L\right\} ,\\
		B_{\delta}^{1} & =\left\{ S_{i}-S_{i}^{1}\in[-\delta,\delta]\text{ for all }i=1,\dots,L\right\} ,
	\end{align*}
	where the configurations $S^{0}, S^{1}$, seen as variables in the complex plain, are defined by
	\begin{align*}
		S_{j}^{0}  =1\,, \qquad S_{j}^{1}  =e^{2\pi i\frac{j}{L}}\,, \qquad j\in\{1,\dots L\}.
	\end{align*}
	Observe that, for small enough $\delta=\delta(L)$ depending on $L$, 
	\[
	B_{\delta}^{0}\in\{\mathcal{W}=0\}\text{ and }B_{\delta}^{1}\in\{\mathcal{W}=1\},
	\]
	and, writing $H(S):=
	H^{\periodic}_L(S)$,  
	\begin{align*}
		H(S)
		 & \ge2-\delta^{2}-L,\qquad S\in A_{\delta},\\
		H(S) & \le-(1-\delta^{2})L,\qquad S\in B_{\delta}^{0},\\
		H(S) & \le-L+C(\delta+L^{-1}) + C\delta^2 L\,,
		\qquad S\in B_{\delta}^{1},
	\end{align*}
	for some absolute constant $C>0$.
	With the notation $\text{vol}(A)=\int_A \dd S$ for any $A\subset \Omega$, $\Omega=\Omega_L$, we obtain 
	\[
	\text{vol}(B_{\delta}^{0})e^{\beta(1-\delta^{2})L}\le\int_\Omega e^{-\beta H(S)}\dd S\le\text{vol}(\Omega)e^{\beta L}.
	\]
	Therefore, writing $\mu=\mu^{\periodic}_{L,\beta}$, 
	\begin{align*}
		\mu(A_{\delta})&=\frac{\int_{A_{\delta}}e^{-\beta H(S)}\dd S}{\int_{\Omega}e^{-\beta H(S)}\dd S}\le\frac{\text{vol}(A_{\delta})e^{-\beta\left(2-\delta^{2}-L\right)}}{\text{vol}(B_{\delta}^{0})e^{\beta(1-\delta^{2})L}}\\
		&=\frac{\text{vol}(A_{\delta})}{\text{vol}(B_{\delta}^{0})}\,e^{-\beta\left(2-\delta^{2}(L+1)\right)}=C_{\delta,L}\,e^{-\beta\left(2-\delta^{2}(L+1)\right)},
	\end{align*}
	where $C_{\delta,L}$ is a constant depending on $\delta,L$.
	We can also estimate 
	\[
	\mu(B_{\delta}^{0})=\frac{\int_{B_{\delta}^{0}}e^{-\beta H(S)}\dd S}{\int_{\Omega}e^{-\beta H(S)}\dd S}\ge\frac{\text{vol}(B_{\delta}^{0})e^{\beta(1-\delta^{2})L}}{\text{vol}(\Omega)e^{\beta L}}=c_{\delta,L}e^{-\delta^{2}L\beta},
	\]
	where $c_{\delta,L}$ is another constant depending on $\delta,L$.
	Finally, 
	\[
	\mu(B_{\delta}^{1})=\frac{\int_{B_{\delta}^{1}}e^{-\beta H(S)}\dd S}{\int_{\Omega}e^{-\beta H(S)}\dd S}\ge\frac{\text{vol}(B_{\delta}^{1})e^{\beta\left(L-C(\delta+L^{-1}) - C\delta^2 L
	\right)}}{\text{vol}(\Omega)e^{\beta L}},
	\]
	which implies
	\[
	\mu(B_{\delta}^{1})\ge c_{\delta,L}e^{-\left(C(\delta+L^{-1}) + C\delta^2 L
	\right)\beta},
	\]
	for some other constant $c_{\delta,L}$ depending on $\delta,L$. Summarizing, we obtain the following estimate.  

	\begin{proposition}\label{prop:crude}
		For all $L\in\bbN$, $\delta=\delta(L)\le L^{-1}$, for all $\beta\ge1$,
		\begin{equation}\label{eq:bottleneck}
			\frac{\mu(A_{\delta})}{\mu^{\periodic}_{L,\beta}(B\setminus A_\delta)\mu^{\periodic}_{L,\beta}(B^c)}
			\le\frac{\mu(A_{\delta})}{\mu(B_{\delta}^{0})\mu(B_{\delta}^{1})}\le C_{\delta,L}e^{-(2-C_0L^{-1}
			)\beta},
		\end{equation}
		where $C_{\delta,L}$ is a constant depending on $\delta,L$, and $C_0$ is an absolute constant.
	\end{proposition}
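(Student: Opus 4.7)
The plan is to exploit the fact that topological metastability in this model is encoded in the energetic cost of the bottleneck event $A_\delta$ relative to the low-energy reference configurations $S^0$ (all spins aligned) and $S^1$ (uniformly winding), which represent the homotopy classes $\cW=0$ and $\cW=1$. The idea is to localize around these references using small balls $B_\delta^0,B_\delta^1$, bound the Hamiltonian explicitly on each of $A_\delta,B_\delta^0,B_\delta^1$, and then form ratios so that the normalization $Z^{\periodic}_{L,\beta}$ cancels.

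For the first inequality, I would first check that when $\delta\le L^{-1}$ one has $B_\delta^0\subset B\setminus A_\delta$ (all increments lie within $2\delta$ of $0$, hence far from $\pm\pi$, and $\cW=0$ follows from continuity of $\cW$ on $D$) and $B_\delta^1\subset\{\cW=1\}\subset B^c$. Monotonicity of $\mu$ then gives $\mu(B\setminus A_\delta)\ge\mu(B_\delta^0)$ and $\mu(B^c)\ge\mu(B_\delta^1)$, and the first inequality follows immediately.

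The core of the argument is the pointwise Hamiltonian estimates. On $B_\delta^0$ every scalar product satisfies $S_i\cdot S_{i+1}\ge\cos(2\delta)\ge 1-2\delta^2$, so $H\le -(1-\delta^2)L$. On $B_\delta^1$, Taylor expanding the reference value $\cos(2\pi/L)$ about $1$ and tracking the $O(\delta)$ per-bond perturbation yields $H\le -L+C(\delta+L^{-1})+C\delta^2 L$. On $A_\delta$, by definition at least one bond satisfies $-S_i\cdot S_{i+1}\ge\cos\delta\ge 1-\delta^2/2$, i.e.\ contributes $\ge 1-\delta^2/2$ to $H$, while the remaining $L-1$ bonds contribute at least $-1$ each, giving $H\ge 2-L-\delta^2$. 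Combining these with $Z^{\periodic}_{L,\beta}\ge\mathrm{vol}(B_\delta^0)\,e^{\beta(1-\delta^2)L}$ to get the upper bound on $\mu(A_\delta)$, and with the crude $Z^{\periodic}_{L,\beta}\le\mathrm{vol}(\Omega)\,e^{\beta L}$ to get the lower bounds on $\mu(B_\delta^0)$ and $\mu(B_\delta^1)$, produces all three probability estimates needed.

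The main subtlety is tracking the $L$-dependence in the exponent. The leading $e^{-2\beta}$ is immediate from the bottleneck bond in $A_\delta$, but the subleading $C_0 L^{-1}$ correction emerges only because the upper bound on $\mu(A_\delta)$ is paired against the lower bound on $\mu(B_\delta^0)$, so that the ``entropic'' terms of order $\delta^2 L$ appearing on both sides cancel and leave a residue of order $L^{-1}$ once $\delta\le L^{-1}$. Had we instead used the normalisation $Z\le\mathrm{vol}(\Omega)e^{\beta L}$ in the upper bound on $\mu(A_\delta)$, we would have only obtained $e^{-(2-O(1))\beta}$. All remaining $L$- and $\delta$-dependent prefactors are harmlessly absorbed into the constant $C_{\delta,L}$.
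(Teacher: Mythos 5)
Your argument is correct and is essentially identical to the paper's proof: the same reference events $B_\delta^0,B_\delta^1$ with the same inclusions into $B\setminus A_\delta$ and $B^c$, the same pointwise Hamiltonian bounds on $A_\delta$, $B_\delta^0$, $B_\delta^1$, the same two-sided control of $Z^{\periodic}_{L,\beta}$, and the same bookkeeping showing all correction terms are $O(L^{-1})$ in the exponent once $\delta\le L^{-1}$. Only cosmetic remarks: your per-bond estimate $\cos(2\delta)\ge 1-2\delta^2$ gives $H\le-(1-2\delta^2)L$ rather than $-(1-\delta^2)L$ (immaterial, as the constant is absorbed into $C_0L^{-1}$), and the aside about using $Z\le\mathrm{vol}(\Omega)e^{\beta L}$ ``in the upper bound on $\mu(A_\delta)$'' is misstated since that step requires a lower bound on $Z$.
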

	The lower bound in \eqref{eq:trelper3} now follows from \eqref{eq:varprin21}-\eqref{eq:varprin22} and Proposition \ref{prop:crude}.

	\begin{remark}[A finer lower bound for $\beta\ge C\log L$]\label{rem:metastab}
		The estimate in Proposition \ref{prop:crude} captures the correct energy barrier of size $2\beta$ up to $O(L^{-1})$ corrections, see the matching upper bound \eqref{eq:trelper2}. However, it provides no quantitative estimate in terms of the system size $L$. 
		To resolve this, we observe that Proposition \ref{prop:crude} can be considerably refined by adapting the analysis from \cite{cosco2021topologically}. 
			Rather than giving an explicit derivation, we content ourselves with the following observations. One can use the arguments in \cite[Section 4]{cosco2021topologically} to prove that for all $L$ and $\beta\geq 1$, 
			\[
			\mu^{\periodic}_{L,\beta}(B) \geq \frac1{L}\,,\qquad \mu^{\periodic}_{L,\beta}(B^c)\geq \frac{1}{C_0\sqrt{\beta}L}e^{-C_0\beta/L}\,,
			\]
			where $C_0$ is an absolute constant, and that for any $\delta\in(0,1)$ one has 
			 \[
		\mu^{\periodic}_{L,\beta}(A_{\delta})\le C_0 
		\beta L\delta e^{-\beta(2-\delta^{2}/2)}.
		\]
		From these bounds it is not difficult to check that for an appropriate absolute constant $C_0$, if $\beta\geq C_0\log L$, $L\geq C_0$, then for all fixed $\delta\in(0,1)$, the left hand side of \eqref{eq:bottleneck} is bounded from above by 
		\[
		C_0\beta^{3/2}L^3\delta\,e^{-\beta(2-\delta^{2}/2-C_0L^{-1})}.
		\]
		Thus, from \eqref{eq:varprin21} and \eqref{eq:varprin22} one arrives at the following relaxation time lower bound: for all $\delta\in(0,1)$, there exists a constant $c_\delta>0$ such that for $L\geq 1/c_\delta$, and $\beta\geq C_0\log L$,  for an appropriate absolute constant $C_0$, one has    
		 \begin{equation}\label{eq:trelbottle}	 
		 T_{\rm rel}^{\periodic}(L,\beta) \geq c_\delta\beta^{-1/2}L^{-4}\, 
		 e^{(2-\delta^2)\beta}\,.
		\end{equation}
	\end{remark}

	\section{$O(N)$ on the cycle}
	Here we prove Theorem \ref{th:periodic3}.
	In order to simplify notation we omit some subscripts and superscripts; we only discuss in this section the relaxation time for fixed $N\ge 3$, and fixed $L$ and $\beta$, with periodic boundary conditions.

	\subsection{General strategy}
	In order to bound the relaxation time from above, we will show how to bring a configuration, using a canonical path method, to a small neighborhood where the hamiltonian in convex.
	
	\begin{definition}
		The \emph{arctic} is defined as the set of configurations where all spins are in the ball of radius $\arccos(0.99) \approx 0.02 \times 2\pi$ around ${\rm e}_1$:
		\begin{equation}
			A = \{S \in \bbS^{N-1} : S\cdot {\rm e}_1 > 0.99\}^L.
		\end{equation}
	\end{definition}
	The following is a simple consequence of our definitions. 
	\begin{fact}
		$H^{\periodic}_L$ is convex on $A$.
	\end{fact}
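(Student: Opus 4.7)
The plan is to interpret convexity on $A$ via a natural chart on the upper hemisphere and reduce to edge-wise convexity. Define $\phi : B \to \bbS^{N-1}$ by $\phi(u) = \sqrt{1-\norm{u}^2}\,{\rm e}_1 + u$, where $B = \{u \in \R^N : u\cdot {\rm e}_1 = 0,\ \norm{u}<1\}$. Applied coordinate-wise, $\phi^{\otimes L}$ parametrizes the open hemisphere $\{S\cdot {\rm e}_1 > 0\}^L$ diffeomorphically, and the arctic $A$ corresponds to the convex product $\{\norm{u_i}^2 < 1 - 0.99^2\}^L \subset B^L$. In these Euclidean coordinates, convexity of $H^{\periodic}_L$ on $A$ is the statement that the pullback $H^{\periodic}_L \circ \phi^{\otimes L}$ has positive-semidefinite Hessian on that product.

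Using the identity $2\,S_i \cdot S_{i+1} = 2 - \norm{S_i - S_{i+1}}^2$, one rewrites
\begin{equation*}
H^{\periodic}_L\bigl(\phi(u_1), \ldots, \phi(u_L)\bigr) = -L + \tfrac{1}{2}\sum_{i=1}^{L} g(u_i, u_{i+1}),
\end{equation*}
where $g(u,v) := \norm{\phi(u)-\phi(v)}^2 = 2 - 2\sqrt{(1-\norm{u}^2)(1-\norm{v}^2)} - 2\, u\cdot v$. Since each summand depends on only two coordinates, and convexity is preserved under such sums, it suffices to prove that $g$ is convex on $B\times B$. I would then compute the Hessian of $g$ directly and verify positive semidefiniteness.

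The one non-trivial step gives the following sum-of-squares identity. Setting $a = 1 - \norm{u}^2$ and $b = 1 - \norm{v}^2$, the Hessian quadratic form of $g$ evaluated on $(x,y) \in {\rm e}_1^{\perp} \times {\rm e}_1^{\perp}$ equals
\begin{equation*}
Q(x,y) = 2\bigl(\sqrt{b/a}\,\norm{x}^2 + \sqrt{a/b}\,\norm{y}^2 - 2\, x\cdot y\bigr) + \frac{2}{\sqrt{ab}}\bigl(\sqrt{b/a}\,(u\cdot x) - \sqrt{a/b}\,(v\cdot y)\bigr)^2.
\end{equation*}
The first bracket equals $2\norm{p-q}^2 \geq 0$ after the rescaling $p = (b/a)^{1/4} x$, $q = (a/b)^{1/4} y$; the second is a manifest square. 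So $Q \geq 0$ on all of $B \times B$, and in fact convexity holds well beyond the arctic — the threshold $0.99$ plays no role for the convexity itself, and will only be used later to bound the derivatives of $\phi$ quantitatively.

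No serious obstacle is expected: the proof boils down to the short computation above. The one subtlety worth flagging is the notion of convexity: genuine geodesic convexity of $-S\cdot T$ on the product of two spherical caps would fail (on symmetric pairs $(S_0, T_0)$ close to but not equal to ${\rm e}_1$, a product geodesic with a common transverse initial velocity gives a strictly negative second derivative of $-S\cdot T$). Euclidean convexity in the chart $\phi$ is therefore the natural and appropriate interpretation for the canonical path argument that follows.
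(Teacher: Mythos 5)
Your computation is correct, and it is worth saying up front that the paper offers no proof of this Fact at all (it is asserted as ``a simple consequence of our definitions''), so the only real question is whether what you prove is what the paper needs. The algebra checks out: with $a=1-\norm{u}^2$, $b=1-\norm{v}^2$, the Hessian of $g(u,v)=2-2\sqrt{ab}-2\,u\cdot v$ in the direction $(x,y)$ is exactly $2\bigl\|(b/a)^{1/4}x-(a/b)^{1/4}y\bigr\|^2+\frac{2}{\sqrt{ab}}\bigl(\sqrt{b/a}\,(u\cdot x)-\sqrt{a/b}\,(v\cdot y)\bigr)^2\ge 0$, and since $H^{\periodic}_L\circ\phi^{\otimes L}=-L+\tfrac12\sum_i g(u_i,u_{i+1})$ is a sum of two-coordinate terms, convexity in the chart holds on the whole product of open hemispheres. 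Your side remark is also correct, and important: geodesic convexity genuinely fails for $N\ge 3$. For a non-constant configuration in the arctic lying in a single $2$-plane, choosing every tangent vector $\xi_i$ equal to a unit vector orthogonal to all the spins gives Riemannian Hessian value $2\sum_i(S_i\cdot S_{i+1}-1)<0$; this is precisely the ``pull all spins towards a common pole'' mechanism the paper itself exploits in Step 2 of its path construction, so no shrinking of the cap repairs it.

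The gap is in your final claim that chart convexity is ``the appropriate interpretation for the canonical path argument that follows''. In the proof of Lemma~\ref{lem:pathargument} the Fact is consumed through the Brascamp--Lieb inequality of \cite{kolesnikov2017brascamp} on the Riemannian product $(\bbS^{N-1})^L$, where what is required is a positive lower bound on the generalized Ricci tensor $\mathrm{Ric}+\beta\,\nabla^2 H$ on $A$, i.e.\ nonnegativity of the \emph{Riemannian} Hessian of $H$ (plus geodesic convexity of $A$); your own counterexample shows this input is unavailable, and chart convexity cannot be substituted verbatim. Pushing $\mu(\cdot\mid A)$ forward by $\phi^{\otimes L}$ produces the potential $\beta\,(H\circ\phi^{\otimes L})+\tfrac12\sum_i\log(1-\norm{u_i}^2)$, whose second summand is concave; along the direction in which all $u_i$ are translated together (the global rotation mode, where $H\circ\phi^{\otimes L}$ is flat) the total potential is strictly non-convex, so the naive Euclidean Brascamp--Lieb does not apply either. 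One can make your reading serve the argument, but only with additional steps you do not supply: a Holley--Stroock perturbation to discard the Jacobian factor $\prod_i(1-\norm{u_i}^2)^{-1/2}$ (bounded by $0.99^{-L}$ on the arctic, costing $e^{O(L)}$), a diameter-type Poincar\'e bound of order $O(L)$, uniform in $\beta$, for the log-concave measure $e^{-\beta H\circ\phi^{\otimes L}}$ on the convex product domain, and a comparison of spherical and chart gradients on the arctic. Such losses are tolerable for the $e^{CL\log L}$ constant in Theorem~\ref{th:periodic3}, but they change the inequality actually used in Lemma~\ref{lem:pathargument}; as written, your proposal proves a true and useful statement while leaving the step the Fact is invoked for unjustified -- indeed unjustifiable in the Riemannian form in which the paper invokes it.
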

	
	\begin{definition}
		Fix an open set $U\subseteq \Omega$. A path from $U$ is a continuous function $\Phi:U \times [0,1] \to \Omega$.
		We define the path's {\em energy} and {\em entropy} respectively as
		\begin{align}
			\Delta H_\Phi (S,t) &= H^{\periodic}_L(\Phi(S,t))-H^{\periodic}_L (S), \\
			\entropy_\Phi (S,t) &= \log \left | \frac{d(\Phi(\cdot,t))_{\#} \nu_L}{d \nu_L} \right |,
		\end{align}
		where $\left | \frac{d(\Phi(\cdot,t))_{\#} \nu_L}{d \nu_L} \right |$ is the density of the push-forward measure of $\nu_L$ with respect to $\nu_L$; in local coordinates it is given by the determinant of the Jacobian matrix.
		The \emph{free energy barrier} of the path is defined by
		\begin{equation}
			\Delta F [\Phi] = \sup_{t\in [0,1]} \sup_{S \in U} \, \Delta H_{\Phi} (S,t) - \beta^{-1} \entropy_{\Phi} (S,t).
		\end{equation}
		The {\em speed} of the path is defined by
		\begin{equation}
			v[\Phi] = \sup_{t\in [0,1]} \sup_{S \in U} \norm{ \partial_t{\Phi}(S,t) },
		\end{equation}
		where $\norm{\cdot}$ denotes the euclidean norm
		\[
		\norm{\xi}^2 = \sum_{j=1}^L\norm{\xi_j}^2\,,\qquad \xi=(\xi_1,\dots,\xi_L)\in\bbR^L.
		\]
		 The paths to be considered below are almost everywhere differentiable, so the above is well defined as an essential supremum. 
	\end{definition}
	
	\begin{lemma}\label{lem:pathargument}
		Let $U_1,\dots,U_K$ be some finite collection of open subsets of $\Omega$, and $\Phi_1,\dots,\Phi_K$ a collection of paths, $\Phi_i : U_i \times [0,1]\to \Omega$.
		Assume:
		\begin{enumerate}
			\item $\bigcup_{i=1}^K U_i = \Omega$.
			\item $\Phi_i (S,0) = S$ and $\Phi_i (S,1) \in A$ for all $S\in U_i$.
			\item $\Delta F := \sup_i \Delta F[\Phi_i]$ and $v := \sup_i v[\Phi_i]$ are both finite.
		\end{enumerate}
		Then 
		\begin{equation}
			T_{\rm rel} \le 3 K^2 \beta (v^2e^{\beta \Delta F}+e^{2\beta \Delta F}\mu(A))
		\end{equation}
		In particular, if the energy barrier $\sup_i \sup_{t\in [0,1]} \sup_{S \in U_i} \, \Delta H_{\Phi_i} (S,t)$ is non-positive then the $\beta$-dependence of the relaxation time grows at most linearly.
	\end{lemma}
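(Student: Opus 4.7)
The plan is to implement a continuous version of the canonical-path / comparison method: route every configuration into the convex region $A$ via the maps $\Phi_i$, compare the resulting push-forward measure to $\mu|_A$, and close with a Poincaré inequality inside $A$ coming from the convexity of $H_L^{\periodic}$. Fix once and for all a measurable selector $i:\Omega \to \{1,\dots,K\}$ with $S \in U_{i(S)}$, set $T(S) = \Phi_{i(S)}(S,1) \in A$, and let $\hat\mu := T_{\#}\mu$. Starting from the double-integral representation of the variance and the three-term decomposition $f(S)-f(S') = (f(S)-f(T(S))) + (f(T(S))-f(T(S'))) + (f(T(S'))-f(S'))$, one reduces the claim to the two estimates
\begin{equation*}
\int (f - f\circ T)^2\, d\mu \le K v^2 e^{\beta \Delta F}\beta\, \cD_{L,\beta}^{\periodic}(f,f) \quad \text{and} \quad \text{Var}_{\hat\mu}(f) \le C\,K^2 e^{2\beta \Delta F}\mu(A)\beta\, \cD_{L,\beta}^{\periodic}(f,f).
\end{equation*}

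Both bounds rest on the same change-of-variable identity: for each $t \in [0,1]$ and each $i$, the map $\Phi_i(\cdot,t)$ is a diffeomorphism onto its image and the Radon--Nikodym derivative of $(\Phi_i(\cdot,t))_{\#}\mu$ with respect to $\mu$ equals $e^{\beta \Delta H_{\Phi_i}(S,t) - \entropy_{\Phi_i}(S,t)}$, which by the very definition of $\Delta F$ is bounded above by $e^{\beta \Delta F}$. For the first bound I would write $f(T(S)) - f(S) = \int_0^1 Df(\Phi_{i(S)}(S,t)) \cdot \partial_t \Phi_{i(S)}(S,t)\, dt$, apply Cauchy--Schwarz with the speed bound $\norm{\partial_t \Phi_i}\le v$, and then change variables $y = \Phi_i(S,t)$ on each sheet $\{i(S)=i\}$; summing the $K$ contributions and recalling $\cD_{L,\beta}^{\periodic}(f,f) = \beta^{-1}\int \norm{Df}^2 d\mu$ produces the first estimate. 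For the second bound the same change of variables at $t=1$ gives the pointwise inequality $d\hat\mu/d\mu \le K e^{\beta \Delta F}\mathbf{1}_A$, equivalently $d\hat\mu/d\mu_A \le K e^{\beta \Delta F}\mu(A)$ with $\mu_A := \mu|_A/\mu(A)$, and then squaring this pointwise bound inside the double-integral definition of the variance yields $\text{Var}_{\hat\mu}(f) \le (Ke^{\beta\Delta F}\mu(A))^2\, \text{Var}_{\mu_A}(f)$.

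The last ingredient is a Poincaré inequality for $\mu_A$. Since $H_L^{\periodic}$ is convex on $A$ and the product of spheres $\Omega$ has Ricci curvature bounded below by $N-2>0$, the Bakry--\'Emery criterion applied to $\mu_A$ gives $\text{Var}_{\mu_A}(f) \le C(N)\beta\, \cD_{L,\beta}^{\periodic}(f,f)/\mu(A)$, which inserted into the previous display completes the proof; the linear-in-$\beta$ consequence when $\Delta F \le 0$ is then automatic. The main obstacle I anticipate is precisely this Poincaré step, since Bakry--\'Emery on a set with boundary requires care and one has to verify that the Euclidean convexity of the Hamiltonian translates into the geodesic convexity used by the criterion on the product of spheres. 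Should this turn out to be delicate, a robust alternative is a direct canonical-path argument inside $A$ based on simultaneous geodesic contraction of every spin toward ${\rm e}_1$: this contraction decreases the Hamiltonian monotonically, so its associated $\Delta F$ is nonpositive, and its Jacobian is controlled explicitly because $A$ has bounded diameter, yielding an explicit Poincaré constant for $\mu_A$.
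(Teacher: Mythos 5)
Your proposal is correct and follows essentially the same route as the paper's proof: the same three-term decomposition along the paths, the same change-of-variables control $e^{\beta \Delta H_{\Phi_i}-\entropy_{\Phi_i}}\le e^{\beta\Delta F}$ on the push-forward density, and a convexity-plus-positive-curvature Poincar\'e inequality on $A$ (the paper invokes the Brascamp--Lieb inequality of Kolesnikov--Milman for the conditional measure $\mu(\cdot\mid A)$, which plays exactly the role of your Bakry--\'Emery step and settles the boundary concern you flag, since $A$ is a product of small geodesic caps). The only cosmetic difference is your measurable selector and push-forward measure $\hat\mu$ in place of the paper's double sum over the covering sets $U_i$, which overcounts harmlessly and yields the same $K^2$ factor.
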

	
	\begin{proof}
		Fix some test function $f : \Omega \to \R$, and use the notation $\Phi_i(X)=\Phi_i(X,1)$. The variance of $f$ with respect to $\mu:=\mu_{L,\beta}^{\periodic}$ satisfies 
		\begin{multline}
			2 {\rm Var}_{L,\beta}^{\periodic}
			 (f) = \int d\mu(X)d\mu(Y) (f(X)-f(Y))^2\\
			\le \sum_{i=1}^K \sum_{j=1}^K  \int d\mu(X)d\mu(Y) \1_{U_i}(X) \1_{U_j}(Y) (f(X)-f(Y))^2 \\
			= \sum_{i=1}^K \sum_{j=1}^K  \int d\mu(X)d\mu(Y) \1_{U_i}(X) \1_{U_j}(Y) \\ \times (f(X)-f(\Phi_i(X)) + f(\Phi_i(X)) - f(\Phi_j(Y)) +f(\Phi_j(Y))-f(Y))^2\\
			\le 3 \sum_{i=1}^K \sum_{j=1}^K  \int d\mu(X)d\mu(Y) \1_{U_i}(X) \1_{U_j}(Y) (f(X)-f(\Phi_i(X)))^2 \\
			+3 \sum_{i=1}^K \sum_{j=1}^K  \int d\mu(X)d\mu(Y) \1_{U_i}(X) \1_{U_j}(Y) (f(\Phi_i(X))-f(\Phi_j(Y)))^2 \\
			+3 \sum_{i=1}^K \sum_{j=1}^K  \int d\mu(X)d\mu(Y) \1_{U_i}(X) \1_{U_j}(Y) (f(\Phi_j(Y))-f(Y))^2 \\
			\le 6K \sum_{i=1}^K \int d\mu(X)\1_{U_i}(X) (f(X)-f(\Phi_i(X)))^2 \\
			+ 3 \sum_{i=1}^K \sum_{j=1}^K  \int d\mu(X)d\mu(Y) \1_{U_i}(X) \1_{U_j}(Y) (f(\Phi_i(X))-f(\Phi_j(Y)))^2
			\\ = 6K\sum_{i=1}^K{\rm (I)}_i + 3\sum_{i=1}^K\sum_{j=1}^K{\rm (II)}_{i,j}
		\end{multline}
		
		We start with the first term.
		For any $i$ and $X\in \Omega$,
		\begin{align}
			(f(X)-f(\Phi_i(X)))^2 &= \left(\int_0^1 dt \sum_{j=1}^LD_jf(\Phi_i(X,t))\cdot (\partial_t \Phi_i(X,t))_j) \right)^2 \\
			&\le v^2 \int_0^1 dt \sum_{j=1}^L\norm{D_jf(\Phi_i(X,t))}^2,
		\end{align}
		where $D_j$ denotes the gradient on the sphere $\bbS^{N-1}$ acting on the $j$-th spin. 
		Plugging this into (I), recalling the definition of $\mu$, and then changing variable to $X' = \Phi_I(X,t)$, yield
		\begin{align}
			{\rm (I)}_i &\le v^2 \int_0^1 dt  \int d\mu(X)\1_{U_i}(X) \sum_{j=1}^L\norm{D_jf(\Phi_i(X,t))}^2 \\
			&= \frac{v^2}{Z} \int_0^1 dt  \int d\nu(X)
			e^{-\beta H(\Phi_i(X,t)) + \beta \Delta H_{\Phi_i}(X,t)}
			\1_{U_i}(X)\sum_{j=1}^L \norm{D_jf(\Phi_i(X,t))}^2 \\
			&= \frac{v^2}{Z} \int_0^1 dt  \int d\nu(X') e^{-\entropy_{\Phi_i}(X,t)}
			e^{-\beta H(X')} e^{ \beta \Delta H_{\Phi_i}(X,t)}
			\1_{U_i}(X)\sum_{j=1}^L \norm{D_jf(X')}^2 \\
			&\le v^2 e^{\beta \Delta F} \int_0^1 dt  \int d\mu(X')
			\sum_{j=1}^L\norm{D_jf(X')}^2 = v^2 e^{\beta \Delta F} \sum_{j=1}^L \mu (\norm {D_jf}^2) \\
			& = \beta v^2 e^{\beta \Delta F} \cD_{L,\beta}^{\periodic}(f,f). \label{eq:46}
		\end{align}
		
		For the second term, the change of variables $X' = \Phi_i(X,t) , Y'=\Phi_j(Y,t)$ leads using the same calculation to
		\begin{equation}
			{\rm (II)}_{i,j}
			\le e^{2\beta \Delta F} 
			\int d\mu(X')d\mu(Y') \1_{A}(X') \1_{A}(Y') (f(X')-f(Y'))^2.
		\end{equation}
		This last integral equals $\mu(A)^2$ times the variance of $f$ under the conditional measure $\mu(\cdot | A)$. Since on $A$ the Hamiltonian is convex, and since $\Omega$ has positive curvature $1$, the Brascamp-Lieb inequality for $\mu(\cdot | A)$, see e.g.\ \cite[Theorem 1.2]{kolesnikov2017brascamp},  tells us that:
		\begin{equation}
			\frac{1}{2} \int d\mu(X|A)d\mu(Y|A)(f(X)-f(Y))^2 \le  \sum_{j=1}^L\mu(\norm{D_jf}^2 | A).
		\end{equation}
		We conclude that
		\begin{equation}
			{\rm (II)}_{i,j} \le 2 e^{2\beta \Delta F} \mu(A) \sum_{j=1}^L\mu(\norm{D_jf}^2 ) =2 \beta e^{2\beta \Delta F} \mu(A)  \cD_{L,\beta}^{\periodic}(f,f).
		\end{equation}
	Together with the bound \eqref{eq:46}, this ends the proof of the lemma.
	\end{proof}

	\subsection{Constructing the path: a soft argument for any $N\geq 3$}
	We will construct paths to be used in Lemma \ref{lem:pathargument} in three parts: first, we use the deterministic flow in order to bring the configuration near a critical point, where all spins are on a single {\em great circle}, defined as the intersection of the sphere with a $2$ dimensional plane. Then, we pull the spins to a point perpendicular to that great circle. Finally, we rotate the configuration to the arctic.
	
	\subsubsection*{Step 1: Approaching a great circle}
	
	Define the deterministic flow $\varphi : \Omega \times \R \to \Omega$ as the solution, for each $S\in\Omega$, of the differential equation
	\begin{align}
	\frac{\partial\varphi(S,t)}{\partial t} & =-DH(\varphi(S,t)),\label{eq:deterministic_flow}\\
	\varphi(S,0) & =S,
	\end{align}	
	where we use $D=(D_j)_{j=1,\dots,L}$ for the $L$-vector of sphere gradients $D_j$.  
	\begin{claim}
	Let $S$ be a stationary point of the flow. Then the spins $S_{1},\dots,S_{L}$
	all belong to the same great circle.
	\end{claim}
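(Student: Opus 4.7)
The plan is to translate the stationarity condition $DH(S)=0$ into a linear three-term recurrence on the spins, and then observe that such a recurrence confines all spins to a single two-dimensional linear subspace of $\R^N$, whose intersection with $\bbS^{N-1}$ is a great circle.

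First I would compute the sphere gradient $D_j H$. Since $H=-\sum_i S_i\cdot S_{i+1}$ (with indices mod $L$), only the two neighbors of $S_j$ enter the term containing $S_j$, and the Euclidean gradient of $-S_j\cdot(S_{j-1}+S_{j+1})$ with respect to $S_j$ is $-(S_{j-1}+S_{j+1})$. The gradient on $\bbS^{N-1}$ is obtained by projecting onto the tangent space at $S_j$, which gives
\begin{equation*}
D_j H(S) \;=\; -(S_{j-1}+S_{j+1}) \;+\; \bigl((S_{j-1}+S_{j+1})\cdot S_j\bigr)\,S_j.
\end{equation*}
Setting $D_j H(S)=0$ for every $j$ is therefore equivalent to the statement that $S_{j-1}+S_{j+1}$ is proportional to $S_j$, i.e.\ there exist scalars $\lambda_j\in\R$ such that
\begin{equation*}
S_{j+1} \;=\; \lambda_j S_j - S_{j-1}, \qquad j=1,\dots,L,
\end{equation*}
where indices are understood cyclically.

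Next I would exploit this recurrence. Let $V=\spansp\{S_1,S_2\}\subseteq\R^N$, a linear subspace of dimension at most $2$. By induction on $j$, the recurrence immediately gives $S_j\in V$ for every $j$: the base cases $j=1,2$ are trivial, and the inductive step follows from $S_{j+1}=\lambda_j S_j-S_{j-1}\in V$. Hence all spins lie in the intersection $V\cap\bbS^{N-1}$. If $\dim V=2$, this intersection is exactly a great circle; if $\dim V\le 1$ (which happens only when $S_1=\pm S_2$), then all spins lie on the pair of points $V\cap\bbS^{N-1}$, which is contained in any great circle through $S_1$, so the claim holds trivially.

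There is essentially no obstacle here; the main conceptual point is that the stationarity condition is a discrete linear second-order equation with constant ambient dimension $2$ generated by the first two terms. No closure issue arises from the periodic identification because the recurrence is used only to propagate membership in $V$, and being in $V$ is preserved around the cycle.
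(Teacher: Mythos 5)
Your proposal is correct and follows essentially the same route as the paper: compute the spherical gradient by projecting the Euclidean gradient, note that $D_jH(S)=0$ forces $S_{j-1}+S_{j+1}$ to be a multiple of $S_j$, and propagate this around the chain to conclude that all spins lie in $\spansp\{S_1,S_2\}$, hence on a common great circle. The only difference is that you spell out the induction and the degenerate case $\dim\spansp\{S_1,S_2\}\le 1$ explicitly, which the paper leaves implicit.
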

	
	\begin{proof}
	By explicit calculation in $\R^{N}$,
	\begin{align*}
	-D_{i}H(S) & =D_{i}\left[S_{i-1}\cdot S_{i}+S_{i}\cdot S_{i+1}\right]=S_{i-1}-(S_{i-1}\cdot S_{i})S_{i}+S_{i+1}-(S_{i+1}\cdot S_{i})S_{i}\\
	 & =S_{i-1}+S_{i+1}-(S_{i-1}\cdot S_{i}+S_{i+1}\cdot S_{i})S_{i}.
	 \end{align*}
	Therefore, $D_{i}H(S)=0$ implies that $S_{i+1}$ is in the linear span of  $S_{i-1}$ and $S_{i}$. It follows that if $DH(S)=0$, then this holds for all $i$, and therefore all the spins belong to the same 2D plane. 
	\end{proof}
	
	\subsubsection*{Step 2: Pulling towards a single point}
	
	Once all spins are on a great circle, we move them to a point $s$ perpendicular to that circle. First, we define the flow $\phi$ that does that. We then need to show that the energy is decreasing along the flow.
	This is done in two parts---initially, spins that were on one side of the great circle move closer to it, and the others move away. After a short period, the spins on the "wrong" side cross the great circle, and in the second part of the motion all spins are on a single hemisphere. 
	Claim \ref{claim:moving_to_hemisphere} shows that if we start $\varepsilon$-close to a great circle, after time $2\varepsilon$ all points are going to be at the same side of the circle.\\
	We would then like to show that the energy is decreasing up to time $2\varepsilon$. This unfortunately is not always true---if all spins are in the hemisphere opposite to $s$, then initially distances grow and the energy increases. This problem, however, could be easily solved by replacing $s$ with $-s$, which is now in the right hemisphere. In Claim \ref{claim:up_or_down} we show that whenever we start $\varepsilon$-close to the great circle, pulling the spins to $s$ or $-s$ results in energy decrease.\\
	Finally, Claim \ref{claim:pulling_in_hemisphere} shows that once all spins are in the same hemisphere as $s$, the energy is decreasing.

	\begin{definition}
		Fix two spins $s,s'\in\bbS^{N-1}$. We define the function $\phi_{s}:\bbS^{N-1} \times\R\to\bbS^{N-1}$
		as the flow on the sphere given by the following equation in $\R^{N}$:
		\begin{align*}
			\frac{\partial}{\partial t}\phi_{s}(s',t) & =s-(\phi(s',t)\cdot s)\phi(s',t),\\
			\phi_{s}(s',0) & =s'.
		\end{align*}
		For a configuration $S=(S_{1},\dots,S_{L})$ we write $\phi_{s}(S,t)=\left(\phi_{s}(S_{1},t),\dots,\phi_{s}(S_{L},t)\right)$.
	\end{definition}

	\begin{Obs}
		If $s'$ is perpendicular to $s$, we can solve explicitly 
		\[
		\phi_{s}(s',t)=\tanh(t)\,s+\frac{1}{\cosh(t)}\,s',
		\]
		which shows that $\lim_{t\to\infty}\phi_{s}(s',t)=s$. This last fact
		is true in general---we can always choose $\tilde{s}$ and $t_{0}$
		such that
		\[
		\phi_{s}(s',t)=\tanh(t-t_{0})\,s+\frac{1}{\cosh(t-t_{0})}\,\tilde{s},
		\]
		by taking $\tilde{s}$ perpendicular to $s$ in the plane spanned
		by $s$ and $s'$, and determine $t_{0}$ using $\phi_{s}(s',0)=s'$.
	\end{Obs}

	\begin{claim}\label{claim:moving_to_hemisphere}
		For any $\varepsilon<\frac{1}{2}$, if $\left|s\cdot s'\right|<\varepsilon$
		then $\phi_{s}(s',2\varepsilon)\cdot s>0.$
	\end{claim}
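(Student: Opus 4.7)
The plan is to use directly the explicit formula from the Observation preceding the claim:
\begin{equation*}
\phi_s(s',t) = \tanh(t-t_0)\,s + \frac{1}{\cosh(t-t_0)}\,\tilde s,
\end{equation*}
where $\tilde s$ is perpendicular to $s$ in the plane $\mathrm{span}(s,s')$ and $t_0\in\bbR$ is fixed by the initial condition $\phi_s(s',0)=s'$.

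First I would read off from the initial condition that $s' = -\tanh(t_0)\,s + \frac{1}{\cosh(t_0)}\,\tilde s$, so that
\begin{equation*}
s\cdot s' = -\tanh(t_0),
\end{equation*}
since $\tilde s \perp s$ and $\|s\|=1$. Hence the hypothesis $|s\cdot s'|<\varepsilon$ translates into $|\tanh(t_0)|<\varepsilon$, equivalently $|t_0|<\mathrm{arctanh}(\varepsilon)$. On the other hand, evaluating at time $2\varepsilon$ gives
\begin{equation*}
\phi_s(s',2\varepsilon)\cdot s = \tanh(2\varepsilon - t_0),
\end{equation*}
so the claim reduces to showing $2\varepsilon - t_0 > 0$, i.e.\ $t_0 < 2\varepsilon$.

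Since $t_0 < \mathrm{arctanh}(\varepsilon)$, it then suffices to verify the elementary inequality $\mathrm{arctanh}(\varepsilon)\le 2\varepsilon$ for $\varepsilon\in(0,1/2)$. This holds because $\mathrm{arctanh}(0)=0$ and $\frac{d}{d\varepsilon}\mathrm{arctanh}(\varepsilon) = \frac{1}{1-\varepsilon^2}$ is bounded by $\frac{1}{1-1/4}=\frac{4}{3}<2$ on $[0,1/2]$, so by the mean value theorem $\mathrm{arctanh}(\varepsilon)\le \frac{4}{3}\varepsilon < 2\varepsilon$. Combining with the bound on $t_0$ yields $2\varepsilon - t_0 > 0$, hence $\phi_s(s',2\varepsilon)\cdot s = \tanh(2\varepsilon - t_0) > 0$, as desired.

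There is no real obstacle here: the whole statement is a one-parameter computation on the $2$-dimensional plane $\mathrm{span}(s,s')$ (when $s,s'$ are linearly independent; the degenerate case $s'=\pm s$ is trivial from the formula since then $\tilde s$ can be chosen arbitrarily perpendicular to $s$ and $\phi_s(s',t)=s$ for $t>0$ when $s'=s$, while the case $s\cdot s'=0$ is handled by the explicit formula in the Observation). The only mildly delicate point is checking the scalar inequality $\mathrm{arctanh}(\varepsilon)<2\varepsilon$ on $(0,1/2)$, which the factor $2$ in ``$2\varepsilon$'' is precisely designed to accommodate.
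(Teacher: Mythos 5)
Your proof is correct, but it takes a genuinely different route from the paper. You solve the flow explicitly on the plane $\mathrm{span}(s,s')$ via the hyperbolic parametrization $\phi_s(s',t)=\tanh(t-t_0)\,s+\frac{1}{\cosh(t-t_0)}\,\tilde s$ from the preceding Observation, read off $s\cdot s'=-\tanh(t_0)$, and reduce the claim to the scalar inequality $\mathrm{arctanh}(\varepsilon)<2\varepsilon$ on $(0,1/2)$. The paper instead never solves the ODE: it notes that $\frac{\dd}{\dd t}\big(\phi_s(s',t)\cdot s\big)=1-(\phi_s(s',t)\cdot s)^2\ge 0$, and argues by contradiction that $\phi_s(s',t)\cdot s$ cannot stay in $[-\varepsilon,0]$ up to time $2\varepsilon$, since there the derivative would exceed $3/4$ and force the value above $\varepsilon/2$. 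Your approach buys more: it identifies the exact crossing time $t_0=-\mathrm{arctanh}(s\cdot s')$, so positivity in fact holds for all $t>\mathrm{arctanh}(\varepsilon)$, a slightly sharper statement than needed. The paper's differential-inequality argument is more self-contained (it does not lean on the general-position formula, which the Observation states with only a sketch, though it is easy to verify) and is the kind of monotonicity argument that would survive perturbations of the vector field. One small inaccuracy in your write-up: the degenerate cases $s'=\pm s$ are not ``trivial from the formula'' — the formula with finite $t_0$ cannot represent the fixed points $\pm s$ — but they are simply excluded by the hypothesis $|s\cdot s'|<\varepsilon<\tfrac12$, so nothing is lost.
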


	\begin{proof}
	Note that 
	\[
	\frac{\dd}{\dd t}\left(\phi_{s}(s',t)\cdot s\right)=1-(\phi(s',t)\cdot s)^{2}\ge0,
	\]
	so it is enough to prove that $\phi_{s}(s',t)\cdot s$ cannot remain
	in the interval $[-\varepsilon,0]$ for all $t\in[0,2\varepsilon]$.
	But assuming it does, 
	\[
	\frac{\dd}{\dd t}\left(\phi_{s}(s',t)\cdot s\right)=1-(\phi(s',t)\cdot s)^{2}>\frac{3}{4},
	\]
	and therefore $\phi_{s}(s',2\varepsilon)\cdot s>-\varepsilon+\frac{3}{4}\cdot2\varepsilon=\frac{\varepsilon}{2}>0$.
	\end{proof}

	\begin{claim}
		Fix $s\in\mathbb{S}^{N-1}$, and a configuration $S$ where all spins
		are on a great circle perpendicular to $s$. If not all spins are aligned,
		then
		
		\[
		\left.\frac{\dd^{2}}{\dd t^{2}}H(\phi_{s}(S,t))\right|_{t=0}<0.
		\]
	\end{claim}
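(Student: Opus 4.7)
The plan is a direct computation of $\tfrac{d^2}{dt^2}H(\phi_s(S,t))$ at $t=0$ by expanding the flow to second order in $t$. Write $x_i(t) = \phi_s(S_i,t)$, so by definition
\begin{equation*}
\dot x_i = s - (x_i\cdot s)\, x_i.
\end{equation*}
The first step is to evaluate $\dot x_i$ and $\ddot x_i$ at $t=0$ using the hypothesis $S_i\cdot s = 0$. At $t=0$ we get $\dot x_i(0) = s$, and differentiating again,
\begin{equation*}
\ddot x_i = -(\dot x_i\cdot s)\, x_i - (x_i\cdot s)\, \dot x_i,
\end{equation*}
so $\ddot x_i(0) = -(s\cdot s)\,S_i - 0 = -S_i$. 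This is the key simplification that the orthogonality hypothesis buys us.

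Next I would plug these into the expansion of $H(\phi_s(S,t)) = -\sum_i x_i\cdot x_{i+1}$. Differentiating twice gives
\begin{equation*}
\frac{d^2}{dt^2}H(\phi_s(S,t)) = -\sum_i\bigl(\ddot x_i\cdot x_{i+1} + 2\,\dot x_i\cdot \dot x_{i+1} + x_i\cdot \ddot x_{i+1}\bigr).
\end{equation*}
Evaluating at $t=0$ with $\ddot x_j(0) = -S_j$, $\dot x_j(0) = s$, and $\|s\|=1$, each term in the sum becomes $-S_i\cdot S_{i+1} + 2 - S_i\cdot S_{i+1}$, so
\begin{equation*}
\left.\frac{d^2}{dt^2}H(\phi_s(S,t))\right|_{t=0} = -2\sum_{i=1}^L\bigl(1 - S_i\cdot S_{i+1}\bigr).
\end{equation*}

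Finally I would conclude: each summand $1 - S_i\cdot S_{i+1}$ is nonnegative since $S_i,S_{i+1}\in\bbS^{N-1}$, and it vanishes only when $S_i = S_{i+1}$. The assumption that not all spins are aligned means there exists some $i$ with $S_i\neq S_{i+1}$, giving a strictly positive contribution and hence the strict inequality claimed. The only mild subtlety is remembering to use the orthogonality $S_i\cdot s = 0$ at the precise right places (to kill the $(x_i\cdot s)\dot x_i$ term in $\ddot x_i(0)$, and again to drop the cross term $\dot x_i\cdot S_{i+1}$ would have contributed if we had carried it); beyond that the computation is routine and no genuine obstacle is expected.
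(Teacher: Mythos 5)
Your computation is correct, and it reaches the same quantity as the paper: both arguments use the orthogonality $S_i\cdot s=0$ and $\|s\|=1$ to make everything explicit at $t=0$, and both yield (in your normalization) $\frac{d^2}{dt^2}H(\phi_s(S,t))\big|_{t=0}=-2\sum_i(1-S_i\cdot S_{i+1})=-\sum_i\|S_i-S_{i+1}\|^2$, which is strictly negative precisely because, on the cycle, ``not all spins aligned'' forces some consecutive pair $S_i\neq S_{i+1}$. The route is genuinely different in method, though: the paper exploits the closed-form solution of the flow for initial data perpendicular to $s$, namely $\phi_s(S,t)=\tanh(t)\,s+\cosh(t)^{-1}S$, so that $H(\phi_s(S,t))$ becomes an explicit function of $t$ (a constant plus $\cosh(t)^{-2}$ times $\tfrac12\sum_i\|S_i-S_{i-1}\|^2$), giving the second derivative for \emph{all} $t$, not just $t=0$; your argument instead differentiates the defining ODE twice and evaluates at $t=0$, which is more elementary and avoids the explicit solution, but only produces the value at the initial time — which is all the claim asks for. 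One cosmetic remark: your closing sentence about ``dropping the cross term $\dot x_i\cdot S_{i+1}$'' is a red herring — that inner product appears only in the first derivative (where it vanishes since $s\cdot S_{i+1}=0$ but is irrelevant to the claim); in the second derivative the orthogonality is used exactly twice, to get $\dot x_i(0)=s$ and $\ddot x_i(0)=-S_i$, and nowhere else.
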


	\begin{proof}
	Since all spins are perpendicular to $s$, we can write explicitly,
	setting $\underline{s}=(s,\dots,s)\in \Omega$,
	\begin{align*}
	\phi_{s}(S,t) & =\tanh(t)\,s+\frac{1}{\cosh(t)}\,S,\\
	H(\phi_{s}(S,t)) & =\frac{1}{2}\sum_{i}\norm{\phi_{s}(S,t)_{i}-\phi_{s}(S,t)_{i-1}}^{2} + {\rm const.}\\
	 & =\frac{1}{2}\frac{1}{\cosh(t)^{2}}H(S) + {\rm const.}\\
	\frac{\dd}{\dd t}H(\phi_{s}(S,t)) & =-\frac{\sinh(t)}{\cosh(t)^{3}}H(S)\\
	\frac{\dd^{2}}{\dd t^{2}}H(\phi_{s}(S,t)) & =\frac{2\sinh(t)^{2}-1}{\cosh(t)^{4}}H(S).
	\end{align*}
	\end{proof}
	
	In the following we use $d(\cdot,\cdot)$ to denote the geodesic distance on the sphere.
	\begin{claim}\label{claim:up_or_down}
	Fix $s\in\mathbb{S}^{N-1}$, and a non-constant configuration $S^{0}$
	where all spins are on a great circle perpendicular to $s$. There
	exists $\varepsilon$ small enough (depending on $S^{0}$), such that
	for all $S$ satisfying $d(S_{i},S_{i}^{0})<\varepsilon$ for all
	$i$, either $H(\phi_{s}(S,t))$ or $H(\phi_{-s}(S,t))$ (or both)
	is decreasing for $t\in[0,2\varepsilon]$.
	\end{claim}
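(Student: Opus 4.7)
The plan is to combine two ingredients: a symmetry between $\phi_s$ and $\phi_{-s}$ at time $t=0$, and the strict negativity of $\partial_t^2 H(\phi_{\pm s}(\cdot,t))$ at $(S^0,0)$ that follows from the preceding claim. Set $g_\sigma(S,t):=\frac{d}{dt}H(\phi_\sigma(S,t))$ for $\sigma\in\{s,-s\}$. From the definition of the flow we compute
\[
\left.\partial_t \phi_{-s}(S_i,t)\right|_{t=0}=-s-(S_i\cdot(-s))S_i=-\bigl(s-(S_i\cdot s)S_i\bigr)=-\left.\partial_t \phi_{s}(S_i,t)\right|_{t=0}.
\]
Applying the chain rule (with the sphere gradient $D_{S_i}H$), this yields $g_s(S,0)=-g_{-s}(S,0)$ for every $S\in\Omega$. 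Hence, for each $S$, at least one of $g_s(S,0)$ and $g_{-s}(S,0)$ is non-positive; denote the corresponding sign by $\sigma(S)\in\{\pm s\}$.

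The preceding claim shows that $\partial_t g_{\pm s}(S^0,t)\big|_{t=0}=\frac{d^2}{dt^2}H(\phi_{\pm s}(S^0,t))\big|_{t=0}<0$ since $S^0$ is non-constant (more precisely it equals a negative multiple of $H(S^0)+L>0$, using that $\tfrac12\sum_i\|S^0_i-S^0_{i-1}\|^2=H(S^0)+L>0$). Since $H$ and the flows $\phi_{\pm s}$ are smooth, the map $(S,t)\mapsto \partial_t g_{\pm s}(S,t)$ is continuous on $\Omega\times\mathbb{R}$. Hence there exist constants $c>0$ and $\varepsilon_0>0$ (depending on $S^0$) such that $\partial_t g_\sigma(S,t)\le -c$ holds uniformly for both signs $\sigma\in\{\pm s\}$, all $t\in[0,\varepsilon_0]$, and all $S$ satisfying $d(S_i,S_i^0)<\varepsilon_0$ for every $i$.

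Choosing any $\varepsilon<\varepsilon_0/2$, for $S$ in the $\varepsilon$-neighborhood of $S^0$ and $\sigma=\sigma(S)$, the map $t\mapsto g_\sigma(S,t)$ is strictly decreasing on $[0,2\varepsilon]$, so
\[
g_\sigma(S,t)\le g_\sigma(S,0)\le 0\qquad\text{for all }t\in[0,2\varepsilon],
\]
which shows $H(\phi_\sigma(S,t))$ is non-increasing on $[0,2\varepsilon]$, as claimed. The subtle step is the symmetry $g_s(S,0)=-g_{-s}(S,0)$ in the first paragraph: without it, perturbing $S^0$ could shift $g_{\pm s}(S,0)$ by $O(\varepsilon)$ in either direction, which is comparable to the time scale of interest, and a purely continuity-based argument against $g_{\pm s}(S^0,0)=0$ would fail to control the sign on $[0,2\varepsilon]$.
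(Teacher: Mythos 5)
Your proof is correct and takes essentially the same route as the paper: the antisymmetry $\frac{\dd}{\dd t}H(\phi_{-s}(S,t))\big|_{t=0}=-\frac{\dd}{\dd t}H(\phi_{s}(S,t))\big|_{t=0}$ selects a sign with non-positive initial derivative, and the strict negativity of $\frac{\dd^2}{\dd t^2}H(\phi_{\pm s}(\cdot,\cdot))$ at $(S^0,0)$ from the preceding claim is propagated by continuity to keep that derivative non-positive on $[0,2\varepsilon]$. The only cosmetic difference is that you use joint continuity in $(S,t)$ on a product neighborhood where the paper uses the unit speed bound $\norm{\partial_t\phi_s}\le 1$ to confine the trajectory to a $3\varepsilon$-neighborhood of $S^0$; your aside that the relevant quantity is a negative multiple of $H(S^0)+L>0$ is a correct clarification of the preceding claim's computation.
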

	
	\begin{proof}
	We consider two cases. The first, is when $\frac{\dd}{\dd t}H(\phi_{s}(S,0))\le0$.
	Since $\norm{\frac{\partial}{\partial t}\phi_{s}(s',t)}\le1$, during
	the time interval $[0,2\varepsilon]$ all spins of $S$ remain at
	a distance at most $3\varepsilon$ form $S^{0}$. By the last claim
	$\frac{\dd^{2}}{\dd t^{2}}H(\phi_{s}(S^{0},0))<0$, hence we may choose
	$\varepsilon$ small enough, such that $\frac{\dd^{2}}{\dd t^{2}}H(\phi_{s}(S',t))<0$
	for all $S'$ in a $3\varepsilon$-neighborhood of $S^{0}$. Then
	$\frac{\dd}{\dd t}H(\phi_{s}(S,t))\le0$ for all $t\in[0,2\varepsilon]$.
	
	In the second case $\frac{\dd}{\dd t}H(\phi_{s}(S,0))\ge0$, but then by
	symmetry of the flow $\frac{\dd}{\dd t}H(\phi_{-s}(S,0))\le0$ and
	we are back to the first case.
	\end{proof}
	
	\begin{claim}\label{claim:pulling_in_hemisphere}
	Fix $s\in\mathbb{S}^{N-1}$ and a configuration $S$, such that $s\cdot S_{i}>0$
	for all $i$. Then $H(\phi_{s}(S,t))$ is a decreasing function for
	$t\in[0,\infty]$.
	\end{claim}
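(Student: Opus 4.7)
The plan is a direct, one-line computation: differentiate $H(\phi_s(S,t))$ in $t$, substitute the ODE defining the flow, and read off the sign from the hypothesis $s\cdot S_i > 0$.

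The preparatory observation is that the open hemisphere $\{x\in\bbS^{N-1} : s\cdot x > 0\}$ is forward-invariant under $\phi_s$: differentiating and using the defining ODE gives
\begin{equation*}
\frac{\dd}{\dd t}\bigl(s\cdot \phi_s(s',t)\bigr) = s\cdot \bigl(s - (s\cdot \phi_s(s',t))\,\phi_s(s',t)\bigr) = 1 - (s\cdot \phi_s(s',t))^2 \ge 0,
\end{equation*}
so once this quantity is positive it stays positive (and in fact tends to $1$). Hence along the entire flow one has $s\cdot S_i(t) > 0$ for all $i$ and all $t \ge 0$, where I write $S_i(t) = \phi_s(S_i,t)$.

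The core of the argument is then a short expansion of $\tfrac{\dd}{\dd t}\bigl(S_i(t)\cdot S_{i+1}(t)\bigr)$ using $\dot S_i = s - (s\cdot S_i)S_i$. The cross terms of the form $(S_i\cdot s)(S_i\cdot S_{i+1})$ combine cleanly and one finds
\begin{equation*}
\frac{\dd}{\dd t}\bigl(S_i(t)\cdot S_{i+1}(t)\bigr) = \bigl(s\cdot S_i + s\cdot S_{i+1}\bigr)\bigl(1 - S_i\cdot S_{i+1}\bigr).
\end{equation*}
Since $H(S) = -\sum_i S_i\cdot S_{i+1}$, summing over $i$ yields
\begin{equation*}
\frac{\dd}{\dd t} H(\phi_s(S,t)) = -\sum_{i=1}^L \bigl(s\cdot S_i(t) + s\cdot S_{i+1}(t)\bigr)\bigl(1 - S_i(t)\cdot S_{i+1}(t)\bigr),
\end{equation*}
which is $\le 0$: the first factor is strictly positive by the hemisphere invariance, and the second is non-negative because $S_i(t)\cdot S_{i+1}(t)\le 1$ for unit vectors. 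I do not expect a real obstacle in the write-up; the argument is a two-line computation and the sole role of the hypothesis $s\cdot S_i > 0$ is precisely to sign the first factor in each summand, which in turn explains why this claim fails in the absence of the hemisphere assumption (as anticipated by Claim~\ref{claim:up_or_down}, where one must possibly replace $s$ by $-s$).
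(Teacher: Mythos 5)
Your proposal is correct and follows essentially the same route as the paper: differentiate $H$ along the flow, use forward invariance of the hemisphere $\{x : s\cdot x>0\}$, and sign the resulting expression using $s\cdot S_i>0$ and $S_i\cdot S_{i+1}\le 1$. Your per-edge factorization $\frac{\dd}{\dd t}(S_i\cdot S_{i+1})=(s\cdot S_i+s\cdot S_{i+1})(1-S_i\cdot S_{i+1})$ is just a regrouping of the paper's vertex-based sum, with the added (welcome) virtue of making the hemisphere invariance explicit via $\frac{\dd}{\dd t}(s\cdot\phi_s)=1-(s\cdot\phi_s)^2\ge 0$.
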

	
	\begin{proof}
	For all $i$, denoting $\underline{s}=(s,\dots,s)\in(\bbS^{N-1})^{L}$,
	\begin{align*}
	&-D_{i}H(S)\cdot(\underline{s}-(S\cdot\underline{s})S)_{i}  =  S_{i-1}\cdot(s-(S_{i}\cdot s)S_{i})+S_{i+1}\cdot(s-(S_{i}\cdot s)S_{i})\\
	 & \qquad  -(S_{i-1}\cdot S_{i}+S_{i+1}\cdot S_{i})\,S_{i}\cdot(s-(S_{i}\cdot s)S_{i})\\
	 & \qquad =  S_{i-1}\cdot s-(S_{i}\cdot s)(S_{i}\cdot S_{i-1})+S_{i+1}\cdot s-(S_{i}\cdot s)(S_{i}\cdot S_{i+1}).
	\end{align*}
	Therefore,  
	\begin{align*}&\sum_{i}-D_{i}H(S)\cdot(\underline{s}-(S\cdot\underline{s})S)_{i}  
	\\ & \qquad =2\Big(\sum_{i}S_{i}\Big)\cdot s-\Big(\sum_{i}(S_{i}\cdot S_{i-1})S_{i}\Big)\cdot s-\Big(\sum_{i}(S_{i}\cdot S_{i+1})S_{i}\Big)\cdot s\\
	 & \qquad =  \sum\left((1-(S_{i}\cdot S_{i-1}))S_{i}\right)\cdot s+\sum\left((1-(S_{i}\cdot S_{i+1}))S_{i}\right)\cdot s>  0.
	\end{align*}
	Note that if $s\cdot S_{i}>0$ then at all positive times $\phi_{s}(S,t)>0$
	(since spins always get closer to $s$). Therefore we can plug $\phi_{s}(S,t)$
	for $S$ in the above inequality, and obtain:
	\begin{align*}
	\frac{\dd}{\dd t}H(\phi_{s}(S,t)) & =DH\cdot\frac{\partial\phi_{s}(S,t)}{\partial t}\\
	 & =DH\cdot\left(\underline{s}-(\phi(S,t)\cdot\underline{s})\phi(S,t)\right) <0.
	\end{align*}
	\end{proof}

	\subsubsection*{Step 3: Combining all parts of the path}
	
	\begin{definition}
	Fix $T>0$, $s\in\bbS^{N-1}$, and a configuration $S^{0}$ where all
	spins are on a plane perpendicular to $s$. Let $U_{T,s,S^{0}}$ be
	the set of configurations $S$, such that:
	\begin{enumerate}
	\item If $S^0$ is constant, then under the deterministic flow $\varphi$, all spins $\varphi(S,T)_i$ are at distance at most $\arccos(0.99)$ from $S^0$.
	\item If $S^0$ is not constant
	\begin{enumerate}
	\item All spins $\varphi(S,T)_{i}$
	are at distance at most $\varepsilon$ from $S_{i}^{0}$; and
	\item $H(\phi_{s}(S,t))$ is decreasing for $t\in[0,2\varepsilon]$;
	\end{enumerate}
	for $\varepsilon$ given in Claim \ref{claim:up_or_down}.
	\end{enumerate}
	\end{definition}
	
	\begin{lemma}
		Fix $T,s,S^{0}$ as in definition above. Then there exists a map $\Psi_{T,s,S^{0}}:U_{T,s,S^{0}}\times[0,1]\to \Omega$
		such that:
		\begin{enumerate}
		\item $\Psi_{T,s,S^0}(S,0)=S$ and $\Psi_{T,s,S^0}(S,1)$ is in in the arctic $A$.
		\item $H(\Psi(S,t))$ is non-increasing for all $t$.
		\end{enumerate}
		Moreover, $v[\Psi_{T,s,S^0}]$ and $\sup_{S,t} |\entropy_{\Psi}(S,t)|$ are finite.
	\end{lemma}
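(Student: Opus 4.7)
The plan is to construct $\Psi = \Psi_{T,s,S^0}$ by concatenating (and then reparametrizing onto $[0,1]$) up to four consecutive smooth phases on the compact manifold $\Omega$, each of bounded duration and each non-increasing in $H$, with the last phase terminating in the arctic $A$. Phase I is the deterministic gradient flow $\varphi(\cdot,\tau)$ run for $\tau\in[0,T]$. Phase II, used only when $S^0$ is non-constant, is the pulling flow $\phi_s(\cdot,\tau)$ run for $\tau\in[0,2\varepsilon]$. Phase III continues $\phi_s(\cdot,\tau)$ for an additional time $T'$ chosen large enough that every spin ends up within geodesic distance $\arccos(0.99)$ of $s$. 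Finally, Phase IV is a rigid rotation $R_\alpha$, $\alpha\in[0,\alpha_0]$, carrying $s$ (or the common value of $S^0$ in the constant case) onto $\mathbf{e}_1$. In the constant case only Phases I and IV are used, since condition (1) in the definition of $U_{T,s,S^0}$ already places the endpoint of Phase I within $\arccos(0.99)$ of $S^0$.

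\emph{Energy monotonicity and endpoint.} Phase I is the gradient flow of $H$, so $\tfrac{d}{d\tau}H(\varphi(S,\tau))=-\norm{DH(\varphi(S,\tau))}^2\le 0$. Monotonicity along Phase II is built directly into the definition of $U_{T,s,S^0}$, and Claim~\ref{claim:moving_to_hemisphere} guarantees that at the end of Phase II every spin satisfies $\phi_s(\cdot)\cdot s>0$, which is precisely the hypothesis of Claim~\ref{claim:pulling_in_hemisphere}; this yields the (strict) decrease of $H$ along Phase III. Rigid rotations preserve each scalar product $S_i\cdot S_{i+1}$, so $H$ is constant along Phase IV. Reparametrizing the concatenation onto $[0,1]$ then gives $\Psi(S,0)=S$, $\Psi(S,1)\in A$ by the choice of $T'$ and $\alpha_0$, and $H(\Psi(S,t))$ non-increasing throughout.

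\emph{Speed and entropy bounds.} The speed along Phase I equals $\norm{DH}$, which is bounded on the compact $\Omega$; along Phases II--III the per-spin speed $\norm{s-(\phi_s\cdot s)\phi_s}$ is at most $1$; along Phase IV the speed is a bounded angular rate. Linear reparametrization multiplies these by the finite total duration $T+2\varepsilon+T'+\alpha_0$, so $v[\Psi]<\infty$. For the entropy, the key observation is that each phase is the time-$\tau$ map of a smooth vector field on the compact manifold $\Omega$ run for a uniformly bounded time: Phases I and II have the prescribed durations $T$ and $2\varepsilon$; Phase IV has duration $\alpha_0\le\pi$; and for Phase III the explicit formula $\phi_s(s',\tau)\cdot s=\tanh(\tau-t_0)$, combined with the lower bound $\phi_s(\cdot)\cdot s\ge\varepsilon/2$ at the end of Phase II (Claim~\ref{claim:moving_to_hemisphere}), shows that some uniform $T'=T'(\varepsilon)$ suffices. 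Consequently each phase's Jacobian determinant is a continuous function on a compact product, bounded away from $0$ and $\infty$, and the log-Jacobian of the composition is the sum of these uniformly bounded logs, which yields $\sup_{S,t}|\entropy_\Psi(S,t)|<\infty$. The main delicacy is verifying the monotonicity and hemisphere-crossing in Phase II, but this is already encoded in the definition of $U_{T,s,S^0}$ together with Claim~\ref{claim:moving_to_hemisphere}; continuity at the junctions between phases is automatic since each phase starts where the previous one ended.
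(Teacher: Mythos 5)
Your construction coincides with the paper's: the same concatenation of the gradient flow $\varphi$, the pulling flow $\phi_s$ (with monotonicity on $[0,2\varepsilon]$ built into the definition of $U_{T,s,S^0}$, hemisphere crossing from Claim~\ref{claim:moving_to_hemisphere}, and decrease afterwards from Claim~\ref{claim:pulling_in_hemisphere}), and a final energy-preserving rotation, skipping the pulling phase when $S^0$ is constant. The only cosmetic difference is that you bound the entropy by compactness of the flow Jacobians over bounded time rather than via the explicit divergence formula used in the paper, which amounts to the same estimate, so the proposal is correct and essentially identical to the paper's proof.
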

	\begin{proof}
	If $S^{0}$ is non-constant, take
	\[
	\Psi_{T,s,S^{0}}(S,t)=\begin{cases}
	\varphi(S,3T\times t) & t\in\left[0,\frac{1}{3}\right],\\
	\phi_{s}\left(\varphi(S,T),C\times\left(t-\frac{1}{3}\right)\right) & t\in\left[\frac{1}{3},\frac{2}{3}\right],\\
	\psi_{s}\left(\phi_{s}\left(\varphi(S,T),C/3\right),t-\frac{2}{3}\right) & t\in\left[\frac{2}{3},\frac{1}{3}\right],
	\end{cases}
	\]
	where $\psi$ is a rotation from $s$ to ${\rm e}_{1}$ with speed $3 d(s, {\rm e}_1)$.
	$C$ should be chosen such that $\tanh(C/3-2\varepsilon)$ is close
	enough to $1$, guaranteeing that we end up in the arctic.
	
	If $S^{0}$ is constant, we skip $\phi$ and take $\psi$ to be
	a rotation of $S^{0}$ to ${\rm e}_{1}$. 
	
	The speed $\partial_t \Psi$ can be calculated explicitly on each part of the path---in the time interval $[0,1/3]$ it is given by $\frac{1}{3T} \norm{DH}$ which is bounded on the entire $\Omega$. 
	During $[1/3,2/3]$ it is bounded by $\frac{\sqrt L}{C}$, since each coordinate of $\partial_t \phi(S,t)$ is bounded by $1$, hence $\norm{\partial_t \phi(S,t)}\le \sqrt L$.
	Finally, in the last interval the speed is bounded by $3\pi \sqrt L$, since during time $1/3$ each coordinate crosses distance at most $\pi$.
	
	In order to show that the entropy is bounded we use the fact that for a flow given by an equation of the type 
	\begin{align*}
	\partial_t \Phi(S,t) &= f(\Phi(S,t)),\\
	\Phi(S,0)&=S,
	\end{align*}
	the entropy production is the divergence of $f$:
	\begin{equation}
	\entropy_\Phi(S,t) = \int\limits_0^t \text{div} f(\Phi(S,u)) du,
	\end{equation}
	see, e.g., \cite[Section 8.2]{Teschl12ODE}. 
	In the first part of the path $\text{div}f$ is bounded by $3T \sup |\Delta H|$, in the second part by $NL$, and in the third part it is $0$.
	Since composing the paths results in adding the entropies, the overall entropy is bounded.
	\end{proof}

	\begin{proof}[Proof of Theorem \ref{th:periodic3}]
	Since all limit points of the deterministic dynamics are on a great
	circle, any configuration $S$ belongs to some set $U_{T,s,S^{0}}$.
	These are open sets, and $\Omega$ is compact, therefore there
	exists a finite cover $U_{1},\dots,U_{K}$. We set for $i=1,\dots,K$ the function $\Phi_i$ to be equal $\Psi_{T,s,S^0}$, for $T,s,S^0$ such that $U_i = U_{T,s,S^0}$.
	The hypotheses of Lemma \ref{lem:pathargument} are then satisfied, with finite $v$ and $\entropy$ (which do not depend on $\beta$), and $\Delta H \le 0$.
	\end{proof}

	\subsection{Explicit construction for $N=3$}
	
	When $N=3$ we are able to construct the path directly, without using a compactness argument. This enables us to obtain an explicit (though not optimal) estimate on the constant $C(L)$ in Theorem \ref{th:periodic3}.

	The path consists of three parts: first, we align the spins so that they fall close to a plane. We then contract the spins towards a well chosen pole. This second step is where the crucial difference between $O(3)$ and $O(2)$ enters. Finally, we rotate the whole configuration to get it close to ${\rm e}_1$ and into the arctic.
	
	The three steps are summarized in Lemmas~\ref{lem:plane_concentration},~\ref{lem:pole_contraction}, and~\ref{lem:final_rotation}. We will need a few preparations. The first is stating that ``if all spins are close to \emph{some} great circle, they are also close to a great circle \emph{in a fixed finite set} of great circles''.
	
	\begin{lemma}
		\label{lem:cover_sphere}
		There exists $C>0$ such that for any $\epsilon>0$, one can find $K\leq C\epsilon^{-2}$ integer, and a set $\{v_1,\dots, v_K\}\subset\bbS^{2}$ such that for any $L$, and any collection $S_1,\dots, S_L\in \bbS^2$ satisfying $|S_i\cdot s| \leq \epsilon$ for all $i\in \{1,\dots, L\}$ and some fixed $s\in \bbS^2$, there exits $k\in \{1,\dots, K\}$ such that $|S_i\cdot v_k|<2\epsilon$ for all $i\in \{1,\dots,L\}$.
	\end{lemma}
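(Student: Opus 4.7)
The plan is a straightforward covering/net argument on the sphere $\bbS^2$. The idea is that the condition $|S_i \cdot s| \leq \epsilon$ only depends continuously on $s$, so replacing $s$ by a close-by vector $v_k$ from a fixed finite net changes the inner products by only a small amount.

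First, I would choose $\{v_1,\dots,v_K\}$ to be an $\epsilon$-net of $\bbS^2$ with respect to the Euclidean norm in $\bbR^3$, strict in the sense that every point of $\bbS^2$ lies within Euclidean distance strictly less than $\epsilon$ from some $v_k$. A standard volume-packing argument (comparing the area of $\bbS^2$ with the area of a spherical cap of angular radius of order $\epsilon$) yields such a net of cardinality $K \leq C\epsilon^{-2}$ for an absolute constant $C$.

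Now given $s \in \bbS^2$ and $S_1,\dots,S_L \in \bbS^2$ with $|S_i\cdot s|\leq \epsilon$ for all $i$, pick $k$ such that $\|v_k - s\| < \epsilon$. Then by the Cauchy--Schwarz inequality and $\|S_i\|=1$,
\begin{equation*}
|S_i \cdot v_k| \;\leq\; |S_i \cdot s| + |S_i \cdot (v_k - s)| \;\leq\; \epsilon + \|v_k - s\| \;<\; 2\epsilon,
\end{equation*}
which is exactly the desired conclusion, uniformly in $L$ and in the collection $S_1,\dots,S_L$.

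There is no real obstacle; the only mild point to watch is the strict versus non-strict inequality in the conclusion, which is why I would take the net to be strictly $\epsilon$-dense. Note also that the argument uses nothing about great circles directly: the fact that $|S_i\cdot s|\leq \epsilon$ expresses precisely the condition ``$S_i$ is within distance $\epsilon$ of the great circle orthogonal to $s$'' (via $\sin$ of the geodesic distance to the equator), and the Cauchy--Schwarz bound transports this condition from $s$ to any nearby unit vector $v_k$.
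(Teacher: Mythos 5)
Your proof is correct and follows essentially the same route as the paper: take a finite set $\{v_1,\dots,v_K\}$ of cardinality $O(\epsilon^{-2})$ that is strictly $\epsilon$-dense in $\bbS^2$ (the paper projects a grid on a cube, you use a volume-packing bound, which is immaterial) and then transfer the hypothesis from $s$ to a nearby $v_k$ via $|S_i\cdot v_k|\leq |S_i\cdot s|+|S_i\cdot(v_k-s)|<2\epsilon$. Your remark on strict density handling the $\leq\epsilon$ versus $<2\epsilon$ inequalities is a fine (and correct) point of care; nothing further is needed.
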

	\begin{proof}
		Let $\epsilon>0$. Let $A\subset \bbS^2$ be such that for every $s\in \bbS^2$, there is $v\in A$ such that $|s-v|<\epsilon$. One can find such a set containing at most $C\epsilon^{-2}$ points with $C$ universal (for example, by projecting on $\bbS^2$ an $\epsilon/2$ mesh-size grid on the boundary of the cube of side $2$ centred at $0$). Then for such a set $A$, for any $s\in \bbS^2$, there is $v\in A$ with $|v-s|<\epsilon$. For this $v$, one has $|S\cdot v|\leq |S\cdot (v-s)| + |S\cdot s|\leq \epsilon + |S\cdot s|$ for every $S\in \bbS^2$, so
		\begin{equation*}
			|S\cdot s |<\epsilon \implies |S\cdot v|< 2\epsilon,
		\end{equation*}which gives the claim.
	\end{proof}
	Fix $\sin(\frac{\pi}{16})\geq \epsilon>0$, and let $V_{\epsilon} = \{v_1,\dots,v_{K}\}\subset \bbS^2$ be a set whose existence is guaranteed by the previous Lemma (with $K\leq C\epsilon^{-2}$).
	
	For $s\in \bbS^2$, define then
	\begin{equation*}
		D_1(\epsilon,s) = \{S\in \Omega_L:\ |S_i\cdot s|<\epsilon,\ \forall i=1,\dots,L \}.
	\end{equation*}
	Define also
	\begin{equation*}
		\Omega_L^{\pm}(s,\epsilon) = \{S\in \Omega_L:\ S_i\cdot \pm s \geq 1-2\arcsin(\epsilon)^2\,,\; \forall i=1,\dots,L \}.
	\end{equation*}

	The three steps are then represented by the following three lemmas. For the first two of them, the precise construction and proof is given in the next subsections. 
	\begin{lemma}
		\label{lem:plane_concentration}
		The application $\varphi:\Omega_L\times [0,\tau]\to \Omega_L$ with $\tau = L -1 - \epsilon$, to be constructed in Section~\ref{sec:local_alignement}, is such that
		\begin{enumerate}
			\item $\varphi(S,0) =S$ and $\varphi(S,\tau)\in D_1(\epsilon,v)$ for some $v\in V_{\epsilon}$;
			\item $t\mapsto \varphi(S,t)$ is continuous, and piecewise differentiable;
			\item $t\mapsto H^{\periodic}_L(\varphi(S,t))$ is non-increasing;
			\item $\norm{\dot{\varphi}(S,t)}^2\leq 4\pi^2 L$;
			\item letting $f_t(S) = \varphi(S,t)$, one has $\Big|\frac{d (f_t)_{\#}\nu_L }{d\nu_L} \Big|\leq \Big(\frac{L}{\epsilon}\Big)^{L}$ for every $t\in [0,\tau]$ and $S$ in the image of $f_t$.
		\end{enumerate}
	\end{lemma}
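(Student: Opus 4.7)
The plan is to construct $\varphi$ as an iterative, one-spin-at-a-time gradient descent on $\Omega_L$ with $H = H^{\periodic}_L$. Split $[0,\tau]$ with $\tau = L-1-\epsilon$ into $L-1$ phases of length $1 - \epsilon/(L-1)$; during phase $k \in \{1,\dots,L-1\}$, run the ODE $\dot S_k = -D_k H$ on $\mathbb{S}^{N-1}$ while freezing all other spins. A simultaneous gradient flow $\partial_t \varphi = -DH(\varphi)$ is an appealing first attempt, but its Jacobian bound from Liouville's formula is $e^{O(L\tau)} = e^{O(L^2)}$, too weak for property~(5); the one-spin variant concentrates the volume distortion phase by phase.

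Properties~(2), (3), and~(4) are then routine. Continuity and piecewise differentiability are clear from the definition, with derivatives jumping only at phase boundaries. Monotonicity of $H$ along each phase follows because phase $k$ is a gradient descent in the single variable $S_k$, and at phase boundaries the configuration is unchanged. The explicit formula $-D_iH = S_{i-1}+S_{i+1} - (S_{i-1}\cdot S_i + S_{i+1}\cdot S_i)S_i$ from the soft argument gives $\|D_iH\| \le 2$, and since only one spin moves at any instant, $\|\dot\varphi\|^2 \le 4 \le 4\pi^2 L$. For property~(5), during phase $k$ only the $(N-1)$-dimensional $S_k$-block of the Jacobian changes; the divergence of the single-spin vector field $S_k \mapsto -D_k H$ on $\mathbb{S}^{N-1}$ is bounded by an absolute constant $C$, so Liouville's formula gives a per-phase volume distortion at most $e^C$. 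Multiplying over $L-1$ phases yields $\bigl|d(f_t)_{\#}\nu_L/d\nu_L\bigr| \le e^{C(L-1)} \le (L/\epsilon)^L$, provided $\epsilon \le L e^{-C}$, which is harmless since $\epsilon$ will eventually be taken small.

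The main obstacle is property~(1): I need to show that a \emph{single} Gauss--Seidel sweep of $L-1$ one-spin gradient steps suffices to bring the entire chain within $\epsilon$ of a common $2$-plane drawn from the finite list $V_\epsilon$. The plan is to argue inductively that after phase $k$ the partial chain $S_1,\dots,S_{k+1}$ lies within $O(k\epsilon/L)$ of a $2$-plane $P_k$: the fixed point $D_kH = 0$ forces $S_k$ into the plane spanned by $S_{k-1}$ and $S_{k+1}$, so each phase only tilts the ambient plane by an amount controlled by the residual $\epsilon/L$ slack in the phase duration. The delicate points are (i) quantifying the rate at which phase $k$ drives $S_k$ to within $\epsilon/L$ of its plane-fixed-point configuration --- this may be slow when $\|S_{k-1}+S_{k+1}\|$ is small, requiring a mild case analysis or a local tweak of the flow near near-antipodal pairs --- and (ii) handling the periodic closure at $i=L$, which imposes a global consistency that could be violated by the accumulated tilts. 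One expects to absorb the closure defect into a small enlargement of $\epsilon$ in the invocation of Lemma~\ref{lem:cover_sphere} at the end.
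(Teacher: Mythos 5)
Your items (2)--(5) are fine, but property (1) -- the whole point of the lemma -- is not proved, and the plan you sketch for it does not work as stated. In your construction each phase has length $1-\epsilon/(L-1)$, a \emph{constant}, while the single-spin gradient flow $\dot S_k=-D_kH$ only converges to the plane spanned by $S_{k-1},S_{k+1}$ asymptotically, at an exponential rate proportional to $\norm{S_{k-1}+S_{k+1}}$. In constant time you reduce the angular defect of $S_k$ by at most a constant factor (and by almost nothing when $S_{k-1}\approx -S_{k+1}$), so there is no way to conclude that after phase $k$ the spin $S_k$ is within $O(\epsilon/L)$ of anything; reaching accuracy $\epsilon/L$ would require phases of length of order $\log(L/\epsilon)$, which destroys both the time budget $\tau=L-1-\epsilon$ and, via Liouville, the Jacobian bound $(L/\epsilon)^L$. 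A second, independent problem is that even if phase $k$ did align $S_k$ exactly with the plane of its current neighbours, phase $k+1$ then moves $S_{k+1}$ by an order-one amount, so the previously achieved coplanarity of $S_1,\dots,S_{k+1}$ is not preserved: your induction tracks a plane $P_k$ that can tilt by order one per phase, not by $O(\epsilon/L)$, and the claim that the tilt is ``controlled by the residual $\epsilon/L$ slack'' has no justification. The periodic-closure issue you mention is comparatively minor, but it too is left unresolved.

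For contrast, the paper's construction avoids both problems structurally rather than analytically. At step $k$ it does not flow a single spin; it applies one common rotation about the fixed axis $S_1$ to the whole already-treated block $S_2,\dots,S_{k-1}$, chosen so that $S_{k-1}$ lands (up to the small time shortfall $\epsilon/L$) in the plane spanned by $S_1$ and $S_k$. Because the block moves rigidly about $S_1$, all internal bonds and the bond $(1,2)$ are unchanged, only the $(k-1,k)$ bond changes (and its energy decreases since an azimuthal angle in $[-\pi,\pi]$ is shrunk towards $0$), and the prefix's near-coplanarity with $S_1$ is exactly preserved, so the per-step errors $O(\epsilon/L)$ simply accumulate to $O(\epsilon)$. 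Moreover, in coordinates adapted to the axis $S_1$ the step is the affine map $\theta_i\mapsto\theta_i-t\theta_{k-1}$, whose Jacobian is exactly $1-t\ge\epsilon/L$, giving the $(L/\epsilon)^L$ bound with no flow-divergence estimate. If you want to salvage a Gauss--Seidel-type argument you would need some analogous mechanism that (i) reaches the target plane in bounded time and (ii) preserves the alignment already achieved; the gradient sweep has neither.
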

	
	\begin{lemma}
		\label{lem:pole_contraction}
		Let $s\in \bbS^2$.
		The application $\phi_s: D_1(\epsilon,s)\times [0,\pi/2-\arcsin(\epsilon)/2] \to \Omega_L$, to be constructed in Section~\ref{sec:contraction_towards_pole}, is such that
		\begin{enumerate}
			\item $\phi_s(S,0) =S$ and $\phi_s(S,\pi/2-2\arcsin(\epsilon))\in \Omega_L^+(s,\epsilon)\cup \Omega_L^-(s,\epsilon)$;
			\item $t\mapsto \phi_s(S,t)$ is continuous, and differentiable;
			\item $t\mapsto H^{\periodic}_L(\phi_s(S,t))$ is non-increasing;
			\item $\norm{\dot{\phi}_s(S,t)}^2\leq 4\pi^2 L$;
			\item letting $f_t(S) = \phi_s(S,t)$, one    has $\Big|\frac{d (f_t)_{\#}\nu_L }{d\nu_L} \Big|\leq 2^{L}$,  for all $t\in [0,\pi/2-2\arcsin(\epsilon)]$ and $S$ in the image of $f_t$.
		\end{enumerate}
	\end{lemma}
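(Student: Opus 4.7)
My plan is to define $\phi_s$ by rotating each spin individually at unit angular speed toward a pole $\sigma s\in\{+s,-s\}$, where the sign $\sigma=\sigma(S)\in\{\pm 1\}$ depends on the configuration. Writing each spin in spherical coordinates with north pole $\sigma s$,
\[
S_i=\cos(\theta_i)\sigma s+\sin(\theta_i)u_i,\qquad u_i\in(\sigma s)^\perp\cap\bbS^2,\qquad \theta_i\in(\pi/2-\arcsin\epsilon,\pi/2+\arcsin\epsilon)
\]
(the range of $\theta_i$ coming from $|S_i\cdot s|<\epsilon$), I set
\[
\phi_s(S,t)_i=\cos(\theta_i-t)\sigma s+\sin(\theta_i-t)u_i.
\]
Properties (2) and (4) are then immediate: the formula is smooth in $(S,t)$, and each spin moves at unit angular speed, so $\norm{\dot{\phi}_s(S,t)}^2=L\leq 4\pi^2 L$. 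For the endpoint condition (1), at $t=T:=\pi/2-2\arcsin\epsilon$ each angle $\theta_i-T$ lies in $(\arcsin\epsilon,3\arcsin\epsilon)$, so $\phi_s(S,T)_i\cdot\sigma s=\cos(\theta_i-T)\geq 1-2\arcsin(\epsilon)^2$ for $\epsilon\leq\sin(\pi/16)$, placing $\phi_s(S,T)$ in $\Omega_L^{\sigma}(s,\epsilon)$. For the Jacobian bound (5), the $\bbS^2$ volume form in the chosen spherical coordinates is $\sin\theta\,d\theta\,du$; the push-forward density per spin reduces to $\sin(\theta+t)/\sin(\theta)$ for $\theta$ the new colatitude, which stays bounded by $2$ throughout the relevant range of $(\theta,t)$, so the full Jacobian is at most $2^L$.

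The central technical point is the energy monotonicity (3). A direct computation gives
\[
\frac{d}{dt}\bigl(\phi_s(S,t)_i\cdot\phi_s(S,t)_{i+1}\bigr)=(1-u_i\cdot u_{i+1})\,\sin(\theta_i+\theta_{i+1}-2t),
\]
with the prefactor $1-u_i\cdot u_{i+1}\geq 0$ always. Hence $H\circ\phi_s$ is non-increasing as soon as
\[
F(t):=\sum_{i=1}^L(1-u_i\cdot u_{i+1})\sin(\theta_i+\theta_{i+1}-2t)\geq 0.
\]
Because each $\theta_i+\theta_{i+1}\in(\pi-2\arcsin\epsilon,\pi+2\arcsin\epsilon)$, for $t\geq\arcsin\epsilon$ every argument of the sine lies in $[2\arcsin\epsilon,\pi]$ and $F(t)\geq 0$ is automatic. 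The only delicate regime is the short initial window $t\in[0,\arcsin\epsilon]$, where pairs with $\theta_i+\theta_{i+1}>\pi$ can contribute negative summands, and this is where the sign $\sigma(S)$ must be chosen carefully.

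I would handle this initial window using the dichotomy underlying Claim~\ref{claim:up_or_down}. Swapping $\sigma\leftrightarrow-\sigma$ sends $\theta_i\mapsto\pi-\theta_i$ and preserves $u_i$, so it flips $\sin(\theta_i+\theta_{i+1})$ without touching the weights; consequently $F^{+}(0)+F^{-}(0)=0$, where $F^{\pm}$ denote $F$ computed under the two sign choices. In particular, at least one of $F^{\pm}(0)$ is non-negative, and I define $\sigma(S)$ to pick it. A quantitative second-order Taylor expansion at $t=0$ (the analogue of the calculation $\frac{d^2}{dt^2}H<0$ carried out in the soft argument for configurations on a plane) then extends $F^{\sigma(S)}\geq 0$ throughout the entire initial window provided $\epsilon$ is small enough. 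The main obstacle is making this extension rigorous uniformly in $S$ while preserving the continuity of $\phi_s$ in $S$ across the set where $F^+(0)=F^-(0)=0$; on that set both sign choices give the same flow by the symmetry above, so continuity is maintained. A cleaner alternative is to split $D_1(\epsilon,s)$ into the two open subsets $\{F^\pm(0)>0\}$ and construct a separate path on each, which only multiplies the number $K$ of paths fed into Lemma~\ref{lem:pathargument} by a constant factor and so does not affect the bound $C(L)=e^{CL\log L}$ in Theorem~\ref{th:periodic3}.
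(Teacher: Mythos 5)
Your construction coincides with the paper's: the paper defines the flow by shifting every colatitude at unit speed toward one of the two poles, chooses the pole according to the sign of $\frac{d}{dt}\bigl(-H^{\periodic}_L(X_t)\bigr)\big|_{t=0}$ (your $F^{\pm}(0)$ dichotomy), and settles the delicate initial window exactly through the sign of the second derivative, $\frac{d^2}{dt^2}\bigl(-H^{\periodic}_L(X_t)\bigr)=2\sum_i\cos(\theta_i+\theta_{i+1}+2t)\bigl(1-\cos(\theta_i'-\theta_{i+1}')\bigr)\geq 0$ for $|\theta_i|\leq\arcsin\epsilon\leq\pi/16$ and small $t$, which is a pointwise computation valid uniformly in $S$, so the ``extension obstacle'' you flag is not an issue, and your fallback of splitting into the two sign cases is precisely how the paper labels its sets $U_{\sigma,v}$, $\sigma\in\{-,+\}$, in Section~\ref{sec:spectral_gap}. (The two constant-level slips in your sketch --- the endpoint is only within $3\arcsin\epsilon$ of the pole, and the per-spin Jacobian ratio is really of order $\epsilon^{-1}$ rather than $2$ --- are shared verbatim with the paper and immaterial for Theorem~\ref{th:periodic3}.)
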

	
	\begin{lemma}
		\label{lem:final_rotation}
		Let $s\in \bbS^2$. There exist $\tau=\tau(s)\in[0,\pi]$, and $\psi_s: \Omega_L^+(s,\epsilon)\times [0,\tau] \to \Omega_L$ such that
		\begin{enumerate}
			\item $\psi_s(S,0) =S$ and $\psi_s(S,\tau)\in \Omega_L^+({\rm e}_1,\epsilon)$;
			\item $t\mapsto \psi_s(S,t)$ is continuous, and differentiable;
			\item $t\mapsto H^{\periodic}_L(\psi_s(S,t))$ is constant;
			\item $\norm{\dot{\psi}_s(S,t)}^2\leq 4\pi^2 L$;
			\item letting $f_t(S) = \psi_s(S,t)$, one has $\Big|\frac{d (f_t)_{\#}\nu_L }{d\nu_L} \Big| = 1 $ for every $t\in [0,\tau]$ and $S$ in the image of $f_t$.
		\end{enumerate}
	\end{lemma}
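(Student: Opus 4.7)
The plan is to take $\psi_s$ to be a time-dependent \emph{rigid rotation} of $\bbS^2$ applied simultaneously to every spin. Since $H^{\periodic}_L$ depends on the configuration only through the inner products $S_i\cdot S_{i+1}$, and the product Lebesgue measure $\nu_L$ is invariant under the diagonal action of $SO(3)$, this single design choice will give properties (3) and (5) for free, independently of how the rotation is scheduled in time.

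Explicitly, I would set $\tau = \arccos(s\cdot {\rm e}_1)\in[0,\pi]$, pick a unit vector $n\in\bbS^2$ orthogonal to both $s$ and ${\rm e}_1$ (taking $n=(s\times{\rm e}_1)/\norm{s\times{\rm e}_1}$ when $s\neq \pm{\rm e}_1$, any $n\perp {\rm e}_1$ otherwise; in the case $s={\rm e}_1$ one has $\tau=0$ and the construction is trivial), and let $R_t\in SO(3)$ be the rotation by angle $t$ about the axis $n$, so that $R_0 = I$ and $R_\tau s = {\rm e}_1$. Then I define
\begin{equation*}
\psi_s(S,t) = (R_t S_1, R_t S_2, \ldots, R_t S_L).
\end{equation*}
Since $R_\tau$ is an isometry of $\bbS^2$, one has $R_\tau S_i\cdot {\rm e}_1 = R_\tau S_i\cdot R_\tau s = S_i\cdot s \geq 1-2\arcsin(\epsilon)^2$, which together with $\psi_s(S,0)=S$ gives property (1). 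Smoothness of $t\mapsto R_t$ yields (2). For the speed estimate (4), the generator of the rotation acts by $\dot R_t x = n\times R_t x$, whose Euclidean norm is at most $1$ for any $x\in\bbS^2$, hence $\norm{\dot\psi_s(S,t)}^2 = \sum_{i=1}^L\norm{n\times R_t S_i}^2 \leq L$, well under the stated budget $4\pi^2 L$.

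There is essentially no hard part here: rigid rotations preserve both the Hamiltonian and the reference measure by construction, so this final leg of the canonical path contributes zero to both the energy gap $\Delta H_\Phi$ and the entropy $\entropy_\Phi$. The only minor point requiring care is the antipodal case $s=-{\rm e}_1$, where the rotation axis $n$ is not uniquely determined; any unit vector perpendicular to ${\rm e}_1$ works, and one still has $\tau=\pi$ as required. Because $\tau \leq \pi$ in all cases, $\psi_s$ is defined on a uniformly bounded time interval, which is important later when composing this rotation with the flows $\varphi$ and $\phi_s$ produced by Lemmas~\ref{lem:plane_concentration} and~\ref{lem:pole_contraction} into the full canonical path to be fed into Lemma~\ref{lem:pathargument}.
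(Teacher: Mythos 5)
Your proposal is correct and is essentially identical to the paper's own (one-line) proof: take $\tau$ to be the angle between $s$ and ${\rm e}_1$ and let $\psi_s(S,t)$ be the simultaneous rigid rotation of all spins by angle $t$ in the plane spanned by $s$ and ${\rm e}_1$, so that invariance of the Hamiltonian and of $\nu_L$ under the diagonal $SO(3)$ action give properties (3) and (5), and the unit angular speed gives (4). Your additional care with the degenerate cases $s=\pm{\rm e}_1$ is a harmless refinement of the same argument.
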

	\begin{proof}[Proof of Lemma \ref{lem:final_rotation}]
		Simply set $\tau$ to be the angle between $s$ and ${\rm e}_1$, $R_t$ the rotation of angle $t$ in the plane spanned by $s,{\rm e}_1$ (with positive direction from $s$ to ${\rm e}_1$). Setting $\psi(S,t) = R_t (S)$ does the job.
	\end{proof}
	
	In the next sections, we prove Lemma~\ref{lem:plane_concentration} and~\ref{lem:pole_contraction}. We then use them together with Lemma~\ref{lem:pathargument} to bound the spectral gap in Section~\ref{sec:spectral_gap}.
	
	\subsection{Local alignment: proof of Lemma~\ref{lem:plane_concentration}}
	\label{sec:local_alignement}
	
	The path we will use will align $S_1,S_2,S_3$ by rotating $S_2$ around the axis spanned by $S_1$ to end on the geodesic between $S_1$ and $S_3$, then align $S_1,\dots,S_4$ by rotating the pair $S_2,S_3$ around the axis spanned by $S_1$ so that $S_3$ ends on the geodesic between $S_1$ and $S_4$, and so on and so forth. We will first consider a sequence of mappings which is better expressed in a suitable choice of coordinates. For $1< k\leq L+1$, consider the following coordinate system. Let $v_1 = S_1$. If $S_{k} \notin \{v_1,-v_1\} $, let
	\begin{equation*}
		v_2 = \frac{S_{k} - (S_{k} \cdot v_1) v_1}{\sqrt{1-(S_{k} \cdot v_1)^2} },
	\end{equation*}otherwise, let $v_2$ be any (fixed) norm one vector orthogonal to $v_1$. Let then $v_3$ be any vector such that $(v_1,v_2,v_3)$ is an orthonormal basis. Express then a point $s$ on the sphere as
	\begin{equation*}
		s = (s\cdot v_1) v_1 + \sqrt{1-(s\cdot v_1)^2}\cos(\theta)v_2 + \sqrt{1-(s\cdot v_1)^2}\sin(\theta)v_3
	\end{equation*}with $\theta\in [-\pi,\pi)$ the angle between $v_2$ and the projection of $s$ in the $(v_2,v_3)$-plane (so that $\theta = 0$ when $s= S_k$). Write $u_i= S_i\cdot v_1 \in[-1,1]$, and $\theta_i$ for the angle $\theta$ corresponding to $s=S_i$. With this choice of coordinates, one has
	\begin{equation*}
		\sum_{i=1}^{k-1}S_i\cdot S_{i+1} = \sum_{i=1}^{k-1} \big(u_iu_{i+1} + \sqrt{1-u_i^2}\sqrt{1-u_{i+1}^2}\cos(\theta_i-\theta_{i+1})\big),
	\end{equation*}where $\theta_k = 0$.
	
	Define then the path to be
	\begin{equation*}
		\big(\varphi_{k}(S,t)\big)_i = \begin{cases}
			S_i & \text{ if } k\leq i \leq L+1,\\
			u_i v_1 + \sqrt{1-u_i^2}\cos(\theta_i-t\theta_{k-1})v_2 + \sqrt{1-u_i^2}\sin(\theta_i-t\theta_{k-1})v_3 & \text{ if } 1< i < k.
		\end{cases}
	\end{equation*}
	
	As, in the chosen coordinates, $d\nu_L(S) = d \nu(S_1)\prod_{i=k}^L d\nu(S_i) \prod_{i=2}^{k-1}du_i d\theta_i $, one has
	\begin{equation*}
		\Big|\frac{d (f_t)_{\#}\nu_L }{d\nu_L} \Big| = \frac{1}{1-t},
	\end{equation*}where $f_t(S) = \varphi_k(S,t)$.
	
	Note then that
	\begin{align*}
		-H^{\periodic}_L(\varphi_{k}(S,t)) &= \sum_{i=1}^{k-1} \big(\varphi_{k}(S,t)\big)_i\cdot \big(\varphi_{k}(S,t)\big)_{i+1} +\sum_{i=k}^{L} S_i\cdot S_{i+1}\\
		&= \sum_{i=1}^{k-2}S_i\cdot S_{i+1} + \big(u_{k-1}u_{k} + \sqrt{1-u_{k-1}^2}\sqrt{1-u_{k}^2} \cos((1-t)\theta_{k-1})\big) +\sum_{i=k}^{L} S_i\cdot S_{i+1}.
	\end{align*}As the angles are in $[-\pi,\pi]$, $-H^{\periodic}_L(\varphi_{k}(S,t))\geq -H^{\periodic}_L(S)$.
	
	One has that $\big(\varphi_{k}(S,1)\big)_{k-1}$ is in the plane spanned by $S_1$ and $S_k$, and that the distance to that plane of $\big(\varphi_{k}(S,1-\epsilon)\big)_{k-1}$ is at most
	\begin{equation*}
		|\big(\varphi_{S}^{(k)}(1-\epsilon)\big)_{k-1} \cdot v_3|\leq \sin(\epsilon \pi).
	\end{equation*}
	
	Let $\tau_0 = 0$, $\tau_k = \tau_{k-1} + 1-\epsilon/L$. Define
	\begin{gather*}
		\varphi(S,0) = S,\\
		\varphi(S,t) = \varphi_{k}(\varphi(S,\tau_{k-1}),t-\tau_{k-1}),\ t\in [\tau_{k-1},\tau_k].
	\end{gather*}
	This defines a continuous, and piecewise differentiable function of $t\in[0,\tau_{L-1}]$, where $\tau_{L-1}=L-1-\epsilon$. By triangular inequality, the distance of $(\varphi(S,\tau_{L-1}))_i$ to the plane spanned by $S_1$ and $S_L$ is at most $\epsilon$. By the previous observations, the Hamiltonian is decreasing in $t$ and
	\begin{equation*}
		\Big|\frac{d (f_t)_{\#}\nu_L }{d\nu_L} \Big| \leq (L\epsilon^{-1})^{L},
	\end{equation*}where $f_t(S) = \varphi(S,t)$.
	
	It remains to prove the bound on $\norm{\dot{\varphi}(S,t)}^2$. On each time interval $[\tau_{k-1},\tau_k]$, the mapping is a rotation of the $k$ first spins around the axis spanned by $S_1$, so $\norm{\dot{\varphi}(S,t)}^2 \leq 4\pi^4 L$.

	\subsection{Contraction towards a pole: proof of Lemma~\ref{lem:pole_contraction}}
	\label{sec:contraction_towards_pole}
	
	For $s\in\bbS^2$, parametrize $S\in\Omega_L$ by $(\theta,\theta')\in ([-\frac{\pi}{2},\frac{\pi}{2}]\times [0,2\pi])^L$ via $S_i = \cos(\frac{\pi}{2} + \theta_i)s + \sin(\frac{\pi}{2} +\theta_i)\cos(\theta_i')v_1 + \sin(\frac{\pi}{2}+\theta_i)\sin(\theta_i')v_2$ where $(s,v_1,v_2)$ form an orthonormal basis of $\R^3$. Define the configuration $X_t=X_t(S)=X_t(\theta,\theta')$ via
	\begin{equation*}
		\big(X_t(\theta,\theta')\big)_i = \cos(\frac{\pi}{2} + \theta_i+t)s + \sin(\frac{\pi}{2} +\theta_i+t)\cos(\theta_i')v_1 + \sin(\frac{\pi}{2}+\theta_i+t)\sin(\theta_i')v_2
	\end{equation*}
	
	Using $\sin(\frac{\pi}{2} + a) = \cos(a)$ and $\cos(\frac{\pi}{2} + a) = -\sin(a)$, one obtains
	\begin{equation*}
		-H^{\periodic}_L(X_t(\theta,\theta')) = \sum_{i=1}^L \sin( \theta_i+t)\sin( \theta_{i+1}+t) +
		\cos(\theta_i+t)\cos(\theta_{i+1}+t) \cos(\theta_i'-\theta_{i+1}').
	\end{equation*}So, as $\sin a\cos b + \sin b\cos a = \sin(a+b)$,
	\begin{equation*}
		\frac{d}{dt} -H^{\periodic}_L(X_t(\theta,\theta')) = \sum_{i=1}^L \sin( \theta_{i}+\theta_{i+1}+2t) (1-\cos(\theta_i'-\theta_{i+1}')).
	\end{equation*}Moreover,
	\begin{equation*}
		\frac{d^2}{dt^2} -H^{\periodic}_L(X_t(\theta,\theta')) = 2\sum_{i=1}^L \cos( \theta_{i}+\theta_{i+1}+2t) (1-\cos(\theta_i'-\theta_{i+1}')).
	\end{equation*}
	
	First observe three things:
	\begin{enumerate}
		\item if $0\leq \theta_i +t \leq \pi/2$ for all $i$s, then $\frac{d}{dt} -H^{\periodic}_L(X_t(\theta,\theta'))\geq 0$;
		\item if $-\pi/2\leq \theta_i -t \leq 0$ for all $i$s, then $\frac{d}{dt} -H^{\periodic}_L(X_{-t}(\theta,\theta'))\geq 0$;
		\item if $|\theta_i|< \frac{\pi}{8}$ for all $i$s and $t<\frac{\pi}{8}$, $\cos( \theta_{i}+\theta_{i+1}+2t) \geq 0$ for all $i$s and so $\frac{d^2}{dt^2} -H^{\periodic}_L(X_t(\theta,\theta'))\geq 0$.
	\end{enumerate}
	
	When $S\in D_1(\epsilon,s)$, we have $|\theta_i|\leq \arcsin(\epsilon)$. Therefore, by our constraint on $\epsilon$, the sign of $\frac{d}{dt} -H^{\periodic}_L(X_t(\theta,\theta'))$ at $t=0$ determines the desired monotonicity for either $t\in [0,\pi/2 - 2\arcsin(\epsilon)]$ or $-t\in [0,\pi/2 - 2\arcsin(\epsilon)]$: indeed, the third observation above gives the monotonicity for all $|t|\leq \pi/8$, and the first two observations show the monotonicity for $|t|>\pi/8$ since $|\theta_i|\leq \arcsin(\epsilon)$.
	
	We can now construct the path $\phi_s$ as follows:
	\begin{itemize}
		\item If $S\in D_1(\epsilon,s)$ is such that $\frac{d}{dt} -H^{\periodic}_L(X_t(S))\big|_{t=0} \geq 0$, set
		\begin{equation*}
			\phi_s(S,t) = X_t(S),
		\end{equation*}for $t\in [0,\pi/2- 2\arcsin(\epsilon)]$.
		\item If $S\in D_1(\epsilon,s)$ is such that $\frac{d}{dt} -H^{\periodic}_L(X_t(S))\big|_{t=0} < 0$, set
		\begin{equation*}
			\phi_s(S,t) = X_{-t}(S),
		\end{equation*}for $t\in [0,\pi/2- 2\arcsin(\epsilon)]$.
	\end{itemize}
	As $\epsilon<\sin(\frac{\pi}{16})$, the first three points of Lemma~\ref{lem:pole_contraction} are obvious (the first one follows from $(\phi_s(S,t))_i\cdot s = \cos(\pi/2 + \theta_i \pm t)$ depending on the case). To obtain the one-before-last, observe that the transformation is a rotation of each coordinates, so $\norm{\dot{\phi}_s(S,t)}^2 \leq 4\pi^2 L$.
	
	The last point of Lemma~\ref{lem:pole_contraction}  follows from
	\begin{equation*}
		d\nu_L(S) = \prod_{i=1}^L \cos(\theta_i)d\theta_i d\theta_i',
	\end{equation*}and $\frac{\cos(\theta_i-t)}{\cos(\theta_i)}\leq \frac{1}{\cos(\pi/16)} \leq 2$ for $\theta_i,t$ in the concerned regions.
	
	\subsection{Spectral Gap}
	\label{sec:spectral_gap}
	
	Let $\epsilon>0$. Rather than parametrizing the sets $U_i$'s of Lemma~\ref{lem:pathargument} with numbers, we label them with pairs $(\sigma,v)$ with $\sigma\in \{-,+\}$ and $v\in V_{\epsilon}$, where $V_{\epsilon}$ is given as right after Lemma~\ref{lem:cover_sphere}. Explicitly, we use $U_{\sigma,v} = \{s:\ \chi_{\sigma,v}(s) =1\}$, where, for $v\in V_{\epsilon}$, $\sigma\in \{-,+\}$,
	\begin{equation*}
		\chi_{\sigma,v}(S) = \mathbf{1}_{\varphi(S,\tau_1) \in D_1(\epsilon,v)} \mathbf{1}_{\phi_v(\varphi(S,\tau_1), \tau_2)\in \Omega_L^{\sigma}(v,\epsilon)},
	\end{equation*}
	and $\tau_1=L-1-\epsilon$ and $\varphi$ are as in Lemma~\ref{lem:plane_concentration}, while $\tau_2=\frac{\pi}{2}-2\arcsin(\epsilon)$, and $\phi_s$ are as in Lemma~\ref{lem:pole_contraction}. Finally, let $\tau_3(s),\psi_s$ be as in Lemma~\ref{lem:final_rotation}, and define 
	\[
	T_1 = \tau_1, \quad T_2 = \tau_1+\tau_2, \quad T_3 = \tau_1+\tau_2+\tau_3(\sigma  v)\,.
	\] Next, introduce the path
	\begin{equation*}
		\Psi_{\sigma,v}(S,t) = \begin{cases}
			\varphi(S,t) & \text{ if } t\leq T_1,\\
			\phi_v(\varphi(S,T_1), t-T_1) & \text{ if } t\in [T_1, T_2],\\
			\psi_{v}\big(\phi_v( \varphi(S,T_1) ,T_2-T_1), t-T_2\big)& \text{ if } t\in [T_2, T_3],
		\end{cases}
	\end{equation*}
	which is defined on $U_{\sigma,v}$, is continuous, and is piecewise differentiable in $t$ with the bound
	\begin{equation}
		\label{eq:bound_path_time_derivative}
		\norm{\dot{\Psi}_{\sigma,v}(S,t)}^2 \leq 4\pi^2 L,
	\end{equation}at every point of differentiability. Moreover, for any $L$ large enough
	\begin{equation}
		\label{eq:bound_Jacobian}
		\Big| \frac{d(f_t)_{\#}\nu_L}{d\nu_L}\Big|\leq (L\epsilon^{-1})^L,
	\end{equation}where $f_t(S) = \Psi_{\sigma,v}(S,t)$. 
	Finally, from the monotonicity of the Hamiltonian along each piece of the path,
	\begin{equation}
		\label{eq:monotonicity_Ham}
		-H^{\periodic}_L(\Psi_{\sigma,v}(S,t)) \geq -H^{\periodic}_L(S).
	\end{equation}		
	Applying Lemma \ref{lem:pathargument} with these estimates on the functions $(S,t)\mapsto \Psi_{\sigma,v}(S,t/T_3)$, and choosing $\epsilon$ such that $\Omega_L^+(e_1,\epsilon)\subseteq A$, yields the bound on the relaxation time stated in Theorem \ref{th:periodic3} in the case $N=3$.
	\qed
	
	\bibliographystyle{plain}
	
	\bibliography{ON_bibliography}
	
\end{document}